\documentclass[11pt,letterpaper]{article}

\makeatletter
\newif\ifanonymous
\@ifclasswith{article}{anonymous}{\anonymoustrue}{\anonymousfalse}
\makeatother

\usepackage[letterpaper,margin=1.0in]{geometry}
\usepackage{color,latexsym,amsmath,amssymb}
\usepackage{fancyhdr}
\usepackage{todonotes}
\usepackage{amsthm}
\usepackage{svg}
\usepackage{changepage}
\usepackage{graphicx}
\usepackage{array,booktabs,float,pdflscape}
\usepackage[utf8]{inputenc}
\usepackage[colorlinks]{hyperref}
\usepackage[nameinlink,capitalize,noabbrev]{cleveref}
\usepackage{listings}
\usepackage{xcolor}
\usepackage{tikz}
\usetikzlibrary{arrows.meta}
\usepackage[thinlines]{easytable}
\usepackage{makecell}
\usepackage{orcidlink}
\usepackage{authblk}
\usepackage{fontawesome5} 
\usepackage{algorithm}
\usepackage{algpseudocode}
\usepackage{tikz}
\usetikzlibrary{arrows.meta,decorations.pathreplacing,positioning}
\usepackage{fontawesome5}

\definecolor{codegreen}{rgb}{0,0.6,0}
\definecolor{codegray}{rgb}{0.5,0.5,0.5}
\definecolor{codepurple}{rgb}{0.58,0,0.82}
\definecolor{backcolour}{rgb}{0.95,0.95,0.95}
\definecolor{figureink}{HTML}{17324D}
\definecolor{routeorange}{HTML}{E58A3A}
\definecolor{metricteal}{HTML}{168C8C}
\definecolor{localpurple}{HTML}{8064A2}
\definecolor{resultblue}{HTML}{3B78A8}
\definecolor{figurepanel}{HTML}{E7EBEF}
\newcolumntype{V}{!{\color{black!35}\vrule width .35pt}}
\lstdefinestyle{overleafstyle}{
    backgroundcolor=\color{backcolour},
    commentstyle=\color{codegreen},
    keywordstyle=\color{magenta},
    numberstyle=\tiny\color{codegray},
    stringstyle=\color{codepurple},
    basicstyle=\ttfamily\footnotesize,
    breakatwhitespace=false,
    breaklines=true,
    captionpos=b,
    keepspaces=true,
    numbers=left,
    numbersep=5pt,
    showspaces=false,
    showstringspaces=false,
    showtabs=false,
    tabsize=2
}

\newcommand{\RR}{\mathbb{R}}

\newcommand{\defn}[1]{\textbf{#1}}
\newcommand{\con}{\text{con}}

\newcommand{\remove}[1]{}

\newtheorem{theorem}{Theorem}[section]

\newtheorem{corollary}[theorem]{Corollary}
\newtheorem{conjecture}[theorem]{Conjecture}

\newtheorem{definition}[theorem]{Definition}
\newtheorem{lemma}[theorem]{Lemma}
\newtheorem{observation}[theorem]{Observation}
\newtheorem{proposition}[theorem]{Proposition}

\title{On Eigenvalue Bounds for Bounded Genus Graphs and Minor-Free Graphs}
\ifanonymous
    \author{Anonymous author(s)}
\else
    \author{Benedikt Kolbe\footnote{Hausdorff Center for Mathematics, University of Bonn. This work was partially supported by the Lamarr Institute for Machine Learning and Artificial Intelligence.} \, \orcidlink{0009-0005-0440-4912}\, \href{mailto:bkolbe@uni-bonn.de}{\faEnvelope}, and Jack Spalding-Jamieson\,\orcidlink{0000-0002-1209-4345}\, \href{mailto:jacksj@uwaterloo.ca}{\faEnvelope}}   
\fi

\date{}

\begin{document}

\maketitle

\begin{abstract}
In this paper, we resolve a 30-year-old conjecture of Spielman and Teng
concerning the performance
of the spectral partitioning method
on graphs embeddable on an orientable surface of genus $g\ge 1$.
In particular, for such a graph $G$ with $n$ vertices and maximum degree $\Delta$,
we show that the second-smallest eigenvalue of its Laplacian matrix satisfies $\lambda_2(L_G)\lesssim\Delta\frac g n$.
We also obtain an improved eigenvalue bound for $K_h$-minor-free graphs of $\lambda_2(L_G)\lesssim\Delta\frac{h^2(\log h)^2}n$.

In fact, our results directly prove much stronger results
for reweighted eigenvalues, including higher reweighted eigenvalues.
As a consequence, we obtain bounds not just on Laplacian eigenvalues,
but also on normalized Laplacian eigenvalues and Steklov eigenvalues.
Our results for genus-$g$ graphs are optimal for all of these kinds of eigenvalues,
while our results for $K_h$-minor-free graphs are optimal up to $\log(h)$ factors.

Our techniques for genus-$g$ graphs bootstrap bounded-degree bounds of normalized eigenvalues for entire classes to bounds for reweighted eigenvalues for the same classes without the bounded-degree limitation,
while our techniques for $K_h$-minor-free graphs generalize an argument of Korhonen and Lokshtanov,
making use of the Lov\'asz local lemma.

\end{abstract}

\section{Introduction}

Spectral partitioning is a highly-practical and effective method of graph partitioning
that has seen widespread adoption~\cite{spielman2007spectral}.
One of the most interesting theoretical questions concerns its performance:
Why is it so effective?
The most well-studied way of addressing this question is to certify its performance for special graph classes.
To do this, we require eigenvalue bounds for the ``Laplacian matrix''.

\begin{definition}
Let $G=(V,E)$ be a graph with adjacency matrix $A_G$ and diagonal degree matrix $D_G$.
The \defn{Laplacian} matrix of $G$ is
$L_G=D_G-A_G$.
We write its $k$th-smallest eigenvalue as $\lambda_k(L_G)$
for $1\leq k\leq|V|$.
\end{definition}

The quantity of interest for classical spectral partitioning is $\lambda_2(L_G)$.
This quantity directly measures the success of spectral partitioning:
Smaller values imply better partitions.
Hence, to certify good performance in a special graph class, we require upper bounds on $\lambda_2(L_G)$.
Throughout this paper, we assume graphs are connected, and use the notation $\Delta(G)$ for the maximum degree of $G$.

In 1996, Spielman and Teng~\cite{spielman1996disk,spielman2007spectral}
showed that for a planar graph $G$ with $n$ vertices,
$\lambda_2(L_G)\leq\frac{8\Delta}n$,
in addition to a number of bounds for geometric classes.
Importantly, they also posed the following pair of conjectures:

\begin{conjecture}[{\cite[Conjecture~1]{spielman2007spectral}}]
\label{conj:genus}
Let $G$ be an $n$-vertex graph of orientable genus at most $g$
and maximum degree $\Delta$,
where $g\geq1$.
Then
\[
\lambda_2(L_G)\lesssim\frac{\Delta g}{n}.
\]
\end{conjecture}

\begin{conjecture}[{\cite[Conjecture~2]{spielman2007spectral}}]
\label{conj:minor}
There is an absolute constant $c>0$ such that every $n$-vertex
$K_h$-minor-free graph $G$ of maximum degree $\Delta$ satisfies
\[
\lambda_2(L_G)\lesssim\frac{\Delta h^c}{n}.
\]
\end{conjecture}

We note that
\cref{conj:genus}
would be the best-possible bound,
and
\cref{conj:minor}
would be the best-possible bound if $c=2$
(see \cref{sec:lower-bounds}).

A long series of follow-up works
has made progress towards addressing these conjectures, as well as a number of natural generalizations.

First, \cref{conj:genus} was proven for the special case of bounded-degree triangulated genus-$g$ graphs
by Kelner~\cite{%
kelner2004spectral%
,kelner2006spectral%
,kelner2006new%
}%
\footnote{Kelner claimed that his result held for non-triangulated graphs as well, but an issue was recently discovered in this step of the proof~\cite{spalding2025reweighted}.}.
\Cref{conj:minor} was later proven in its stated generality by Biswal, Lee, and Rao~\cite{biswal2008eigenvalue,biswal2010eigenvalue}
with a bound of $O\left(\frac{\Delta h^6\log h}n\right)$.
They showed a bound of $O\left(\frac{\Delta g^3}n\right)$
for genus-$g$ graphs.
A number of further improvements have followed for all three of these results%
~\cite{lee2010genus,amini2018transfer,spalding2025reweighted},
as well as bounds on higher eigenvalues and other kinds of eigenvalues%
~\cite{kelner2009higher,kelner2011metric,amini2018transfer,spalding2025reweighted,tung2025reweighted,%
lin2024upper,lin2026upper,chen2026first,zhan2026higherstekloveigenvaluesgraphs}.
However, these works left two large remaining open questions:
\begin{enumerate}
    \item Can \cref{conj:genus} be shown in full generality?
    \item Can the constant $c$ in \cref{conj:minor} be improved to its best-possible value of two?
\end{enumerate}

In our work, we provide an affirmative answer to the first question,
and make significant progress towards the second question
by proving a bound that is optimal for minor-free graphs up to a factor of $O(\log^2 h)$.
However, our results also go far beyond bounds on $\lambda_2(L_G)$.
Rather, our bounds on $\lambda_2(L_G)$ are simple corollaries of our main results,
which concern a general form of \emph{reweighted} eigenvalues.
Reweighted eigenvalues have been shown through a long line of work to give an analogous
(and often more powerful) alternative algorithm to spectral partitioning~\cite{roch2005bounding,olesker2022geometric,olesker2024geometric,kwok2022cheeger,kwok2025cheeger,jain2022dimension,spalding2025reweighted}.
The general form of reweighted eigenvalues we present here is specifically due to Kwok, Lau, and Tung~\cite{kwok2025cheeger}.

\begin{definition}
A \defn{positive probability distribution} on a finite set $X$ is a function
$\pi:X\to\RR$ satisfying $\pi(x)>0$ for every $x\in X$ and
$\sum_{x\in X}\pi(x)=1$.
\end{definition}

\begin{definition}[\cite{kwok2022cheeger,kwok2025cheeger}]
\label{def:reweighted-eigenvalues}
Let $G=(V,E)$ be a graph, and let $\pi$ be a positive probability distribution on $V$.
For $1\leq k\leq|V|$, the \defn{$k$th reweighted eigenvalue} is
\[
\begin{array}{rccr@{\:}c@{\:}ll}
\lambda_k^*(G,\pi)
&=&
\displaystyle\max_{P\in\RR_{\geq0}^{V\times V}}
&\multicolumn{4}{l}{\lambda_k(I-P)}\\
&&\text{subject to}
&\displaystyle\sum_{v\in V}P_{uv}
&{}={}&1
&\forall u\in V,\\
&&
&\pi(u)P_{uv}
&{}={}&\pi(v)P_{vu}
&\forall u,v\in V,\\
&&
&P_{uv}
&{}={}&0
&\forall uv\notin E,\ u\neq v.
\end{array}
\]

For uniform $\pi$, write $\lambda_k^*(G)$.
\end{definition}

Substituting $w_{uv}=\pi(u)P_{uv}$ gives the following equivalent form.

\begin{observation}
\label{obs:reweighted-weighted-laplacian}
Let $G=(V,E)$ be a graph,
let $\pi$ be a positive probability distribution on $V$,
and let $\Pi$ be the diagonal matrix with $\Pi_{vv}=\pi(v)$.
For $w\in\RR_{\geq0}^E$,
let $L_w$ be the weighted Laplacian with edge weights $w$.
Then
\[
\begin{array}{rccr@{\:}c@{\:}ll}
\lambda_k^*(G,\pi)
&=&
\displaystyle\max_{w\in\RR_{\geq0}^E}
&\multicolumn{4}{l}{\lambda_k\left(\Pi^{-1/2}L_w\Pi^{-1/2}\right)}\\
&&\text{subject to}
&\displaystyle\sum_{e\ni v}w_e
&{}\leq{}&\pi(v)
&\forall v\in V.
\end{array}
\]
\end{observation}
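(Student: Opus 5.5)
The plan is to exhibit an explicit bijection between the feasible sets of the two programs under which the objective values agree; the maxima then coincide. Throughout we assume $E$ contains no loops. The diagonal entries $P_{uu}$ are unconstrained in \cref{def:reweighted-eigenvalues} apart from nonnegativity, so they will serve as slack variables.

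\textbf{From $P$ to $w$.} Given a feasible $P$, set $w_{uv}=\pi(u)P_{uv}$ for each edge $uv\in E$. This is well defined on unordered edges because of the detailed-balance constraint $\pi(u)P_{uv}=\pi(v)P_{vu}$. Clearly $w\ge 0$. For each $u$, the row-sum constraint and $P_{uu}\ge 0$ give
\[
\sum_{e\ni u}w_e=\pi(u)\sum_{v\neq u}P_{uv}=\pi(u)(1-P_{uu})\le\pi(u).
\]
Here we used that $P_{uv}=0$ for non-edges, so the sum over $v\neq u$ runs only over neighbours of $u$.

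\textbf{From $w$ to $P$.} Conversely, given $w\ge 0$ with $\sum_{e\ni u}w_e\le\pi(u)$, define $P_{uv}=w_{uv}/\pi(u)$ for $uv\in E$, set $P_{uv}=0$ for non-edges $u\neq v$, and set
\[
P_{uu}=1-\frac{1}{\pi(u)}\sum_{e\ni u}w_e\ \ge 0.
\]
One checks directly that this $P$ is nonnegative, has unit row sums, satisfies detailed balance (because $w$ is symmetric), and is supported on $E$ off the diagonal. The two maps are mutually inverse, since $P_{uu}$ is forced by the row-sum constraint.

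\textbf{Matching the objectives.} This is the only substantive step. Compute $\Pi(I-P)$ entrywise for corresponding $P$ and $w$.
\begin{itemize}
\item For $u\neq v$, the entry is $-\pi(u)P_{uv}=-w_{uv}$ (taken as $0$ for non-edges).
\item On the diagonal, the entry is $\pi(u)(1-P_{uu})=\sum_{e\ni u}w_e$, the weighted degree of $u$.
\end{itemize}
Hence $\Pi(I-P)=L_w$ exactly, so $I-P=\Pi^{-1}L_w$. This matrix is similar to $\Pi^{-1/2}L_w\Pi^{-1/2}$ via conjugation by $\Pi^{1/2}$, because
\[
\Pi^{1/2}\left(\Pi^{-1}L_w\right)\Pi^{-1/2}=\Pi^{-1/2}L_w\Pi^{-1/2}.
\]
Similar matrices have the same spectrum, so in particular $\lambda_k(I-P)=\lambda_k(\Pi^{-1/2}L_w\Pi^{-1/2})$ for every $k$. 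The latter matrix is symmetric, so its eigenvalues are real and their ordering is meaningful.

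Since the correspondence is a bijection between the feasible sets that preserves the objective, the two maxima are equal. I expect no real obstacle. The only points needing care are that the diagonal of $P$ exactly absorbs the inequality slack, and that the diagonal of $\Pi(I-P)$ collapses to the weighted degree via the row-sum constraint.
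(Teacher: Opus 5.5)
Your proposal is correct and follows the paper's approach. The paper justifies the observation only by the one-line substitution $w_{uv}=\pi(u)P_{uv}$. You supply the details it leaves implicit: the diagonal entries $P_{uu}$ absorb the slack in the degree constraints, and $\Pi(I-P)=L_w$, so $I-P$ is similar to $\Pi^{-1/2}L_w\Pi^{-1/2}$.
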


Kwok, Lau, and Tung showed
that these quantities relate to very broad classes of weighted graph partitioning problems,
including not just bisections, but also multiway partitioning~\cite{kwok2025cheeger}.
Just like the Laplacian eigenvalues,
bounds for special graph classes also imply interesting results related to graph partitioning, particularly involving vertex separators~\cite{tung2025reweighted,spalding2025reweighted}.
Consequently, improvements for the best-known bounds on these eigenvalues
imply improved guarantees to the performance of existing algorithms.

Our main results are the following bounds:

\begin{theorem}
\label{thm:main-genus}
Let $G=(V,E)$ be an $n$-vertex graph of orientable genus at most $g$.
Every positive probability distribution $\pi$ on $V$ satisfies
$\lambda_k^*(G,\pi)\lesssim(g+k)\|\pi\|_\infty$
for $2\leq k\leq n$.
\end{theorem}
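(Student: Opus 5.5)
Our plan follows the bootstrapping strategy from the abstract. We first pass to the dual (min–max) characterization of reweighted eigenvalues. By the Kwok--Lau--Tung duality for \cref{def:reweighted-eigenvalues}, an upper bound $\lambda_k^*(G,\pi)\le \mu$ follows once we exhibit a suitable certificate. This certificate is an embedding $f:V\to\RR^d$ (or a $k$-dimensional family of test functions) and vertex "lengths" $\ell:V\to\RR_{\ge0}$ with $\sum_v \pi(v)\ell(v)^2\le 1$. The embedding must satisfy $\|f(u)-f(v)\|\le \ell(u)+\ell(v)$ on edges. It must also be spread out: any $k$ of the test functions must have $\pi$-weighted variance at least $1/\mu$.

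The argument then proceeds in four steps.

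\textbf{Step 1: a degree-independent normalized statement.} Fix a graph $G$ of genus at most $g$ and a distribution $\pi$. Blow each vertex $v$ up into a gadget of size proportional to $\pi(v)/\|\pi\|_\infty$, say $m_v=\lceil N\pi(v)\rceil$ for a large integer $N$. For the gadget we use a bounded-degree planar piece, such as a grid patch or a triangulated disk, attached along the rotation system. This keeps the genus at most $g$, and by splitting high-degree vertices into paths the new graph $H$ has maximum degree $O(1)$.

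\textbf{Step 2: the bounded-degree input.} Apply to $H$ the known bounded-degree result for normalized/Laplacian eigenvalues of genus-$g$ graphs, namely $\lambda_k(L_H)\lesssim (g+k)/|V(H)|$. For $k=2$ this is Kelner's theorem in the triangulated case. In general we take it from the earlier literature for bounded-degree graphs, where triangulating is harmless because degree only grows by a constant. This gives $k$ orthogonal test functions on $H$ with small Dirichlet energy.

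\textbf{Step 3: transferring the test functions back to $G$.} Push the test functions down to $G$. Let $f(v)$ be the average over the gadget of $v$, and let $\ell(v)$ be the square root of the gadget energy, suitably normalized by $\pi(v)$. Here we use that total gadget sizes are $\approx N$ and that $N\pi(v)\le N\|\pi\|_\infty$. This should turn the bound $(g+k)/|V(H)|$ into $(g+k)\|\pi\|_\infty$.

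\textbf{Main obstacle.} We expect the hardest part to be controlling the loss in the averaging step of Step 3. A test function could oscillate inside a gadget, so its average would fail to preserve variance. To handle this we would use a Poincaré inequality on each gadget. The internal variance of a gadget is bounded by its diameter squared times its energy, which is too weak for large gadgets. So we would instead make gadgets expanders with bounded degree arranged planarly, or rather pick the gadget to be a star-like tree of grids. Alternatively, we could apply the spreading/padded-partition argument of Biswal--Lee--Rao style on the lengths directly. If internal variance is large, the function already concentrates, and we can cap the contribution using orthogonality among the $k$ functions. Finally, we reduce to the case $N\pi(v)\ge 1$ for all $v$ by taking $N\to\infty$.
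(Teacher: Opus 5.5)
\paragraph{Gap: the gadget graph never sees the edge weighting.}
Your gadget graph $H$ in Step~1 is built from $(G,\pi)$ alone. But bounding $\lambda_k^*(G,\pi)$ means controlling $\sum_{uv\in E}w_{uv}(x_u-x_v)^2$ for \emph{every} admissible $w$, including weightings that put almost all of $\pi(v)$ on a single edge at $v$.

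If each edge $uv$ of $G$ becomes one (or $O(1)$) edges $ab$ of $H$, then the split $|x_u-x_v|\le|x_u-F(a)|+|F(a)-F(b)|+|F(b)-x_v|$ charges $w_{uv}$ against the deviation of a \emph{single} port from its gadget mean. You would therefore need $\pi(u)\max_{z\in T_u}(F(z)-x_u)^2\lesssim\|\pi\|_\infty\cdot(\text{energy of }F\text{ on }T_u)$. That is, you need $O(1)$ point-to-mean effective resistance in a bounded-degree planar gadget of unbounded size.

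Your candidate remedies do not supply this:
\begin{itemize}
\item Grids and binary trees only give $\Theta(\log s)$. Since a gadget needs $\gtrsim\deg_G(v)$ ports, this route loses at least a $\log\Delta$-type factor.
\item Bounded-degree planar expanders do not exist: Spielman--Teng gives $\lambda_2=O(1/s)$.
\item A Biswal--Lee--Rao padded-decomposition step brings back polylogarithmic losses.
\end{itemize}
The same problem breaks your dual certificate: with $\ell(v)^2$ a normalized gadget energy, $\|f(u)-f(v)\|\le\ell(u)+\ell(v)$ does not follow. Moreover, for $k\ge3$ no dual of that form exists, since $\lambda_k$ is not concave in $w$. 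You must argue in the primal: fix an admissible $w$ and exhibit a $k$-dimensional test subspace.

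\paragraph{The missing idea: let $H$ depend on $w$.}
This is the heart of the paper's degree-reduction proposition. For positive rational $\pi,w$, pick $N$ so that $s_v=N\pi(v)$ and $m_e=Nw_e$ are integers. Replace each vertex $v$ by a plane binary tree with $s_v$ leaves. Replace each edge $e=uv$ by a bundle of $m_e$ noncrossing edges between disjoint consecutive leaf blocks of $T_u$ and $T_v$.

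Admissibility $\sum_{e\ni v}w_e\le\pi(v)$ is then exactly the leaf capacity $\sum_{e\ni v}m_e\le s_v$, so each leaf is used at most once. Hence
\[
m_e(x_u-x_v)^2\le3\sum_i\bigl[(x_u-f(a_i))^2+(f(a_i)-f(b_i))^2+(f(b_i)-x_v)^2\bigr]
\]
sums to the total within-gadget \emph{variance} plus $H$-energy, not to a maximum.

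\paragraph{Why your stated obstacle disappears.}
A Poincar\'e constant linear in gadget size is exactly enough: $\sum_{z\in T_v}(f(z)-x_v)^2\le4|V(T_v)|\cdot(\text{tree energy})$ with $|V(T_v)|<2N\|\pi\|_\infty$. This constant multiplies the factor $A/|V(H)|\le A/N$ from the bounded-degree input, giving
\[
\sum w_{uv}(x_u-x_v)^2\le27\|\pi\|_\infty\sum_{ab\in E(H)}(f(a)-f(b))^2 .
\]
The same inequality keeps the averaging map injective on the bottom-$k$ eigenspace once $A\|\pi\|_\infty<1/16$; otherwise use $\lambda_k^*\le2$.

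The gadget must also carry all $s_v$ ports on its outer face, in prescribed cyclic order, with bounded degree. Grids cannot do this, having only $O(\sqrt s)$ boundary vertices. Balanced binary trees can, and their Poincar\'e bound follows from weighted Cauchy--Schwarz together with $2^d|V(T_a)|\le4|V(T)|$.

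\paragraph{Step~2.}
Cite Amini--Cohen-Steiner directly: $\lambda_k(\mathcal L_H)\lesssim\Delta(H)(g+k)/|V(H)|$, so $\lambda_k(L_H)\lesssim(g+k)/|V(H)|$ when $\Delta(H)\le3$. Do not pass through triangulations; that is exactly the step where Kelner's extension beyond triangulated graphs was found to be flawed.
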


\begin{theorem}
\label{thm:main-minor}
Let $G=(V,E)$ be an $n$-vertex $K_h$-minor-free graph for $h\geq3$.
Every positive probability distribution $\pi$ on $V$ satisfies
\[
\begin{aligned}
\lambda_2^*(G,\pi)
&\lesssim
h^2\log^2h\,\|\pi\|_\infty,\\
\lambda_k^*(G,\pi)
&\lesssim
h^2k\log^3h\,\|\pi\|_\infty
\qquad (2\leq k\leq n).
\end{aligned}
\]
\end{theorem}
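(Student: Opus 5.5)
The plan is to bound the reweighted eigenvalues through their dual characterization as spreading metrics, and then construct suitable metrics via a randomized shortest-path / padded-decomposition argument of the kind used by Korhonen and Lokshtanov, with the Lov\'asz local lemma controlling bad events.

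\emph{Step 1: reduce to metric construction.} The max over $w$ in \cref{obs:reweighted-weighted-laplacian} is a concave program: by Courant--Fischer, $\lambda_k$ of $\Pi^{-1/2}L_w\Pi^{-1/2}$ is controlled by a $k$-dimensional test subspace. I would therefore use the dual (as in Kwok--Lau--Tung and Spalding-Jamieson) and write it as a vertex-weighted spreading problem. Roughly, $\lambda_k^*(G,\pi)\lesssim \|\pi\|_\infty\cdot\Lambda$ as soon as we can exhibit vertex lengths $\ell\colon V\to\RR_{\ge0}$ with $\sum_v \ell(v)^2\le 1$ whose induced shortest-path metric $d_\ell$ ``$k$-spreads'' $\pi$. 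Here $k$-spreading means that every set of $\pi$-mass at least about $1/k$ has large average pairwise $d_\ell$-distance, of order $1/\sqrt{\Lambda n'}$ after normalization. For $k\ge2$ I would instead build $k$ disjointly supported test functions with small Rayleigh quotient, uniformly over all $w$. The cleanest route is to show that any $w$-weighting yields a good partition into $k$ pieces by a vertex-weighted padded decomposition of $d_\ell$. Each piece then carries a truncated-distance test function, and the standard higher-order Cheeger-type construction gives the factor $k$ beyond $\lambda_2$.

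\emph{Step 2: the $\lambda_2$ bound.} I would prove that every $K_h$-minor-free graph with weights $\pi$ admits a length function whose $\ell_2$ ``flow-crossing'' (congestion) is at most $O(h\log h)$. This is the vertex analogue of the Biswal--Lee--Rao duality between spreading metrics and all-pairs multicommodity flows. To do so, consider a hypothetical optimal flow $\mu$ routing $\pi\otimes\pi$ demand with high $\ell_2$ congestion. Sample about $h$ flow paths at random according to $\mu$. If congestion exceeds $Ch\log h$, then with positive probability we get $h$ paths that pairwise intersect, while each path avoids the ``branch'' portions of the others. Building these into branch sets yields a $K_h$ minor. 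The independence needed comes from the Lov\'asz local lemma applied to events ``path $i$ meets path $j$ only near its endpoints''. Squaring the congestion bound gives the $h^2\log^2h$ factor.

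\emph{Step 3: extensions.} For higher $k$ I would lose an extra $\log h$ from the padded decomposition of minor-free metrics, whose padding parameter is $O(\log h)$ in the Abraham et al. sense. This gives $h^2k\log^3h$. For \cref{thm:main-genus} I would bootstrap from known bounded-degree normalized-eigenvalue bounds for the genus-$g$ class. Replace each vertex $v$ by a bounded-degree gadget (a planar grid or cycle of size proportional to $\pi(v)/\min\pi$) embedded in the same surface, so the genus is preserved. Apply the known bound $\lesssim (g+k)/N$ to the blown-up graph, then contract back, using that contraction only decreases the relevant Rayleigh quotients up to constants. This yields $(g+k)\|\pi\|_\infty$. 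The main obstacle is Step 2's local-lemma path-sampling argument: the dependency among intersection events must be bounded in terms of congestion, not $n$. I would first try conditioning on path lengths via dyadic bucketing, which is where the $\log h$ factors arise.
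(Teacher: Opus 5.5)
Your overall architecture (flow--spread duality, padded decompositions, a local-lemma argument in the spirit of Korhonen--Lokshtanov) is in the right family. However, Step 2, the heart of the proof, runs in the wrong direction and misses the key gadget. Since $\con_2(G)=\overline s_2(G)$ (\cref{lem:congestion-spread-duality}), you must lower-bound the \emph{minimum} $\ell_2$-congestion, $\con_2(G)\gtrsim n^2/h$. The contradiction hypothesis is therefore a unit $K_V$-flow $F$ with \emph{small} congestion. What small congestion buys is that independently sampled flow paths rarely meet: a vertex $v$ lies on a path sampled between two uniform endpoints with probability $q(v)=(2c_F(v)+1)/n^2$, so two independent such paths meet with probability at most $\|q\|_2^2$. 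You instead derive many pairwise intersecting paths from \emph{large} congestion. Pairwise intersections cannot certify a clique minor: in a star every routing path passes through the centre, yet there is no $K_3$ minor. The missing idea is \cref{lem:clique-almost-embedding}: a graph $H$ of maximum degree $3$ with at most $3h^2$ edges such that any almost-embedding of $H$ (images of nonincident edges vertex-disjoint) forces a $K_h$ minor. Send each vertex of $H$ to a uniform random vertex and each edge to a path drawn from the flow decomposition. If $\con_2(F)<\varepsilon n^2/h$, each bad event ``images of two nonincident edges meet'' has probability at most $(2\varepsilon/h+n^{-3/2})^2$. Because $H$ has bounded degree, each such event depends on at most $12|E(H)|=O(h^2)$ others, so the symmetric local lemma applies once $n\gtrsim h^2$; smaller $n$ follows from the trivial bound $\con_2(F)\ge\sqrt n(n-1)$. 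The dependency degree is governed by $H$, not by $n$ or the congestion, so no dyadic bucketing is needed and no logarithm is lost. Bounded degree is essential: routing $K_h$ directly between $h$ random centres gives dependency degree $\Theta(h^3)$, and the same calculation would only yield $n^2/h^{3/2}$.

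Your logarithm bookkeeping also does not close. In the paper the congestion bound is log-free. For $k=2$, all of the $\log^2h$ comes from the metric-to-spectral step \cref{lem:spread-to-reweighted}, $\lambda_2^*(G)\lesssim n^3\log^2h/\overline s_2(G)^2$. That step relies on the recent $O(\log h)$-padded decompositions of \cite{conroy2025protect}; Abraham et al.\ give only $O(h)$ padding. The padding parameter enters squared. Your plan applies a padded decomposition for every $k\ge2$ on top of a squared $O(h\log h)$ congestion bound, which would give $h^2\log^4h$. For $k\ge3$ you then count $O(\log h)$ padding as a single extra $\log h$, which is inconsistent. In the paper the $k\ge3$ case does not use the new congestion bound at all. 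It is trivial when $\|\pi\|_\infty>1/k$ (as $\lambda_k^*\le2$), and otherwise it is Tung's generalized padded-decomposition framework with the improved decompositions substituted. Finally, the paper does not run a $\pi$-weighted congestion argument, which would require redoing every step of the framework in weighted form, as Tung did. Instead it reduces to uniform weights via \cref{lem:vertex-splitting}. Attaching $N\widehat\pi(u)-1$ leaves to each $u$ preserves $K_h$-minor-freeness, and comparing Rayleigh quotients through cluster averages gives $\lambda_k^*(G,\pi)\lesssim N\|\pi\|_\infty\lambda_k^*(G')$. Your paragraph on genus graphs is not needed for this statement.
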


The bounded genus bound of
\cref{thm:main-genus} is optimal up to constant factors,
while the two minor-free bounds of
\cref{thm:main-minor}
are optimal up to factors of
$O(\log^2h)$ and $O(\log^3h)$, respectively
(see \cref{sec:lower-bounds}).

A full history of eigenvalue bounds for special graph classes can be found in
\cref{tab:history}.

We also note an immediate algorithmic consequence of our results, due to Spalding-Jamieson~\cite{spalding2025reweighted}.
A \defn{$\frac23$-balanced vertex separator} of a graph $G=(V,E)$ with $n$ vertices
is a subset of vertices $S\subset V$ so that $G-S$ has no connected component of size greater than $\frac23n$.
The combination of
\cref{thm:main-genus,thm:main-minor} with \cite[Theorem 1.4]{spalding2025reweighted}
and \cite[Theorem 2]{conroy2025protect} gives the following.

\begin{corollary}
Given a graph $G=(V,E)$ with $n$ vertices and maximum degree $\Delta$,
there is a polynomial time algorithm that
computes a balanced vertex separator $S$ of $G$ such that:
\begin{itemize}
    \item If $G$ has genus at most $g\ge 2$, then $|S|\lesssim\min\{\sqrt{\log\Delta},\log g\}\cdot\sqrt{gn}$.
    \item If $G$ is $K_h$-minor-free, then $|S|\lesssim\min\{\sqrt{\log\Delta},\log h\}\cdot h\log h\cdot\sqrt n$.
\end{itemize}
\end{corollary}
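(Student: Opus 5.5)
The corollary follows by combining the reweighted-eigenvalue bounds with two black-box results, so the plan is to verify the hypotheses and compose the guarantees.

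\textbf{Step 1: instantiate with the uniform distribution.} Take $\pi$ uniform, so $\|\pi\|_\infty=1/n$.
\begin{itemize}
\item \cref{thm:main-genus} with $k=2$ gives $\lambda_2^*(G)\lesssim g/n$ for genus-$g$ graphs (note $g+2\lesssim g$ since $g\ge 2$).
\item \cref{thm:main-minor} gives $\lambda_2^*(G)\lesssim h^2\log^2 h/n$ for $K_h$-minor-free graphs.
\end{itemize}

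\textbf{Step 2: the $\sqrt{\log\Delta}$ branch.} Invoke \cite[Theorem 1.4]{spalding2025reweighted}. As I recall it, this is an SDP-rounding algorithm that outputs a $\frac23$-balanced vertex separator of size $O(\sqrt{\log\Delta}\cdot n\sqrt{\lambda_2^*(G)})$. The key point is that it is polynomial time and never needs to know $g$ or $h$: the certificate is the reweighted eigenvalue itself. Substituting the bounds from Step 1:
\begin{itemize}
\item genus case: $n\sqrt{g/n}=\sqrt{gn}$;
\item minor-free case: $n\sqrt{h^2\log^2h/n}=h\log h\sqrt n$.
\end{itemize}

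\textbf{Step 3: the $\log g$ and $\log h$ branch.} Use \cite[Theorem 2]{conroy2025protect}. I expect it to give a polynomial-time algorithm that approximates the minimum balanced vertex separator within an $O(\log \mathrm{OPT})$-type factor in these classes, or one with an $O(\log g)$ or $O(\log h)$ factor. Its input is existence of a small separator. The existence bounds come from Step 1 combined with the structural direction of \cite{spalding2025reweighted}: small $\lambda_2^*$ implies a separator of size $O(n\sqrt{\lambda_2^*})$, which gives $O(\sqrt{gn})$ and $O(h\log h\sqrt n)$. (Classical separator theorems would also supply the genus bound.) Feeding these existence bounds into the approximation algorithm yields separators that are larger by only $\log g$ or $\log h$.

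\textbf{Step 4: combine.} Run both algorithms and return the smaller separator. This realizes the minimum of the two factors, still in polynomial time, and again without knowing $g$ or $h$.

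\textbf{Main obstacle.} The delicate part is matching the exact hypotheses and logarithmic factors of the cited theorems: whether Theorem 2 of \cite{conroy2025protect} yields $\log g$ or $\log h$ rather than $\log n$, and whether the balance notions coincide. The dependence on $\Delta$ appears only in the first branch. I would first check the precise statement of \cite{conroy2025protect} and adjust the balance constant, for example by repeatedly applying the separator to the largest remaining component to convert a weaker balance guarantee into $\frac23$.
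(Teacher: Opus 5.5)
\textbf{Gap in Step 3.} Your Step 2 matches the paper's argument: the corollary comes from feeding the class-wide bounds of \cref{thm:main-genus,thm:main-minor} into \cite[Theorem 1.4]{spalding2025reweighted}. Step 3, however, rests on a misidentified black box and does not establish the $\log g$/$\log h$ branch. \cite[Theorem 2]{conroy2025protect} is not an approximation algorithm for balanced vertex separators. It is the padded-decomposition theorem, giving padding parameter $\beta=O(\log h)$ for $K_h$-minor-free graphs; this is how the paper uses it in \cref{lem:spread-to-reweighted} and in the outline for \cref{prop:minor-higher-eigenvalues}. Since genus-$g$ graphs are $K_{O(\sqrt g)}$-minor-free, it also gives $\beta=O(\log g)$ for them. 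As far as the corollary's form and the paper's citations indicate, the factor $\min\{\sqrt{\log\Delta},\beta\}$ is the loss in the rounding step of Spalding-Jamieson's theorem. That step turns the reweighted-eigenvalue solution into a sparse vertex cut, either by the generic $\sqrt{\log\Delta}$ reduction or by a padded decomposition. This is the same mechanism by which $\beta^2=\log^2h$ enters \cref{lem:spread-to-reweighted}. So both branches come from one application of Theorem 1.4, with $\beta$ supplied by Conroy and Filtser, rather than from two separate algorithms.

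\textbf{Why your alternative route is incomplete.} It needs a lossless ``structural direction'': that small $\lambda_2^*$ yields a separator of size $O(n\sqrt{\lambda_2^*})$. Nothing cited provides this. The passage from $\lambda_2^*$ to a vertex cut is exactly where $\min\{\sqrt{\log\Delta},\beta\}$ is paid, and the $\sqrt{\log\Delta}$ in the reweighted Cheeger inequality of \cite{kwok2022cheeger} cannot be removed for general graphs. Classical separator theorems would give existence bounds. You would then still need a polynomial-time $O(\log g)$- or $O(\log h)$-approximation for balanced vertex separators, which you have neither correctly cited nor constructed.

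\textbf{A smaller issue in Step 2.} A guarantee of the form $O(\sqrt{\log\Delta}\,n\sqrt{\lambda_2^*(G)})$ in terms of $G$ alone is false. Take a bounded-degree expander on $m$ vertices with a pendant path of $\sqrt m$ vertices. Testing the span of $\mathbf 1$ and the indicator of the far half of the path gives $\lambda_2^*\lesssim m^{-1/2}$. Yet every $\frac23$-balanced separator has $\Omega(m)$ vertices. The rounding yields only a sparse cut, and converting it into a balanced separator requires recursing on the large remaining component, which needs the eigenvalue bound for every subgraph. That holds here because both classes are subgraph-closed and the paper's bounds are class-wide, but your argument should state it.
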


\subsection{Other Eigenvalues and Relationships}
\label{subsec:other-eigen}

There are a few other notions of eigenvalues of interest,
including the original Laplacian eigenvalue.
Notably, our bounds on reweighted eigenvalues
imply bounds for all of these other notions,
all of which are optimal for genus-$g$ graphs.

\begin{definition}
Let $G=(V,E)$ be a graph with no isolated vertices.
The \defn{normalized Laplacian} of $G$ is
$\mathcal L_G:=D_G^{-1/2}L_GD_G^{-1/2}$.
We write its $k$th eigenvalue as $\lambda_k(\mathcal L_G)$
for $1\leq k\leq|V|$.
\end{definition}

\begin{definition}[\cite{zhan2026higherstekloveigenvaluesgraphs}]
\label{def:steklov-eigenvalues}
Let $G=(V,E)$ be a connected graph,
and let $B\subset V$ be nonempty.
A real number $\sigma$ is a \defn{Steklov eigenvalue} of $(G,B)$ if there is a
nonzero function $f:V\to\RR$ such that
\[
\sum_{\substack{v\in V\\uv\in E}}(f(u)-f(v))
=
\begin{cases}
0, & u\in V\setminus B,\\
\sigma f(u), & u\in B.
\end{cases}
\]
There are $|B|$ Steklov eigenvalues (with multiplicity), which we write as
\[
0=\sigma_1(G,B)\leq\cdots\leq\sigma_{|B|}(G,B).
\]
\end{definition}

Our main results transfer as follows.

\begin{theorem}
\label{thm:other-eigenvalue-transfer}
Fix $k\geq2$ and $A>0$.
Let $\mathcal G$ be either the family of graphs of orientable genus at most $g$
or the family of $K_h$-minor-free graphs for some $h\geq3$.
Suppose every $G\in\mathcal G$ satisfies
\[
\lambda_k^*(G)
\lesssim
\frac{A}{|V(G)|}.
\]
Then every $n$-vertex graph $G\in\mathcal G$ with maximum degree $\Delta$ satisfies
\[
\lambda_k(\mathcal L_G)
\leq
\lambda_k(L_G)
\lesssim
\frac{A\Delta}{n}.
\]
Moreover,
for a graph $G\in\mathcal G$ with maximum degree $\Delta$, every $B\subset V(G)$ with $|B|\geq k$ satisfies
\[
\sigma_k(G,B)
\lesssim
\frac{A\Delta(G)}{|B|}.
\]
\end{theorem}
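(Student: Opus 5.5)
The argument has three parts. Part one reduces the Laplacian bound to the reweighted one by exhibiting a single feasible weighting. Part two gets the normalized Laplacian by a Rayleigh-quotient comparison. Part three gets the Steklov bound by passing to a minor on roughly $|B|$ vertices, which stays in $\mathcal G$ because both families are minor-closed. Parts one and two are routine; part three is incomplete, for the reason given below.

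\emph{Laplacian.} Let $G\in\mathcal G$ have $n$ vertices and maximum degree $\Delta$, and take $\pi$ uniform, so $\Pi=\frac1n I$. Put $w_e=\frac{1}{n\Delta}$ on every edge. Then $\sum_{e\ni v}w_e=\frac{\deg v}{n\Delta}\le\frac1n=\pi(v)$, so $w$ is feasible in \cref{obs:reweighted-weighted-laplacian}. Moreover $\Pi^{-1/2}L_w\Pi^{-1/2}=n\cdot\frac{1}{n\Delta}L_G=\frac1\Delta L_G$. The observation and the hypothesis then give
\[
\frac{\lambda_k(L_G)}{\Delta}\le\lambda_k^*(G)\lesssim\frac An .
\]

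\emph{Normalized Laplacian.} Connectivity gives $\deg v\ge1$, hence $f^\top D_Gf\ge f^\top f$ for every $f$. Substituting $g=D_G^{1/2}f$, the Rayleigh quotient of $\mathcal L_G$ becomes $\frac{f^\top L_Gf}{f^\top D_Gf}\le\frac{f^\top L_Gf}{f^\top f}$. The map $f\mapsto D_G^{1/2}f$ is a bijection on $k$-dimensional subspaces, so Courant--Fischer yields $\lambda_k(\mathcal L_G)\le\lambda_k(L_G)$.

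\emph{Steklov, reduction.} Let $m=|B|\ge k$. I would use the variational characterization
\[
\sigma_k(G,B)=\min_{\dim U=k}\ \max_{f\in U\setminus\{0\}}\frac{f^\top L_Gf}{\sum_{b\in B}f(b)^2},
\]
where $U$ ranges over $k$-dimensional spaces of functions on $V$ that are nonzero on $B$. The harmonic extension minimizes the energy for given boundary data, so any test functions may be used in place of harmonic ones.
\begin{enumerate}
\item Partition $V$ into connected cells $C_b$, $b\in B$, with $b\in C_b$, by assigning every vertex to a nearest vertex of $B$ and breaking ties along a shortest-path forest.
\item Contract each cell to obtain a minor $H$ of $G$ on vertex set $B$. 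Then $H\in\mathcal G$.
\item Let $c_{ij}$ be the number of $G$-edges between $C_i$ and $C_j$, and let $M=\max_i\sum_jc_{ij}$.
\item Apply the hypothesis to $H$ with the feasible weights $w_{ij}=\frac{c_{ij}}{mM}$. The same computation as in the Laplacian step gives $\lambda_k(L_c)\lesssim\frac{AM}{m}$, where $L_c$ is the weighted Laplacian of $H$ with edge weights $c_{ij}$.
\item Lift each eigenvector of $L_c$ to $G$ by making it constant on cells. The lift has energy $f^\top L_cf$ and boundary mass $\sum_if_i^2$.
\end{enumerate}
This gives $\sigma_k(G,B)\le\lambda_k(L_c)\lesssim AM/m$.

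\emph{Steklov, the gap.} The remaining task is to show $M\lesssim\Delta(G)$, and this is where I expect the difficulty. The nearest-point cells can have boundary as large as $\Delta|C_b|$, so the reduction does not yet prove the claimed bound. To repair it, I would first try to choose the cells more carefully, balancing them so that each cell's outgoing edge count is charged to vertices of $B$. For example, attach each non-$B$ vertex to only one neighbouring cell, and make the cut edges of a cell lie mostly at its $B$-vertex. If that fails, I would replace contraction by a fractional transport. This spreads each non-$B$ vertex's edges over several $B$-vertices so that the induced weighted graph on $B$ has weighted degree $O(\Delta)$. It still needs to be realized as a minor, or as a convex combination of minors, so that the hypothesis applies.
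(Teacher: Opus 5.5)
Your Laplacian and normalized-Laplacian steps are correct and coincide with the paper's argument: the weighting $w_e=1/(n\Delta)$, and the substitution $x=D_G^{-1/2}z$ in Courant--Fischer. The Steklov part has a genuine gap. Choosing the cells more cleverly cannot repair it, because contracting onto $B$ is the wrong move.

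The needed bound $M\lesssim\Delta$ fails for every admissible partition. Take the $\sqrt n\times\sqrt n$ grid with $|B|=m$. Edge-isoperimetry ($|\partial S|\gtrsim\sqrt{|S|}$ for $|S|\le n/2$) gives $M\gtrsim\min\{\sqrt{n/m},\sqrt{m-1}\}$. Either all cells have at most $n/2$ vertices and one has at least $n/m$, or one cell has more than $n/2$ vertices and its complement contains the other $m-1$ points of $B$. So for $1\ll m\ll n$ your bound $AM/m$ loses an unbounded factor.

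The lifted test functions are themselves poor. For evenly spaced $B$ with square Voronoi cells, your contracted graph is an $\sqrt m\times\sqrt m$ grid with all weights $\sqrt{n/m}$. Hence $\lambda_2(L_c)\approx\pi^2\sqrt{n/m}/m$, while the horizontal coordinate (centered on $B$) shows $\sigma_2(G,B)\lesssim 1/m$. A cell-wise constant function puts all of its energy on the cut edges instead of spreading its gradient through the cell interiors. Your fractional-transport fallback does not yet say which graph in $\mathcal G$ the hypothesis would be applied to, so it does not close the gap.

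The missing idea is to enlarge the graph rather than contract it. The paper attaches $r=\Delta n^2$ pendant leaves to every vertex of $B$. The resulting graph $G_r$ has $N=n+r|B|$ vertices and stays in $\mathcal G$. This uses closure under attaching leaves, not under minors; for $K_h$-minor-free graphs it holds because $K_h$ has minimum degree at least two. Weighting original edges by $1/(2\Delta N)$ and leaf edges by $1/(2rN)$ is feasible for the uniform measure without any degree blow-up. The leaves make the uniform mass on $G_r$ behave like $r\sum_{b\in B}X_b^2$, i.e.\ like the Steklov denominator.

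Now restrict to the codimension-$(k-1)$ subspace of $X\in\RR^{V(G_r)}$ whose boundary values $y=X_B$ are orthogonal to the first $k-1$ eigenvectors of the Schur complement $\Lambda_{G,B}$. Split $X_{V(G)}=Y+Z$ with $Y$ the harmonic extension of $y$ and $Z_B=0$. Energy orthogonality gives $X_{V(G)}^\top L_GX_{V(G)}=y^\top\Lambda_{G,B}y+Z^\top L_GZ$. The maximum principle gives $\|Y\|^2\le n\|y\|^2$, and paths to $B$ give $\|Z\|^2\le n^2Z^\top L_GZ$. The leaves contribute at most $2r\|y\|^2$ plus twice their edge energy to $\|X\|^2$.

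Using $\sigma_k(G,B)\le2\Delta$, this yields $X^\top NL_wX\ge\frac{\sigma_k(G,B)}{8\Delta r}\|X\|^2$ on that subspace. The max--min form of Courant--Fischer and the hypothesis on $G_r$ then give $\sigma_k(G,B)\le8\Delta r\,\lambda_k(NL_w)\lesssim\frac{A\Delta r}{n+r|B|}\le\frac{A\Delta}{|B|}$. No explicit test functions on $G$ are needed. The hypothesis on $G_r$ supplies the subspace, and the Schur complement accounts for the optimal interior extension that your cell-wise constant lift throws away.
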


We get the following bounds as a result.

\begin{corollary}
\label{cor:genus-other-eigen-bounds}
Let $G$ be an $n$-vertex graph of orientable genus at most $g$ with maximum
degree $\Delta$.
For every $1\leq k\leq n$,
\[
\lambda_k(\mathcal L_G)
\leq
\lambda_k(L_G)
\lesssim
\frac{\Delta(g+k)}{n}.
\]
For every $B\subset V(G)$ with $|B|\geq2$ and every $2\leq k\leq|B|$,
\[
\sigma_k(G,B)
\lesssim
\frac{\Delta(g+k)}{|B|}.
\]
\end{corollary}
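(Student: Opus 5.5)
This follows by combining \cref{thm:main-genus} with \cref{thm:other-eigenvalue-transfer}. The one subtlety is the range of $k$: the transfer theorem needs $k\geq2$, and for the Steklov part it needs $|B|\geq k$.

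For the first claim, fix $2\leq k\leq n$. Apply \cref{thm:main-genus} with the uniform distribution $\pi\equiv 1/n$, so that $\|\pi\|_\infty=1/n$. This gives $\lambda_k^*(G)\lesssim (g+k)/n$ for every graph $G$ of orientable genus at most $g$. Hence the hypothesis of \cref{thm:other-eigenvalue-transfer} holds for the family of genus-$\le g$ graphs with $A=g+k$. The implicit constant is absolute and $A$ depends only on $g$ and $k$, not on $G$, as the hypothesis requires. The transfer theorem then yields
\[
\lambda_k(\mathcal L_G)\leq\lambda_k(L_G)\lesssim \frac{\Delta(g+k)}{n}.
\]

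The case $k=1$ is outside the transfer theorem but is trivial. Both $L_G$ and $\mathcal L_G$ are positive semidefinite and have the constant vector (respectively $D_G^{1/2}\mathbf 1$) in their kernel. So $\lambda_1(\mathcal L_G)=\lambda_1(L_G)=0$, and the bound holds.

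For the Steklov claim, fix $B$ with $|B|\geq2$ and $2\leq k\leq|B|$. Take the same $A=g+k$. The condition $|B|\geq k$ required by \cref{thm:other-eigenvalue-transfer} holds by assumption, and the theorem gives $\sigma_k(G,B)\lesssim \Delta(g+k)/|B|$ directly.

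There is no real obstacle; the only point to check is bookkeeping. The constant $A$ in \cref{thm:other-eigenvalue-transfer} is allowed to depend on $k$ and $g$, and we take it to be $g+k$ uniformly over the whole class, which is exactly what \cref{thm:main-genus} provides.
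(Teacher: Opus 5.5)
Your proposal is correct and follows the paper's route. Specializing \cref{thm:main-genus} to uniform $\pi$ gives the degree-independent, class-wide bound $\lambda_k^*(G)\lesssim (g+k)/|V(G)|$, which you feed into \cref{thm:other-eigenvalue-transfer} with $A=g+k$ (the Laplacian part via \cref{prop:standard-reweighted-relationships}, the Steklov part via \cref{thm:reweighted-to-steklov} and \cref{lem:graph-family-closure}), and your separate treatment of $k=1$, where both eigenvalues vanish, covers the one case the transfer theorem excludes.
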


\begin{corollary}
Let $G$ be an $n$-vertex $K_h$-minor-free graph for $h\geq3$ with maximum
degree $\Delta$.
Then
\[
\begin{aligned}
\lambda_2(\mathcal L_G)
&\leq
\lambda_2(L_G)
\lesssim
\frac{\Delta h^2\log^2h}{n},\\
\lambda_k(\mathcal L_G)
&\leq
\lambda_k(L_G)
\lesssim
\frac{\Delta h^2k\log^3h}{n}
\qquad (3\leq k\leq n).
\end{aligned}
\]
For every $B\subset V(G)$ with $|B|\geq2$,
\[
\begin{aligned}
\sigma_2(G,B)
&\lesssim
\frac{\Delta h^2\log^2h}{|B|},\\
\sigma_k(G,B)
&\lesssim
\frac{\Delta h^2k\log^3h}{|B|}
\qquad (3\leq k\leq|B|).
\end{aligned}
\]
\end{corollary}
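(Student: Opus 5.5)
This corollary follows by feeding \cref{thm:main-minor} into \cref{thm:other-eigenvalue-transfer}. I apply the transfer theorem to the family $\mathcal G$ of $K_h$-minor-free graphs, with the uniform distribution $\pi$.

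\textbf{Step 1: the hypothesis of the transfer theorem.} For any $G'\in\mathcal G$ with $m$ vertices we have $\|\pi\|_\infty=1/m$. So \cref{thm:main-minor} gives
\[
\lambda_2^*(G')\lesssim \frac{h^2\log^2h}{m}
\qquad\text{and}\qquad
\lambda_k^*(G')\lesssim \frac{h^2k\log^3h}{m}\quad\text{for } 2\le k\le m.
\]
The implied constant is absolute. This is exactly the hypothesis of \cref{thm:other-eigenvalue-transfer}, with $A=h^2\log^2h$ when $k=2$ and $A=h^2k\log^3h$ for general $k$.

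\textbf{Step 2: Laplacian eigenvalues.} \cref{thm:other-eigenvalue-transfer} gives $\lambda_k(\mathcal L_G)\le\lambda_k(L_G)\lesssim A\Delta/n$. Substituting the two values of $A$ yields both displayed Laplacian bounds.

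\textbf{Step 3: Steklov eigenvalues.} The same theorem gives $\sigma_k(G,B)\lesssim A\Delta(G)/|B|$ whenever $|B|\ge k$. This covers $k=2$ for all $|B|\ge2$, and $3\le k\le|B|$ in general.

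\textbf{Main check: uniformity of constants.} The transfer theorem is stated for a fixed $k$ and $A$, so I must make sure its implied constant does not depend on $k$ or $A$. Here $A$ depends on $k$, and the hidden constant must not.

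I would handle this by reading \cref{thm:other-eigenvalue-transfer} as: if $\lambda_k^*(G)\le C A/|V(G)|$ for all $G\in\mathcal G$, then $\lambda_k(L_G)\le C' C A\Delta/n$ with $C'$ absolute. With that reading, the case $k=2$ and each $k\ge3$ go through separately with uniform constants. I expect this to be exactly how that theorem is proved, since its proof presumably only uses closure of $\mathcal G$ under taking subgraphs or minors. Minor-closure holds for $K_h$-minor-free graphs, so the argument applies to this family.
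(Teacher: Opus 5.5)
Your proposal is correct and follows the paper's route: specialize \cref{thm:main-minor} to the uniform $\pi$ (so $\|\pi\|_\infty=1/|V(G)|$) and feed the result into \cref{thm:other-eigenvalue-transfer}. The implied constants there are indeed absolute multiples of the hypothesis constant, since the proofs give $\lambda_k(L_G)\le\Delta\lambda_k^*(G)$ and $\sigma_k(H,B)\le 8\Delta r\,\lambda_k^*(G_r)$ with $r/N\le 1/|B|$.

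One correction to your side remark: the Steklov transfer does not use minor-closure. It uses closure under attaching leaves (\cref{lem:graph-family-closure}, which needs $h\ge3$ so that $K_h$ has minimum degree at least two), and that is not implied by minor-closure. This does not affect your argument, because you apply the transfer theorem as stated for $K_h$-minor-free graphs.
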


Notably,
\cref{cor:genus-other-eigen-bounds}
is precisely what resolves
\cref{conj:genus} in the affirmative with $k=2$.

\subsection{Formalization}

An important case of one of our main results
is formalized in Lean.\footnote{\url{https://github.com/jacketsj/eigen-formal}}
In particular, it uses the following dual form of $\lambda_2^*$.

\begin{observation}[{\cite{kwok2022cheeger,kwok2025cheeger}}]
\label{obs:lambda-2-dual}
Let $G=(V,E)$ be a graph.
Then the dual form of the second reweighted eigenvalue with
uniform vertex weights is
\[
\begin{array}{rccr@{\:}c@{\:}ll}
\lambda_2^*(G)
&=&
\displaystyle\min_{\substack{f:V\to\RR^V\\y:V\to\RR_{\geq0}}}
&\multicolumn{4}{l}{\displaystyle\sum_{v\in V}y(v)}\\
&&\text{subject to}
&\displaystyle\sum_{v\in V}\|f(v)\|^2
&{}={}&1,\\
&&
&\displaystyle\sum_{v\in V}f(v)
&{}={}&0,\\
&&
&y(u)+y(v)
&{}\geq{}&\|f(u)-f(v)\|^2
&\forall uv\in E.
\end{array}
\]
\end{observation}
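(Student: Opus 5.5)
This is SDP/LP duality applied to the primal form in \cref{obs:reweighted-weighted-laplacian}, specialized to $k=2$ and $\pi$ uniform ($\pi(v)=1/n$, $\Pi=\frac1n I$).

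\textbf{Primal as a concave program.} With $\pi$ uniform, $\lambda_2(\Pi^{-1/2}L_w\Pi^{-1/2})=n\,\lambda_2(L_w)$. The kernel of $L_w$ always contains $\mathbf 1$, so
\[
\lambda_2(L_w)=\min\{\langle L_w,X\rangle : X\succeq0,\ \operatorname{tr}X=1,\ X\mathbf 1=0\}.
\]
The map $w\mapsto\lambda_2(L_w)$ is therefore a minimum of linear functions, hence concave. The primal is then a concave maximization over the polytope $\{w\ge0:\sum_{e\ni v}w_e\le 1/n\}$.

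\textbf{Lagrangian and minimax.} I would introduce multipliers $y(v)\ge0$ for the degree constraints. Concretely, I rescale so that each constraint reads $\sum_{e\ni v}w_e\le1$ and the objective is $\lambda_2(L_w)$, and I absorb the factor $n$ into the normalization; I would check these constants against the statement at the end. The Lagrangian is
\[
\langle L_w,X\rangle+\sum_v y(v)\Bigl(1-\sum_{e\ni v}w_e\Bigr).
\]
Both feasible sets are convex and the spectrahedron is compact, so Sion's minimax theorem (or strong SDP duality, since Slater's condition holds with small positive $w$) lets me swap max and min.

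\textbf{Eliminating $w$.} Write $\langle L_w,X\rangle=\sum_{uv\in E}w_{uv}\|f(u)-f(v)\|^2$, where $X=FF^\top$ is Gram-factorized with rows $f(v)\in\RR^V$. The constraints $\operatorname{tr}X=1$ and $X\mathbf1=0$ become $\sum_v\|f(v)\|^2=1$ and $\sum_v f(v)=0$. Maximizing over $w_{uv}\ge0$ gives a finite value exactly when $\|f(u)-f(v)\|^2\le y(u)+y(v)$ for every edge, and in that case the value is $\sum_v y(v)$. This yields the displayed dual.

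\textbf{Main obstacle.} The hard part is the bookkeeping of normalization constants. The paper's convention may be that the $\pi(v)$ scaling cancels exactly, or it may absorb a factor of $n$ into the normalization of $f$, and I need to confirm which. I also need to justify attainment of the minimum, which follows from compactness after bounding $y$ by $\max$ edge lengths.
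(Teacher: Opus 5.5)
Your proposal is correct and follows the standard SDP/minimax duality derivation from the cited work of Kwok, Lau, and Tung; the paper states this observation without proof and simply cites that work. Two small clean-ups. First, the constants cancel exactly: for uniform $\pi$, $\Pi^{-1/2}L_w\Pi^{-1/2}=nL_w=L_{nw}$, so the primal is $\max\{\lambda_2(L_w): w\ge0,\ \sum_{e\ni v}w_e\le1\}$ and your dual is literally the displayed program. Second, Sion's theorem needs one compact side, and $y\ge0$ is unbounded, so swap $\max_w$ and $\min_X$ first over the compact polytope and the compact spectrahedron, and only then apply LP duality to $\max_w\sum_{uv\in E}w_{uv}\|f(u)-f(v)\|^2$ to produce $y$.
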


The formalized result is then very specifically the following.

\begin{proposition}
\label{prop:formalized-genus-solution}
Let $G=(V,E)$ be an $n$-vertex graph of orientable genus at most $g$,
where $n\geq2$.
Then there are $f:V\to\RR^V$ and $y:V\to\RR_{\geq0}$ such that
\[
\begin{aligned}
\sum_{v\in V}f(v)&=0,\\
y(u)+y(v)&\geq\|f(u)-f(v)\|^2 &&\forall uv\in E,\\
\|f(v)\|^2&=1/n &&\forall v\in V,\\
\sum_{v\in V}y(v)&\leq\frac{1008\pi(g+1)}n.
\end{aligned}
\]
\end{proposition}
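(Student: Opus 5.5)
The plan is to construct $f$ and $y$ from a circle packing of a triangulation of $G$ on a genus-$g$ Riemann surface, pushed to the round sphere by a meromorphic function of low degree. This is the Spielman--Teng/Kelner strategy, but it replaces \emph{edge} lengths by \emph{vertex} quantities $y(v)$. That substitution is exactly what the dual form in \cref{obs:lambda-2-dual} allows, and it is why no degree bound appears.

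\textbf{Packing and map to the sphere.} Fix a cellular embedding of $G$ on the orientable surface $\Sigma_g$. If $G$ is not a triangulation, add one dummy vertex inside each face and join it to the face's boundary. This only adds vertices and edges, and the constraints of the proposition are imposed only on the original edges. By the Koebe--Andreev--Thurston theorem on surfaces (He--Schramm), there is a conformal structure on $\Sigma_g$ and a packing by closed disks $\{D_v\}$ with disjoint interiors whose tangency graph is the augmented triangulation. By Riemann--Roch there is a nonconstant meromorphic function $\varphi:\Sigma_g\to\mathbb S^2$ of degree $d\le g+1$. By Riemann--Hurwitz it has at most $2g-2+2d=O(g+1)$ critical points.

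\textbf{Centering.} Place the points $p_v=\varphi(\mathrm{center}(D_v))$ for original $v\in V$ on $\mathbb S^2$, each with mass $1/n\le 1/2$. A Hersch-type Brouwer argument gives a Möbius transformation $\mu$ with $\sum_{v\in V}\mu(p_v)=0$. Define $f(v)=n^{-1/2}\,\mu(p_v)\in\RR^3\subset\RR^V$. This immediately gives $\sum_v f(v)=0$ and $\|f(v)\|^2=1/n$, and $\psi:=\mu\circ\varphi$ is still a degree-$d$ branched cover.

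\textbf{Choice of $y$.} Let $\delta_v$ be the spherical diameter of $\psi(D_v)$ and set $y(v)=2\delta_v^2/n$. For an original edge $uv$, the disks $D_u$ and $D_v$ are tangent at a point $q$. Then
\[
\|f(u)-f(v)\|\le n^{-1/2}\bigl(|\psi(c_u)-\psi(q)|+|\psi(q)-\psi(c_v)|\bigr)\le n^{-1/2}(\delta_u+\delta_v),
\]
so $\|f(u)-f(v)\|^2\le 2(\delta_u^2+\delta_v^2)/n=y(u)+y(v)$. It remains to show $\sum_v\delta_v^2\lesssim g+1$, with the explicit constant $1008\pi/2$.

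\textbf{Bad and good disks.} Call $v$ \emph{bad} if $D_v$ contains a critical point of $\psi$. The disks have disjoint interiors, so there are at most $O(g+1)$ bad vertices, each with $\delta_v\le 2$. Their total contribution is therefore $O(g+1)$. On a good disk, $\psi$ is univalent, and the images $\psi(D_v)$ cover $\mathbb S^2$ with multiplicity at most $d$. Hence $\sum_v\operatorname{area}(\psi(D_v))\le 4\pi d\le 4\pi(g+1)$.

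\textbf{Main obstacle.} The key estimate I need is $\delta_v^2\le C\cdot\operatorname{area}(\psi(D_v))$ for good $v$, with an absolute constant $C$. Koebe's distortion theorem gives this comfortably on a concentric subdisk, but not up to the boundary, and the triangle inequality above passes through the tangency point on $\partial D_v$. I would first try to avoid the boundary by passing through the center of the incircle of the interstice (the dual vertex, which is where the dummy vertices help). Alternatively, I would control $|\psi(c_u)-\psi(q)|$ by the distortion of $\psi$ on a slightly enlarged disk, using the ring lemma in the bounded-degree part. For the unbounded-degree part I would charge the cost to the conformal modulus of the annulus between $D_v$ and the union of its neighbours. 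Handling this boundary distortion uniformly, together with the spherical rather than Euclidean geometry after $\mu$, is where I expect the real work and the constant $1008\pi$ to come from.
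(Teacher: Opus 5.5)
Your setup is fine as far as it goes. Riemann--Roch gives $\deg\psi\le g+1$. Riemann--Hurwitz gives $O(g+1)$ critical points, hence $O(g+1)$ bad disks. The bound $\sum_v\operatorname{area}\psi(D_v)\le 4\pi(g+1)$ holds (counted with multiplicity). And $y(v)=2\delta_v^2/n$ does satisfy the edge constraints. For $g=0$, $\psi$ is M\"obius, the images are caps, and your argument is already complete.

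Two small repairs are needed.
\begin{itemize}
\item Hersch centering is impossible if one image point carries mass $>1/2$, and this can happen because up to $d$ disk centers may share an image. Treat $n\le 2(g+1)$ separately: there $y\equiv 2/n$ with any centered equal-norm $f$ suffices.
\item The absence of critical points in $D_v$ gives only local injectivity, not univalence.
\end{itemize}

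The genuine gap is the estimate $\sum_v\delta_v^2\lesssim g+1$. It is the whole content of the proposition, and you leave it unproved. The inequality $\delta_v^2\le C\operatorname{area}\psi(D_v)$ does not follow from univalence on $D_v$. Koebe distortion controls $\psi$ only on concentric subdisks $\rho D_v$, with constants that blow up as $\rho\to1$. But $\delta_v$ is governed by boundary points, in particular the tangency points your triangle inequality passes through. A map univalent on a disk can send it onto a long thin region, making $\operatorname{diam}^2/\operatorname{area}$ arbitrarily large.

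To reach the boundary you need univalence on a region around $D_v$ of definite conformal size. None of your three suggestions supplies this without a degree bound:
\begin{itemize}
\item Enlarging the bad set to disks whose doubles meet a critical point destroys the $O(g+1)$ count, since disks of geometrically shrinking radii can all cluster at one critical point.
\item The ring lemma requires bounded degree.
\item For a vertex of huge degree with many tiny petals, the annulus between $D_v$ and the union of its neighbours has modulus tending to $0$, so it gives no Koebe-type control.
\end{itemize}
The unbounded-degree case is exactly what the statement is about, so the proposal stops where the real difficulty begins.

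The paper never confronts this distortion. Its written route to the corresponding bound (\cref{thm:main-genus}) is class-wide bootstrapping. \cref{prop:degree-reduction} replaces each vertex by a balanced binary tree with $N\pi(v)$ ordered leaves, and each edge by $Nw_e$ non-crossing parallel edges. This gives a maximum-degree-$3$ graph on the same surface. The bounded-degree bound of Amini and Cohen-Steiner is then pulled back via tree averages, the tree Poincar\'e inequality of \cref{lem:tree}, and Courant--Fischer.

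The per-vertex normalization $\|f(v)\|^2=1/n$ and the constant $1008\pi(g+1)$ come from the Lean formalization. The paper describes that proof only as applying Riemann--Roch directly, so your overall architecture may resemble it. Without the distortion estimate, though, your argument does not establish the statement, let alone with that constant.
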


The pair $(f,y)$ is feasible for
\cref{obs:lambda-2-dual},
so we obtain the following corollary.

\begin{corollary}
\label{cor:formalized-genus}
Let $G$ be an $n$-vertex graph of orientable genus at most $g$,
where $n\geq2$.
Then
\[
\lambda_2^*(G)
\leq
\frac{1008\pi(g+1)}n.
\]
\end{corollary}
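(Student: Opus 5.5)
We have to exhibit $(f,y)$ explicitly: an embedding of $V$ onto a sphere of radius $1/\sqrt n$ with barycenter zero, plus a vertex potential $y$ of total mass $O((g+1)/n)$ dominating the squared edge lengths. The constant $1008\pi$ suggests the route: a circle-packing-type conformal representation, a Möbius normalization, and an area-counting argument. This is Kelner's strategy, repaired so that $y$ absorbs the high-degree/non-triangulated problems rather than requiring bounded degree.

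\textbf{Step 1: reduction to triangulations.} Adding edges only adds constraints $y(u)+y(v)\ge\|f(u)-f(v)\|^2$, and adding vertices will be avoided. So we first extend $G$ to a triangulation (possibly with multi-edges) of a surface of genus $g$ on the same vertex set. Any feasible $(f,y)$ for the supergraph is then feasible for $G$.

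\textbf{Step 2: geometric representation.} For the planar case ($g=0$) we use the Koebe--Andreev--Thurston circle packing on $S^2$. We apply a Möbius transformation so that the centers of mass of the caps (or of the tangency points/centers) is at the origin; this is the standard Spielman--Teng fixed-point argument. Let $x(v)\in S^2$ be the center and $r_v$ the spherical radius, and set $f(v)=x(v)/\sqrt n$, so that $\|f(v)\|^2=1/n$ and $\sum_v f(v)=0$. For adjacent $u,v$, tangency gives $\|x(u)-x(v)\|\le r_u+r_v$, hence $\|f(u)-f(v)\|^2\le 2(r_u^2+r_v^2)/n$. We take $y(v)=2r_v^2/n$. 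Since the caps are disjoint, $\sum_v r_v^2\lesssim$ the area of $S^2=4\pi$, which gives $\sum_v y(v)\lesssim \pi/n$.

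For $g\ge1$ we would instead use Kelner's construction: a branched conformal cover $S\to S^2$ of degree $O(g+1)$ (Riemann--Roch/Gabard bound), combined with a circle packing of the triangulation on $S$. Pushing the packing forward gives each vertex a region on $S^2$, and these regions cover every point of $S^2$ at most $O(g+1)$ times. We then Möbius-normalize on the sphere and define $f$ and $y$ as in the planar case. The total area is now $\le 4\pi\cdot O(g+1)$, giving $\sum y\lesssim (g+1)/n$, with the explicit constants multiplying to roughly $1008\pi$.

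\textbf{Main obstacle.} The pushed-forward packing circles are no longer round near branch points. The inequality $\|f(u)-f(v)\|^2\le y(u)+y(v)$ needs the diameter of each vertex's image region to be controlled by the square root of its area. This fails near branch points of high degree, which is exactly the gap noted in Kelner's argument.

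To handle this, we would define $y(v)$ directly as $2\,\mathrm{diam}(R_v)^2/n$, where $R_v$ is the image region, and bound $\sum_v \mathrm{diam}(R_v)^2$ rather than the areas. The plan is a conformal-modulus/length–area argument. Regions not containing a branch point are images of disks under a univalent map, so Koebe distortion gives $\mathrm{diam}^2\lesssim$ area with an absolute constant. The $O(g)$ regions containing branch points each have $\mathrm{diam}^2\le 4$, contributing $O(g)/n$ in total.

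If the distortion estimate near branch points proves too lossy, the fallback is to shrink to sub-disks that avoid a small neighborhood of each branch point. The vertices touching those neighborhoods are then charged a constant each; there are only $O(g)$ such vertices per branch point after a suitable choice of the cover.
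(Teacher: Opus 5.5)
Your final step matches the paper. The corollary is just the observation that a pair $(f,y)$ with $\|f(v)\|^2=1/n$, $\sum_v f(v)=0$ and $y(u)+y(v)\ge\|f(u)-f(v)\|^2$ is feasible in \cref{obs:lambda-2-dual}, so $\lambda_2^*(G)\le\sum_v y(v)$. All the content lies in producing the pair of \cref{prop:formalized-genus-solution}. Your planar case is fine: a cap of angular radius $r$ has area at least $4r^2/\pi$, so $\sum_v y(v)\le 2\pi^2/n$. Your Step~1 can be repaired by adding vertices, since you only need to balance the $n$ original centers.

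The genuine gap is the case $g\ge1$, at exactly the step you flag. Everything rests on an estimate $\sum_v\mathrm{diam}(\Phi(D_v))^2\lesssim g+1$ for the balanced branched cover $\Phi$, and the reasons you give for it are not valid:
\begin{itemize}
\item A disk containing no branch point need not be mapped injectively. Near a branch point of order $m\ge3$ the cover looks like $z\mapsto z^m$, which is not injective on a nearby disk subtending an angle larger than $2\pi/m$.
\item Even univalence on $D_v$ gives no bound on $\mathrm{diam}^2/\mathrm{area}$ for $\Phi(D_v)$. The Riemann map of the disk onto a $1\times\varepsilon$ rectangle has ratio about $1/\varepsilon$. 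Koebe distortion controls $\Phi(D_v)$ only when $\Phi$ is univalent on a definite enlargement such as $2D_v$.
\item Nothing in your proposal bounds the number of vertices for which univalence on $2D_v$ fails. For a fixed point of the surface it can be unbounded, e.g.\ a long chain of geometrically shrinking circles converging to it, and the packing is dictated by $G$. The critical points of $\Phi$ are not moved by the M\"obius balancing, which can moreover magnify an arbitrarily small region. ``A suitable choice of the cover'' is neither specified nor justified.
\end{itemize}
Charging a constant to each such vertex can therefore cost $\Theta(1)$ rather than $O(g/n)$. Any finer charge requires exactly the uniform distortion estimate that is missing, which is the obstruction that made earlier circle-packing arguments degree-dependent. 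Separately, ``roughly $1008\pi$'' is not a derivation, so even a repaired argument would not yet give the stated constant.

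The paper never needs to control the shape of images of packing disks. The explicit pair in \cref{prop:formalized-genus-solution} comes from the Lean development, which uses Riemann--Roch directly. The written proof of the same bound up to constants (\cref{thm:main-genus}) removes the high-degree issue before any geometry is used. In \cref{prop:degree-reduction}, each vertex $v$ is replaced by a balanced binary tree with $N\pi(v)$ ordered leaves, giving a subcubic graph embedded in the same surface. The bounded-degree bound of Amini and Cohen-Steiner is applied there, and the test subspace is pulled back to $G$ by averaging over each tree, with the Poincar\'e inequality of \cref{lem:tree} controlling the error. Unbounded degree, which is what lets packing circles of wildly different sizes crowd around a point, is thus eliminated combinatorially rather than handled analytically.
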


The formalization was generated using Codex with GPT 5.6-sol ``extra high'',
and its statements and definitions were carefully checked over by a human.
The definition of genus-$g$ graphs is based on the Euler characteristic and a rotation system.
The formalized techniques used to prove the result actually differ from the paper, for good reason.
Our paper's proof relies on a result by Amini and Cohen-Steiner~\cite{amini2018transfer} as a black-box.
However, their result relies on some rather deep theorems that have not yet been formalized in Lean,
themselves based on the Riemann-Roch theorem.
Instead, it turns out that for the $\lambda_2^*$ case
we can use Riemann-Roch more directly, simplifying the formalization task.

\subsection{Technical Overview}

Our techniques for bounded genus graphs and minor-free graphs differ significantly,
while our methods for extending our bounds to other types of eigenvalues are unified.

\paragraph{Bounded-Genus Graphs}
Bounded-genus graphs of bounded-degree are already known to admit
eigenvalue bounds matching \cref{conj:genus} (and similar for higher eigenvalues)~\cite{amini2018transfer}.
Using a sequence of reductions, we directly leverage this bound in order to bootstrap all the way to
a bound on reweighted eigenvalues for bounded-genus graphs of unbounded degree.
Importantly, each of our bootstrapping steps is \emph{class-wide} for bounded-genus graphs,
rather than per-graph.

\paragraph{Minor-Free Graphs}
At a high level,
our techniques for minor-free graphs are based on the framework of
Biswal, Lee, and Rao~\cite{biswal2008eigenvalue,biswal2010eigenvalue},
and its extensions by
Kelner, Lee, Price, and Teng~\cite{kelner2009higher,kelner2011metric},
Tung~\cite{tung2025reweighted},
and
Spalding-Jamieson~\cite{spalding2025reweighted}.

This infrastructure is well-established,
and relies on bounds for the $L_2$-congestion of a certain kind of multicommodity flow.
Essentially all previous bounds on these $L_2$-congestion quantities have long been thought to be suboptimal,
and likely related to Hadwiger's conjecture.

Our main contribution provides an optimal bound for this congestion problem
by extending an argument of Korhonen and Lokshtanov~\cite{korhonen2024induced}
that uses the Lovász Local Lemma.
Notably, unlike previous methods that seemingly related the problem directly to Hadwiger's conjecture,
and were inherently combinatorial,
the proof we give is inherently probabilistic.

\paragraph{Other Eigenvalues}
In order to obtain our bounds on other kinds of eigenvalues,
we establish that our bounds on reweighted eigenvalues
are stronger.
For Laplacian eigenvalues and normalized Laplacian eigenvalues,
this simply follows from the definitions.
However, for Steklov eigenvalues,
our proof is once again \emph{class-wide},
and in fact applies to any class closed under adding leaves.
The details for this proof are rather tedious, but
at a high level, the main idea is to add a large number of leaves
to every boundary vertex, thereby weighing the boundaries quite heavily in any Laplacian quadratic form,
and to relate the modified graph to the original via a Schur complement.

\clearpage
\begin{landscape}
\thispagestyle{empty}
\begin{table}[H]
\centering
\hfuzz=2pt
\fontsize{7.5}{9.3}\selectfont
\setlength{\tabcolsep}{1pt}
\renewcommand{\arraystretch}{1.45}
\begin{tabular}{
  @{}
  >{\centering\arraybackslash}p{.032\linewidth}
  >{\centering\arraybackslash}p{.058\linewidth}
  V
  >{\centering\arraybackslash}p{.082\linewidth}
  >{\centering\arraybackslash}p{.073\linewidth}
  V
  *{2}{>{\centering\arraybackslash}p{.061\linewidth}}
  V
  >{\centering\arraybackslash}p{.083\linewidth}
  >{\centering\arraybackslash}p{.074\linewidth}
  >{\centering\arraybackslash}p{.096\linewidth}
  >{\centering\arraybackslash}p{.102\linewidth}
  V
  *{2}{>{\centering\arraybackslash}p{.075\linewidth}}
  V
  >{\centering\arraybackslash}p{.046\linewidth}
  @{}
}
\toprule
\textbf{Year}
& \textbf{Ref.}
& \multicolumn{2}{V c V}{\textbf{Laplacian $L_G$}}
& \multicolumn{2}{c V}{\textbf{Normalized $\mathcal L_G$}}
& \multicolumn{4}{c V}{\textbf{Reweighted}}
& \multicolumn{2}{c V}{\textbf{Steklov}}
& \textbf{Triang.}\\
&
& $\boldsymbol{\lambda_2(L_G)\lesssim}$
& $\boldsymbol{\lambda_k(L_G)\lesssim}$
& $\boldsymbol{\lambda_2(\mathcal L_G)\lesssim}$
& $\boldsymbol{\lambda_k(\mathcal L_G)\lesssim}$
& $\boldsymbol{\lambda_2^*(G)\lesssim}$
& $\boldsymbol{\lambda_k^*(G)\lesssim}$
& $\boldsymbol{\lambda_2^*(G,\pi)\lesssim}$
& $\boldsymbol{\lambda_k^*(G,\pi)\lesssim}$
& $\boldsymbol{\sigma_2(H,B)\lesssim}$
& $\boldsymbol{\sigma_k(H,B)\lesssim}$
&\\
\midrule
\multicolumn{13}{c}{\textbf{Genus-$g$ graphs}}\\
\specialrule{0.25pt}{0.7pt}{0.7pt}
2004 & \cite{kelner2006spectral}
& $\frac{\Delta^{O(1)}g}{n}$
& & & & & & & & & & yes\\
\specialrule{0.25pt}{0.7pt}{0.7pt}
2008 & \cite{biswal2010eigenvalue}
& $\frac{\Delta g^3}{n}$
& & & & & & & & & &\\
\specialrule{0.25pt}{0.7pt}{0.7pt}
2009 & \cite{kelner2009higher}
& $\frac{\Delta g^3}{n}$
& $\frac{\Delta g^3k}{n}$
& & & & & & & & &\\
\specialrule{0.25pt}{0.7pt}{0.7pt}
2010 & \cite{lee2010genus}
& $\frac{\Delta g\log^2 g}{n}$
& $\frac{\Delta gk\log^2 g}{n}$
& & & & & & & & &\\
\specialrule{0.25pt}{0.7pt}{0.7pt}
2011 & \cite{kelner2011metric}
& $\frac{\Delta g\log^2 g}{n}$
& $\frac{\Delta gk\log^2 g}{n}$
& & & & & & & & &\\
\specialrule{0.25pt}{0.7pt}{0.7pt}
2014 & \cite{amini2018transfer}
& $\frac{\Delta^2g}{n}$
& $\frac{\Delta^2(g+k)}{n}$
& $\frac{\Delta g}{n}$
& $\frac{\Delta(g+k)}{n}$
& & & & & & &\\
\specialrule{0.25pt}{0.7pt}{0.7pt}
2024 & \cite{lin2024upper}\footnotemark
& & & & & & & &
& $\frac{\Delta g^3}{|B|}$ & &\\
\specialrule{0.25pt}{0.7pt}{0.7pt}
2025 & \cite{tung2025reweighted}
& & & &
& $\frac{g\log^2 g}{n}$
& $\frac{gk\log^2 g}{n}$
& $g\log^2 g\|\pi\|_\infty$
& $gk\log^2 g\|\pi\|_\infty$
& & &\\
\specialrule{0.25pt}{0.7pt}{0.7pt}
2025 & \cite{spalding2025reweighted}
& $\frac{\Delta g\log\Delta}{n}$ & & &
& $\frac{g\log\Delta}{n}$
& & & & & &\\
\specialrule{0.25pt}{0.7pt}{0.7pt}
2025 & \cite{spalding2025reweighted}
& $\frac{\Delta g\log^2 g}{n}$ & & &
& $\frac{g\log^2 g}{n}$
& & & & & &\\
\specialrule{0.25pt}{0.7pt}{0.7pt}
2025 & \cite{chen2026first}
& & & & & & & &
& $\frac{\Delta^{O(1)}g}{|B|}$ & & yes\\
\specialrule{0.25pt}{0.7pt}{0.7pt}
2026 & \cite{zhan2026higherstekloveigenvaluesgraphs}
& & & & & & & &
& $\frac{\Delta g\log^2 g}{|B|}$
& $\frac{\Delta gk\log^2 g}{|B|}$ &\\
\specialrule{0.7pt}{1.4pt}{1.4pt}
2026 & \textbf{This paper}
& $\boldsymbol{\frac{\Delta g}{n}}$
& $\boldsymbol{\frac{\Delta(g+k)}{n}}$
& $\boldsymbol{\frac{\Delta g}{n}}$
& $\boldsymbol{\frac{\Delta(g+k)}{n}}$
& $\boldsymbol{\frac gn}$
& $\boldsymbol{\frac{g+k}{n}}$
& $\boldsymbol{g\|\pi\|_\infty}$
& $\boldsymbol{(g+k)\|\pi\|_\infty}$
& $\boldsymbol{\frac{\Delta g}{|B|}}$
& $\boldsymbol{\frac{\Delta(g+k)}{|B|}}$
&\\
\specialrule{1.2pt}{2.2pt}{2.2pt}
\multicolumn{13}{c}{\textbf{$K_h$-minor-free graphs}}\\
\specialrule{0.25pt}{0.7pt}{0.7pt}
2008 & \cite{biswal2010eigenvalue}
& $\frac{\Delta h^6\log h}{n}$
& & & & & & & & & &\\
\specialrule{0.25pt}{0.7pt}{0.7pt}
2009 & \cite{kelner2011metric}
& $\frac{\Delta h^6\log h}{n}$
& $\frac{\Delta h^6k\log h}{n}$
& & & & & & & & &\\
\specialrule{0.25pt}{0.7pt}{0.7pt}
2025 & \cite{tung2025reweighted}
& & & &
& $\frac{h^6\log h}{n}$
& $\frac{h^6k\log h}{n}$
& $h^6\log h\|\pi\|_\infty$
& $h^6k\log h\|\pi\|_\infty$
& & &\\
\specialrule{0.25pt}{0.7pt}{0.7pt}
2025 & \cite{spalding2025reweighted}
& \scalebox{.84}{%
  $\frac{\Delta(h\log h\log\log h)^2}{n}$%
}
& & &
& \scalebox{.92}{%
  $\frac{(h\log h\log\log h)^2}{n}$%
}
& & & & & &\\
\specialrule{0.25pt}{0.7pt}{0.7pt}
2026 & \cite{zhan2026higherstekloveigenvaluesgraphs}
& & & & & & & &
& $\frac{\Delta h^6\log h}{|B|}$
& $\frac{\Delta h^6k\log h}{|B|}$ &\\
\specialrule{0.7pt}{1.4pt}{1.4pt}
2026 & \textbf{This paper}
& $\boldsymbol{\frac{\Delta h^2\log^2 h}{n}}$
& $\boldsymbol{\frac{\Delta h^2k\log^3 h}{n}}$
& $\boldsymbol{\frac{\Delta h^2\log^2 h}{n}}$
& \scalebox{.94}{%
  $\boldsymbol{\frac{\Delta h^2k\log^3 h}{n}}$%
}
& $\boldsymbol{\frac{h^2\log^2 h}{n}}$
& $\boldsymbol{\frac{h^2k\log^3 h}{n}}$
& $\boldsymbol{h^2\log^2 h\|\pi\|_\infty}$
& $\boldsymbol{h^2k\log^3 h\|\pi\|_\infty}$
& $\boldsymbol{\frac{\Delta h^2\log^2 h}{|B|}}$
& $\boldsymbol{\frac{\Delta h^2k\log^3 h}{|B|}}$
&\\
\bottomrule
\end{tabular}
\caption{
History of bounds for genus-$g$ and $K_h$-minor-free graphs.
``yes'' marks triangulation-only arguments whose claimed general
extensions have errors~\cite{spalding2025reweighted}.
}
\label{tab:history}
\end{table}
\footnotetext{This bound holds only in restricted cases.}
\end{landscape}

\section{Bounded-Genus Graphs}

In this section, we will prove
\cref{thm:main-genus}.

The proof essentially follows from the following proposition, the proof of which relies on a particular way of relating spectral information of the graph $G$ to that of another graph with smaller bounded degree.

\begin{proposition}[Degree reduction]\label{prop:degree-reduction}
Let $G$ be a simple connected graph embedded in an orientable surface $\Sigma$, let $\pi$ be a positive probability distribution on $V(G)$, and let $2\le k\le |V(G)|$.  Suppose that for the Laplacian eigenvalues we have 
\[
\lambda_k(L_H)\le \frac{A}{|V(H)|}
\]
for every simple connected graph $H$ embedded in $\Sigma$ and associated combinatorial Laplacian $L_H$, with $\Delta(H)\le3$ and $|V(H)|\ge k$.  Then the reweighted eigenvalues satisfy
\[
\lambda_k^*(G,\pi)\lesssim A\|\pi\|_\infty.
\]
\end{proposition}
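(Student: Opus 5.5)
The plan is to go through the weighted form of \cref{obs:reweighted-weighted-laplacian}. Fix an arbitrary feasible weighting $w\in\RR_{\ge0}^E$ with $\sum_{e\ni v}w_e\le\pi(v)$. By Courant--Fischer, it suffices to exhibit a $k$-dimensional space $U$ of functions $f:V\to\RR$ with
\[
\sum_{uv\in E}w_{uv}(f(u)-f(v))^2\lesssim A\|\pi\|_\infty\sum_{v}\pi(v)f(v)^2
\qquad\text{for all } f\in U.
\]
Since every eigenvalue of $I-P$ is at most $2$, we may also assume $A\|\pi\|_\infty\le c$ for a small absolute constant $c$; otherwise the claim is trivial. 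I will produce $U$ by pulling back low-energy functions from an auxiliary graph $H$ that is embedded in $\Sigma$, simple, connected, has $\Delta(H)\le3$, and has $|V(H)|\ge k$. The hypothesis then applies to $H$.

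\emph{Construction of $H$.} Fix a large scale $M$, round $m_{uv}:=\lceil Mw_{uv}\rceil$, and let $N_v\approx M\pi(v)+\deg(v)$, so that $N_v\ge\sum_{u}m_{uv}$. Replace each vertex $v$ by a planar gadget $D_v$ that is drawn in a small disk around $v$'s position in the embedding. The gadget should have about $N_v$ vertices and maximum degree $3$, and it should have at least $\sum_u m_{uv}$ degree-$\le2$ ``ports'' on its outer face. Each edge $uv$ is replaced by $m_{uv}$ disjoint parallel edges running along the original edge's drawing, joining ports of $D_u$ and $D_v$. The ports are allocated in the cyclic rotation order at $v$, so that the resulting graph remains embedded in $\Sigma$. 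Then $|V(H)|\asymp M$ because $\sum\pi=1$, $H$ is simple with $\Delta(H)\le 3$, and letting $M\to\infty$ makes the rounding error negligible. The hypothesis gives a $k$-dimensional space $W$ of functions $g$ on $V(H)$ with $E_H(g)\le \frac{A}{|V(H)|}\|g\|^2\lesssim\frac AM\|g\|^2$.

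\emph{Pullback.} For $g\in W$, set $f(v):=\frac1{N_v}\sum_{a\in D_v}g(a)$, and let $\mathrm{Var}_v:=\sum_{a\in D_v}(g(a)-f(v))^2$.
\begin{itemize}
\item For an edge $uv$ with $w_{uv}\approx m_{uv}/M$, average the inequality $(f(u)-f(v))^2\le3[(f(u)-g(a))^2+(g(a)-g(b))^2+(g(b)-f(v))^2]$ over the $m_{uv}$ copies $ab$ of $uv$. Since each port carries a single external edge, summing gives $E_w(f)\lesssim\frac1M\bigl(E_H(g)+\sum_v\mathrm{Var}_v\bigr)$.
\item For the denominator, $\sum_v\pi(v)f(v)^2\approx\frac1M\sum_vN_vf(v)^2=\frac1M\bigl(\|g\|^2-\sum_v\mathrm{Var}_v\bigr)$.
\item If each gadget satisfies a Poincaré inequality $\mathrm{Var}_v\le P\cdot E_{D_v}(g)$ with $P\lesssim\max_vN_v\approx M\|\pi\|_\infty$, then
\[
\sum_v\mathrm{Var}_v\lesssim M\|\pi\|_\infty\cdot\frac AM\|g\|^2=A\|\pi\|_\infty\|g\|^2\le c\|g\|^2 .
\]
\end{itemize}
This has two consequences. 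First, $g\mapsto f$ is injective on $W$, so $U$ is $k$-dimensional. Second, for every $f\in U$ the Rayleigh quotient is $\lesssim \frac{(A/M)\|g\|^2}{\|g\|^2/M}=A$ in uniform terms. To get the factor $\|\pi\|_\infty$ correctly, I would instead bound $E_w(f)$ directly against $\frac AM\|g\|^2$. Because $\sum_v N_vf(v)^2\ge\frac12\|g\|^2$, the ratio against $\sum_v\pi(v)f(v)^2$ comes out as $\lesssim A$. Here I expect to check the normalization carefully: the hypothesis is stated with $1/|V(H)|$ and uniform vertex measure, while $\pi$ has total mass $1$, so $\pi(v)\approx N_v/M$ and the target bound is $A\|\pi\|_\infty\ge A/n$. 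Matching these is where the role of $\|\pi\|_\infty$ enters, and it may force choosing $N_v$ with a floor of order $M\|\pi\|_\infty$ for every $v$.

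\emph{Main obstacle.} The crux is the gadget $D_v$. It must simultaneously be planar with degree at most $3$, have a number of ports proportional to its size (since $\sum_u w_{uv}$ may be as large as $\pi(v)$), and have Poincaré constant $O(N_v)$ rather than $O(N_v^2)$; a plain cycle fails the last requirement. A square grid has the right Poincaré constant but only about $\sqrt{N_v}$ boundary vertices. My first attempt would be a cycle $C$ of length about $N_v$ whose arcs are joined through a degree-$3$ planar ``fan'' of nested grid-like layers: an annulus of width about $\sqrt{N_v}$ that is refined inward dyadically, with the layer lengths shrinking geometrically. The goal is to keep the total size $O(N_v)$ while giving effective resistance and Poincaré constant $O(N_v)$ without a $\log N_v$ loss. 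If that log cannot be removed, my fallback is to group vertices dyadically by $\pi(v)$ and use the slack $A\|\pi\|_\infty\le c$ to absorb the losses.
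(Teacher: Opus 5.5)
\textbf{There is a genuine gap: the vertex gadget is never constructed.} Your architecture is the paper's: one bounded-degree planar gadget per vertex with ports allocated in rotation order, bundles of $m_{uv}$ parallel edges, pullback by gadget averages, rerouting through the averages for the numerator, a variance decomposition for the denominator, and smallness of $A\|\pi\|_\infty$ for injectivity. But the step you yourself call the crux is left open, and it is the one genuinely nontrivial ingredient. You need a planar gadget of maximum degree $3$ whose ports are a constant fraction of its vertices and sit on the outer face in a prescribed cyclic order, and whose Poincar\'e constant is $O(|D_v|)$ with \emph{no} logarithmic loss.

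Your annulus with dyadically refined layers is not actually specified, and your fallback is not an argument. In your own scheme $M\to\infty$, and in any case $M$ must be large enough that $M\pi(v)\gg\deg(v)$ for every $v$. So a gadget losing a factor $\log N_v\approx\log(M\pi(v))$ contributes a factor that is unbounded in terms of $A$ and $\|\pi\|_\infty$. That loss breaks both the injectivity step, which needs $\sum_v\mathrm{Var}_v\le c\|g\|^2$, and the final bound. Neither a constant slack $A\|\pi\|_\infty\le c$ nor regrouping by $\pi(v)$ absorbs a loss that grows with the discretization scale.

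\textbf{The missing idea is to use a balanced binary tree as the gadget} (\cref{lem:tree}). With $s_v$ leaves it has $2s_v-1$ vertices and maximum degree $3$, and its leaves lie on the outer face in any prescribed order. Each leaf has degree one, so it can take one external edge. Its Poincar\'e constant is at most $4|V(T)|$:
\begin{itemize}
\item Along the root path $r=v_0,\dots,v_\ell=z$ with edge differences $\delta_j$, weighted Cauchy--Schwarz gives $(f(z)-f(r))^2\le\sum_j2^j\delta_j^2$.
\item After summing over $z$, the edge entering a subtree $T_a$ rooted at depth $d$ carries coefficient $2^d|V(T_a)|\le4s$, because subtree sizes halve with depth.
\end{itemize}
This geometric decay is what removes the logarithm.

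Once you have this gadget, your normalization worry also disappears, with no floor on $N_v$ needed. Your own bullets give
\[
E_w(f)\lesssim\tfrac1M(1+P)E_H(g)\lesssim\|\pi\|_\infty E_H(g)\lesssim\|\pi\|_\infty\tfrac AM\|g\|^2,
\]
using $P\lesssim M\|\pi\|_\infty$ and $M\|\pi\|_\infty\ge1$. The denominator is $\gtrsim\|g\|^2/M$. So the factor $\|\pi\|_\infty$ enters through the gadget's Poincar\'e constant multiplying the numerator.

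Also make sure $H$ is connected when some $w_{uv}=0$. You can do this by taking $m_{uv}\ge1$, or, as the paper does, by perturbing $w$ to a strictly positive weighting with slack and passing to the limit via continuity of eigenvalues.
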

\begin{proof}[Proof of Theorem~\ref{thm:main-genus}]
We only consider the case that the graph $G$ is simple and connected.
If $G$ is not simple, then multiple edges are replaced by one edge with edge weights of the constituent edges added, and
loops are simply removed. If $G$ is not connected, we can add edges to obtain a connected embedded graph on $\Sigma$. Note that merging parallel edges preserves $\lambda_k^*(G,\pi)$, while adding edges to connect the components can only increase it. The case $k=1$ is immediate, since the left hand side is $0$.
For $k\ge2$, embed $G$ in a surface of genus at most $g$.
Write $\mathcal L_H=D_H^{-1/2}L_HD_H^{-1/2}$ for the normalized Laplacian.
By~\cref{thm:other-eigenvalue-transfer} (in this use it is really a repackaging of~\cite[Theorem~1.1]{amini2018transfer}), we know that for bounded degree connected graphs $H$ of genus at most $g$, and  $1\le k\le \lvert V(H) \rvert$
\[
\lambda_k(\mathcal L_H)\lesssim \frac{\Delta(H)(g+k)}{\lvert V(H)\rvert }
\]
holds. By the Courant-Fischer theorem, $\lambda_k(L_H)\le \Delta(H)\lambda_k(\mathcal L_H)$.
Since $H$ was arbitrary, \Cref{prop:degree-reduction} applies,
and the result follows, with $A\lesssim g+k$.
\end{proof}

\subsection{Degree reduction}\label{sec:degree-reduction}

We will make use of a certain gadget in the proof, which is a tree with specific properties.

\begin{lemma}[Ordered binary tree]\label{lem:tree}
For every $s\ge1$ and every prescribed order of $s$ leaves, there is a plane binary tree $T$ where the leaves appear, written from left to right, in that order, $|V(T)|=2s-1$, the maximum degree is at most three, and such that every function on vertices $f:V(T)\to\RR$ satisfies
\begin{equation}\label{eq:tree-poincare}
\sum_{z\in V(T)}\bigl(f(z)-\bar f\bigr)^2
\le
4|V(T)|\sum_{ab\in E(T)}\bigl(f(a)-f(b)\bigr)^2,
\qquad
\bar f=\frac1{|V(T)|}\sum_{z\in V(T)}f(z).
\end{equation}
\end{lemma}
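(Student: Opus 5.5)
We will take $T$ to be a \emph{balanced} (complete, or nearly complete) full binary tree with $s$ leaves. Every internal node has exactly two children, so $|V(T)|=2s-1$. Since the tree is abstractly balanced, we can draw it in the plane and assign the prescribed labels to the leaves from left to right in the prescribed order. This ordering is only a labelling of a plane drawing and imposes no constraint on the shape of the tree. The root has degree at most $2$ and every other vertex has degree at most $3$, so $\Delta(T)\le3$. For $s=1$ the tree is a single vertex and \eqref{eq:tree-poincare} is trivial. The only real content is therefore the Poincar\'e inequality.

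For \eqref{eq:tree-poincare} we use the standard path (canonical-path) argument. First, since $\bar f$ minimizes $c\mapsto\sum_z(f(z)-c)^2$,
\[
\sum_{z}(f(z)-\bar f)^2=\frac1{2|V(T)|}\sum_{x,y\in V(T)}(f(x)-f(y))^2 .
\]
For each ordered pair $(x,y)$, let $P_{xy}$ be the unique tree path between them. By Cauchy--Schwarz,
\[
(f(x)-f(y))^2\le |P_{xy}|\sum_{ab\in P_{xy}}(f(a)-f(b))^2 ,
\]
where $|P_{xy}|$ denotes the number of edges of the path. Summing over pairs and exchanging the order of summation gives
\[
\sum_{x,y}(f(x)-f(y))^2\le \max_{ab\in E(T)}\Bigl(\sum_{P_{xy}\ni ab}|P_{xy}|\Bigr)\sum_{ab\in E(T)}(f(a)-f(b))^2 .
\]
It therefore suffices to show that every edge $e$ satisfies $\sum_{P_{xy}\ni e}|P_{xy}|\le 8|V(T)|^2$.

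To bound the congestion, fix an edge $e$ whose lower side is a subtree of size $m$. Exactly $2m(N-m)\le 2mN$ ordered pairs $(x,y)$ have $P_{xy}\ni e$, where $N=|V(T)|$. Each such path has length at most $2\operatorname{depth}(T)\le 2\log_2 N+O(1)$. This crude estimate gives $O(N^2\log N)$, which is too weak. The main obstacle is to remove this logarithm. To do so, use the structure of balanced trees more carefully. If the lower subtree has height $h_e$, then $m\approx 2^{h_e}$. A pair crossing $e$ has its endpoint below $e$ at distance at most $h_e$ from $e$, and its endpoint above $e$ at some distance $d$. The number of vertices at distance about $d$ above $e$ grows only like $2^{d}$ times $m$-sized subtrees, hanging off ancestors. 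Summing these contributions gives
\[
\sum_{P\ni e}|P|\;\lesssim\; m\sum_{j\ge0} 2^{j}\,m\,(h_e+j)+m^2 h_e ,
\]
where $j$ ranges over ancestor levels up to $\log_2(N/m)$. This is dominated by its top term, which is $N\cdot m\cdot(h_e+\log(N/m))=Nm\log_2 N$. This is still logarithmic when $m$ is large.

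If the logarithm truly persists at the root edges, one must check this directly. For an edge adjacent to the root with $m\approx N/2$, the average path length is indeed about $2\log N$, so the congestion is genuinely $\Theta(N^2\log N)$. Consequently the linear-in-$N$ Poincar\'e constant must come from the fact that the spectral gap of a binary tree is $\Theta(1/N)$ (a cut at the root edge gives conductance about $1/N$), not from path counting. The fallback plan is therefore a weighted path argument, or more simply induction on the tree. Write $T$ as a root $r$ joined to subtrees $T_1,T_2$, each of size at most $N/2$. Apply the inductive bound with constant $4|V(T_i)|$ inside each subtree. The two subtree means are then linked through $r$ by paths of length $\le \operatorname{depth}$. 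A Cauchy--Schwarz step with weights summing geometrically down the levels, bounding $(f(v)-f(r))^2$ along root paths with edge weights $2^{\ell}$ at level $\ell$, gives $\sum_v (f(v)-f(r))^2\le C N\sum_e (f(a)-f(b))^2$ with $C=4$. This follows because $\sum_\ell 2^{-\ell}\cdot(\text{number of vertices below level }\ell)\le 2N$. Verifying this geometric-weighting computation and tracking the constant $4$ is the step we expect to be delicate.
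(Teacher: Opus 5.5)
Your last paragraph does arrive at the paper's argument. There, one roots the balanced tree at $r$, telescopes $f(z)-f(r)$ along the root-to-$z$ path $r=v_0,\dots,v_\ell=z$, and applies Cauchy--Schwarz with weight $2^j$ on the $j$th edge. This gives $(f(z)-f(r))^2\le\bigl(\sum_{j\ge1}2^{-j}\bigr)\sum_j2^j\delta_j^2\le\sum_j2^j\delta_j^2$. Once $\sum_z(f(z)-f(r))^2\le4|V(T)|\sum_{ab\in E(T)}(f(a)-f(b))^2$ is established, you are finished, because $\bar f$ minimizes $t\mapsto\sum_z(f(z)-t)^2$. The induction over the two root subtrees is therefore unnecessary. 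As sketched, it would also need root-centered rather than mean-centered bounds inside each $T_i$ to link $\bar f_i$ to $f(r)$. Note too that the $\log N$ loss in your first attempt comes from unweighted all-pairs routing, not from path counting as such. The fix is still a path argument, using single-source paths with geometric edge weights.

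The genuine gap is that you never carry out the one computation that carries all the content, and the reason you give for it is wrong. After summing over $z$, the edge entering a vertex $a$ at depth $d$ lies on the root path of exactly the $|V(T_a)|$ vertices of the subtree $T_a$. Its coefficient is therefore $2^d|V(T_a)|$, so what must be shown is the per-edge bound $2^d|V(T_a)|\le4|V(T)|$ for every non-root $a$. Your criterion $\sum_\ell2^{-\ell}\cdot(\text{number of vertices below level }\ell)\le2N$ holds for every rooted tree on $N$ vertices, since each summand is at most $2^{-\ell}N$. In particular it holds for the caterpillar full binary tree, a spine path with one pendant leaf per spine vertex. There, taking $f$ equal to the spine index makes the left side of the inequality of order $N^3$ and the right side of order $N^2$, so the inequality fails. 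Your criterion therefore cannot be what makes the argument work; balance has to enter. The missing verification is short. In the tree built by recursively halving the ordered leaf interval, a vertex at depth $d$ has at most $\lceil s/2^d\rceil$ leaves below it, so $|V(T_a)|\le2\lceil s/2^d\rceil-1$. Every depth that occurs satisfies $2^d\le2s$. Hence $2^d|V(T_a)|<2s+2^d\le4s\le4|V(T)|$, which yields exactly the constant $4$.
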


\begin{proof}
We start by recursively splitting each ordered interval of the $s$ leaves into two consecutive parts whose sizes differ by at most one. It is straightforward to check that the resulting full binary tree $T$ has $2s-1$ vertices.

Root the tree at its top vertex $r$.  A subtree $T_a$ rooted at a vertex $a$ at depth $d$ has at most $\lceil s/2^d\rceil$ leaves and hence at most $|V(T_a)|\le 2\lceil s/2^d\rceil-1$ vertices. Any vertex occurring at depth $d$ satisfies $2^d\le2s$, so we have 
\begin{equation}\label{eq:subtree-bound}
2^d|V(T_a)|\le4s\le4|V(T)|
\end{equation}
for every vertex $a$ at depth $d$.  For given $f$ and $z$, consider the \emph{edge differences} $\delta_1,\ldots,\delta_\ell$, defined as follows. First, take the unique simple edge path from the root $r$ to $z$, written as a sequence of adjacent vertices as $r=v_0,v_1,\dots,v_{\ell}=z$, then $\delta_j=f(v_j)-f(v_{j-1})$. Telescoping and weighted Cauchy--Schwarz gives
\[
\bigl(f(z)-f(r)\bigr)^2
\le
\left(\sum_{j=1}^{\ell}2^{-j}\right)
\left(\sum_{j=1}^{\ell}2^j\delta_j^2\right)
\le
\sum_{j=1}^{\ell}2^j\delta_j^2.
\]
After summing over $z$, the edge entering a depth-$d$ subtree appears in the sum with a coefficient of $2^d|V(T_a)|$, which is at most $4|V(T)|$, by \eqref{eq:subtree-bound}.  Thus
\[
\sum_z\bigl(f(z)-f(r)\bigr)^2
\le4|V(T)|\sum_{ab\in E(T)}\bigl(f(a)-f(b)\bigr)^2.
\]
The average $\bar f$ minimizes $t\mapsto\sum_z(f(z)-t)^2$, concluding the proof of the lemma.
\end{proof}
\begin{proof}[Proof of Proposition~\ref{prop:degree-reduction}]
Let $\pi$ be a positive probability distribution on the vertex set $V(G)$ of the graph $G$. An edge weighting $w\in\RR_{\ge 0}^{E(G)}$ is \textbf{admissible} if
$\sum_{e\ni v}w_e\le\pi(v)$ for every $v$. For this proof, we will use the equivalent formulation of reweighted eigenvalues via the Rayleigh quotient  
\[
\lambda_k(w,\pi)
=
\min_{\substack{U\le\RR^{V(G)}\\ \dim U=k}}
\max_{0\ne x\in U}
\frac{\sum_{uv\in E(G)}w_{uv}(x_u-x_v)^2}
     {\sum_{v\in V(G)}\pi(v)x_v^2},
\qquad
\lambda_k^*(G,\pi)=\max_{w\text{ admissible}}\lambda_k(w,\pi).
\]

Fix an admissible weighting
$w_{uv}$ for $uv\in E(G)$. For every $x\in\RR^{V(G)}$, by Cauchy-Schwarz,
\begin{equation}\label{eq:universal-two}
\sum_{uv\in E(G)}w_{uv} (x_u-x_v)^2
\le2\sum_{v\in V(G)}\pi(v)x_v^2.
\end{equation}
The proposition is immediate whenever $A \|\pi\|_\infty \ge 1/16$, so we assume otherwise. Also, we first treat the case in which both $\pi$ and $w$ are positive and rational. The other case will follow by continuity. This assumption allows us to choose $N>1$ so that for every $v$ and $e$ 
\[
s_v=N\pi(v)\in\mathbb N,
\qquad
m_e=N w_e\in\mathbb N.
\]

\begin{figure}[H]
\centering
\includegraphics[page=1]{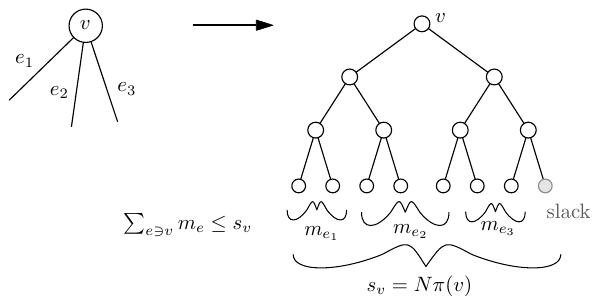}
\caption{Replacing vertex with a tree.}
\label{fig:vertexreplacement}
\end{figure}

By admissibility, we have $\sum_{e\ni v}m_e\le s_v$. We now replace each vertex $v$ of $G$ by the tree $T_v$ from Lemma~\ref{lem:tree} with $s_v$ ordered leaves, drawn in a small neighborhood of each vertex, see~\cref{fig:vertexreplacement}. These trees are connected to one another, using blocks of edges, according to how $G$ is connected. More precisely, for each edge $e$ in $G$ incident to $v$, we assign consecutive (in the cyclic order around $v$) blocks of $m_e$ leaves of $T_v$. For each such edge $e=uv$, we match a block associated to $v$ to a block associated to $u$ by inserting $m_e$ edges in order (reversing one order so that the edges realizing the matching are noncrossing). \cref{fig:edge-bundle} illustrates the procedure. Since $G$ was embedded in $\Sigma$, the resulting graph $H$ is too, has maximum degree at most three, and is simple and connected because the endpoints of the new edges are distinct and $w_e>0$ for every $e$. We also record
\begin{equation}\label{eq:size-H}
|V(H)|=\sum_v(2s_v-1)=2N-|V(G)|.
\end{equation}

\begin{figure}[H]
\centering
\includegraphics[page=2]{replacement.pdf}
\caption{A bundle replacing the edge $uv$. The term $x_u-x_v$ is compared against one added edge and two deviations from the averages of the inserted tree.}
\label{fig:edge-bundle}
\end{figure}

For $f:V(H)\to\RR$, we let $x_v$ be the average of $f$ on the tree replacing $v$. Lemma~\ref{lem:tree} and $|V(T_v)|<2N\pi(v)\le2N\|\pi\|_\infty$ imply, since we only sum over internal edges of the tree gadgets, that 
\begin{equation}\label{eq:total-variance}
\sum_v\sum_{z\in V(T_v)}(f(z)-x_v)^2
\le8N\|\pi\|_\infty\sum_{ab\in E(H)}(f(a)-f(b))^2.
\end{equation}
Now, $|V(T_v)|\le2s_v$, so a variance decomposition yields
\begin{align}\label{eq:denominator-transfer}
N\sum_v\pi(v)x_v^2
& \ge 
\sum_v \frac 12 |V(T_v)|x_v^2= 
\frac12 \sum_v \left(\sum_{z\in V(T_v)}f(z)^2- \sum_{z\in V(T_v)} (f(z)-x_v)^2 \right) \nonumber  \\
& \ge \frac 12 \sum_{z\in V(H)} f(z)^2 - 4N\|\pi\|_\infty\sum_{ab\in E(H)}(f(a)-f(b))^2.
\end{align}

For an edge $e=uv$ of the original graph $G$, we write its $m_e$ new edges as $a_i b_i$, with $a_i\in T_u$ and $b_i\in T_v$. Then we can reroute via the averages and use Cauchy-Schwarz again to give 
\[
m_e(x_u-x_v)^2
\le3\sum_{i=1}^{m_e}
\bigl((x_u-f(a_i))^2+(f(a_i)-f(b_i))^2+(f(b_i)-x_v)^2\bigr).
\]
Summing over $e$, every leaf of every inserted tree is used at most once, so applying \eqref{eq:total-variance} together with $N\|\pi\|_\infty\ge1$ yields 
\begin{equation}\label{eq:numerator-transfer}
\sum_{uv\in E(G)}w_{uv}(x_u-x_v)^2
\le 27\|\pi\|_\infty\sum_{ab\in E(H)}(f(a)-f(b))^2.
\end{equation}
We thus have an inequality for both the denominator and the numerator of the Rayleigh quotient, which we will use to conclude the proof. 
First, observe that since $N=\sum_v s_v\ge |V(G)|\ge k$, we get $|V(H)|\ge k$ from~\eqref{eq:size-H}. Let $U$ be the span of the first $k$ eigenvectors of $L_H$.  By hypothesis,
\[
\sum_{ab\in E(H)}(f(a)-f(b))^2
\le \frac{A}{|V(H)|}\sum_{z\in V(H)}f(z)^2,
\qquad \text{ for }f\in U.
\]
We have $|V(H)|\ge N$ and $A\|\pi\|_\infty<1/16$, so~\eqref{eq:denominator-transfer} shows 
\[
N\sum_v\pi(v)x_v^2\ge\frac14\sum_zf(z)^2,
\]
which implies that the linear map $f\mapsto x\in \RR^{V(G)}$ recording the average of $f$ on the tree replacing each vertex in $G$ is injective on $U$. Its image is thus a $k$-dimensional subspace of $\RR^{V(G)}$, and \eqref{eq:numerator-transfer} gives
\[
\frac{\sum_{uv\in E(G)}w_{uv}(x_u-x_v)^2}
     {\sum_v\pi(v)x_v^2}
\le108A\|\pi\|_\infty\frac{N}{|V(H)|}
\le108A\|\pi\|_\infty.
\]
The Courant-Fischer theorem bounds $\lambda_k(w,\pi)$. This proves the bound for positive rational pairs.

For general positive $\pi$ and nonnegative $w$, choose a positive weighting $w^0$ with strict slack. Linear combinations $(1-\varepsilon)w+\varepsilon w^0$ are positive and also have strict slack, so they can be approximated by positive rational admissible pairs $(\pi_j,w_j)$.  The matrices $\operatorname{diag}(\pi_j)^{-1/2}L_{w_j}\operatorname{diag}(\pi_j)^{-1/2}$ are arbitrarily close to the corresponding matrix for $(\pi,(1-\varepsilon)w+\varepsilon w^0)$.  Since eigenvalues depend continuously on the matrix, letting $j\to\infty$ gives the bound for $(\pi,(1-\varepsilon)w+\varepsilon w^0)$. Further sending $\varepsilon\to 0$ and noting that $w$ was arbitrary, we can finally maximize over all admissible $w$ to complete the proof.
\end{proof}

\section{Minor-Free Graphs}

In this section, we will prove
\cref{thm:main-minor}.
We break this into the following two results.

\begin{proposition}
\label{prop:minor-lambda-two}
Let $G=(V,E)$ be an $n$-vertex $K_h$-minor-free graph for $h\geq3$.
Every positive probability distribution $\pi$ on $V$ satisfies
\[
\lambda_2^*(G,\pi)
\lesssim
h^2\log^2h\,\|\pi\|_\infty.
\]
\end{proposition}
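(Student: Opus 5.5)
The plan follows the Biswal--Lee--Rao framework in its reweighted form due to Tung~\cite{tung2025reweighted} and Spalding-Jamieson~\cite{spalding2025reweighted}. That framework reduces an upper bound on $\lambda_2^*(G,\pi)$ to a lower bound on a \emph{vertex-weighted spreading quantity}. Concretely, using the dual form of \cref{obs:lambda-2-dual} (with $\pi$-weights in place of uniform weights), I take as known the following. If for every vertex weighting $\omega:V\to\RR_{\geq0}$ with $\sum_v\omega(v)^2=1$ the induced shortest-path metric $d_\omega$ satisfies
\[
\sum_{u,v\in V}\pi(u)\pi(v)\,d_\omega(u,v)^2\gtrsim\frac{1}{C\,\|\pi\|_\infty},
\]
then $\lambda_2^*(G,\pi)\lesssim C\|\pi\|_\infty$. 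I would not reprove this. Instead I would use LP/convex duality, as in those works, to restate the hypothesis in flow form. One must exhibit a multicommodity flow routing demand $\pi(u)\pi(v)$ between every pair $u,v$ (suitably normalized) whose vertex congestion $c(v)$ satisfies $\sum_v c(v)^2\lesssim h^2\log^2 h\,\|\pi\|_\infty$. So the whole proposition reduces to an optimal $L_2$-congestion bound for all-pairs product-demand flows in $K_h$-minor-free graphs.

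To prove the congestion bound, I would argue by contradiction, or equivalently via the dual cut/metric side. Suppose every such flow has $L_2$-congestion exceeding $K\,h^2\log^2h\,\|\pi\|_\infty$ for a large constant $K$. Dualizing gives a weighting $\omega$ that makes $d_\omega$ small on average. Following the Biswal--Lee--Rao ``region growing/flow decomposition'' step, I would extract from this a large set of terminals, each of bounded $\pi$-mass, together with a family of paths between terminal pairs. The key property of this family is that no vertex lies on too many of the paths relative to how many pairs are routed; this is the ``well-linked but sparse overlap'' structure. The previous suboptimal bounds came from turning this structure into a $K_h$ minor combinatorially, which is where the extra polynomial factors in $h$ entered.

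The main obstacle, and the new ingredient, is the probabilistic construction of a $K_h$ minor from this structure, generalizing Korhonen--Lokshtanov~\cite{korhonen2024induced}. The first attempt would be as follows. Choose $h$ disjoint random groups of terminals, each group of size about $\log h$, to serve as seeds of branch sets. For each of the $\binom h2$ pairs of groups, pick independently one random flow path joining them, sampled proportionally to flow. Build each branch set by taking its seeds and attaching to it an initial segment of each path leaving it, cutting each path at an internal edge so that every path connects its two branch sets.

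The bad events are that two selected paths (or a path and a seed) share a vertex. The congestion bound controls the probability of each collision. Each path has bounded $\omega$-length and so meets few high-congestion vertices, so each bad event depends on only $\mathrm{poly}(h)$ others. The Lov\'asz local lemma then yields a choice in which all $\binom h2$ paths are pairwise internally disjoint. The result is a $K_h$ minor, which is the desired contradiction.

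The calibration I expect to be delicate is choosing the group sizes and the congestion threshold so that the LLL condition $e\,p\,(d+1)\le1$ holds with threshold exactly $h^2\log^2h$. Here one factor of $\log h$ pays for the group sizes, and another pays for the union over dependencies. If collisions between a path and many branch sets cannot be handled directly, my fallback is to split long paths into $O(\log h)$ dyadic length scales and apply the local lemma scale by scale.
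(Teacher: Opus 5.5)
\textbf{There is a genuine gap: your central congestion step runs in the wrong direction.}

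To get an \emph{upper} bound on $\lambda_2^*$ from the minimization dual of \cref{obs:lambda-2-dual}, you need \emph{one} weighting $\omega$ whose metric $d_\omega$ is well spread. So your hypothesis should read ``for some $\omega$'', not ``for every $\omega$''. That is a lower bound on $\overline s_2(G)$. By the Biswal--Lee--Rao duality $\con_2(G)=\overline s_2(G)$ (\cref{lem:congestion-spread-duality}), this is a \emph{lower} bound on the $\ell_2$-congestion of \emph{every} all-pairs flow.

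You instead try to exhibit a flow of small congestion, and that claim is false for minor-free graphs. Take the path $P_n$, which is $K_3$-minor-free, with uniform $\pi$ and demands $1/n^2$. Every flow has congestion about $i(n-i)/n^2$ at the $i$th vertex, so $\sum_v c(v)^2=\Theta(n)$, far above $h^2\log^2h\,\|\pi\|_\infty=O(1/n)$. Minor-free graphs are poorly connected, so all-pairs routing is forced to be congested; it is \emph{low} congestion that would certify a large clique minor.

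Consistently with this, your contradiction hypothesis (every flow highly congested) dualizes to a weighting with \emph{large} spread, not small. It also gives you no low-congestion flow to sample paths from, so your local-lemma step has no input. The paper argues the other way around. It assumes a unit $K_V$-flow $F$ with $\con_2(F)<\varepsilon n^2/h$. A flow path sampled for a uniformly random pair then hits $v$ with probability $q(v)=(2c_F(v)+1)/n^2$. Two independently sampled paths therefore meet with probability at most $\|q\|_2^2=O(1/h^2)$ once $n>100h^2$. Smaller $n$ is handled by the trivial bound $\con_2(F)\geq\sqrt n(n-1)$.

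\textbf{Your minor construction and your bookkeeping also need repair.}

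Building $K_h$ directly from seed groups and $\binom h2$ random paths does not give connected branch sets: nothing joins the seeds of a group, or the path segments hanging off different seeds. Also, each branch set meets $h-1$ paths, so a collision event shares randomness with $\Theta(h^3)$ others. The congestion bound only makes its probability $O(1/h^2)$, so $\mathrm e\,p\,(d+1)\leq1$ fails.

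The missing idea is Korhonen--Lokshtanov's \cref{lem:clique-almost-embedding}. You almost-embed a graph $H$ of maximum degree three with $|E(H)|\leq3h^2$. Only non-incident edges need disjoint paths, and any almost-embedding already forces a $K_h$ minor. Each event then depends on at most $12|E(H)|$ others, the local lemma closes with absolute constants, and you get the sharp bound $\con_2(G)\gtrsim n^2/h$ with no logarithms.

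The $\log^2h$ in the proposition comes from the padded-decomposition embedding step $\lambda_2^*(G)\lesssim n^3\log^2h/\overline s_2(G)^2$ (\cref{lem:spread-to-reweighted}). Your ``known'' lossless reduction from squared spread to $\lambda_2^*$ omits this step, so assigning the two $\log h$ factors to group sizes and dependencies is misplaced.

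Finally, you redo every step with product demands $\pi(u)\pi(v)$. The paper instead proves the uniform case and transfers it by attaching $N\widehat\pi(u)-1$ leaves to each vertex $u$ (\cref{lem:vertex-splitting}). This preserves $K_h$-minor-freeness and gives $\lambda_2^*(G,\pi)\lesssim N\|\pi\|_\infty\lambda_2^*(G')$.
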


\begin{proposition}
\label{prop:minor-higher-eigenvalues}
Let $G=(V,E)$ be an $n$-vertex $K_h$-minor-free graph for $h\geq3$.
Every positive probability distribution $\pi$ on $V$ satisfies
\[
\lambda_k^*(G,\pi)
\lesssim
h^2k\log^3h\,\|\pi\|_\infty
\]
for $3\leq k\leq n$.
\end{proposition}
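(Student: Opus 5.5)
The plan is to follow the higher-eigenvalue framework of Kelner, Lee, Price, and Teng, in the reweighted form due to Tung, and to feed it the improved congestion bound that also drives \cref{prop:minor-lambda-two}.

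\textbf{Reduction to a spreading metric.} By \cref{obs:reweighted-weighted-laplacian}, it suffices to take an arbitrary admissible weighting $w$ and show that $\lambda_k\left(\Pi^{-1/2}L_w\Pi^{-1/2}\right)\lesssim h^2k\log^3h\,\|\pi\|_\infty$. The dual of the reweighted $k$th eigenvalue program, as used by Tung, produces a vertex-weight function $y$ (equivalently, an induced shortest-path metric $d_y$ on $G$). This metric is \emph{spread} with respect to $\pi$: every ball of small radius carries at most a $1/k$ fraction of the $\pi$-mass. The quantity to bound is then $\sum_v y(v)^2$ relative to the spreading constraint.

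\textbf{Flow and congestion.} By duality, a lower bound on the spreading of $d_y$ converts into an upper bound on the $L_2$ vertex congestion of a multicommodity flow. This flow routes demand $\pi(u)\pi(v)$ between pairs of vertices that are far apart in $d_y$. I will invoke the same optimal congestion bound proved for \cref{prop:minor-lambda-two}, namely the Korhonen--Lokshtanov-style argument via the Lov\'asz local lemma. It shows that in a $K_h$-minor-free graph, any such flow has $L_2$ congestion at least on the order of $\frac{1}{h\log h\sqrt{\|\pi\|_\infty}}$ times the total demand, scaled appropriately. This step contributes the factor $h^2\log^2h$.

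\textbf{From the spread metric to $k$ test functions.} Next, I use padded (Klein--Plotkin--Rao style) decompositions of $K_h$-minor-free graphs. In Tung's weighted version, these apply to the metric $d_y$, with padding parameter $O(\log h)$ (via Abraham et al.) instead of $O(h^2)$. The decomposition yields $k$ disjointly supported regions, each of $\pi$-mass $\gtrsim 1/k$. Each region carries a bump function that is Lipschitz with respect to $d_y$ at scale comparable to the spreading radius. Plugging these $k$ disjointly supported functions into the Courant--Fischer characterization bounds $\lambda_k$. Each bump's Rayleigh quotient is at most $O(\log h)$ times the padding loss, times $k$ from the mass per region, times the $h^2\log^2h$ congestion factor. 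The total is $h^2k\log^3h\,\|\pi\|_\infty$.

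\textbf{Main obstacle.} The hard part is making the congestion bound from \cref{prop:minor-lambda-two} work for \emph{restricted} demand. For $k\ge3$, the flow only routes between $d_y$-far pairs, and the spreading constraint is at scale $1/k$ rather than $1/2$. I will first try to apply the local-lemma argument directly to the far-pair demand graph, using the fact that a spread metric leaves an $\Omega(1)$ fraction of demand on far pairs. This costs only constants, so that the extra $\log h$ over the $\lambda_2$ bound comes solely from the padded decomposition.
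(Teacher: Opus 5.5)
\textbf{There is a genuine gap, and it is the step you flag as the main obstacle.} Nothing shows that the Lov\'asz-local-lemma bound of \cref{prop:minor-congestion} survives the passage to the demands arising in the $k$th-eigenvalue dual, and your route depends on it.

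That argument uses the complete uniform demand in an essential way. The images $\phi(x)$ are independent and uniform, \emph{every} pair $(\phi(x),\phi(y))$ has a flow path to sample, and the hitting probability is exactly $(2c_F(v)+1)/n^2$. Now suppose flow is routed only between $d_y$-far pairs. Then each edge of $H$ lands on a close pair with constant probability, and in that case there is no path to sample. This cannot be a bad event of the local lemma, since its probability is not small. Conditioning on all $\Theta(h^2)$ edges landing on far pairs destroys the independence structure, and that event can have exponentially small (even zero) probability. Other non-complete demands, such as mixtures of uniform demands on subsets, have the same problem. Sampling the subset first correlates all edges, and a small $\|\sum_S\mu_Sc_{F_S}\|_2$ does not make any individual $\|c_{F_S}\|_2$ small. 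So ``this costs only constants'' is an unproved claim, not a step.

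Your factor bookkeeping also does not hold together. The local-lemma bound $\con_2\gtrsim n^2/h$ contributes $h^2$, not $h^2\log^2h$. The padding parameter $\alpha=O(\log h)$ enters squared, not linearly, because the bump functions have Lipschitz constant of order $\alpha/R$. If your obstacle were overcome, you would be on track for $h^2k\log^2h$, which is strictly stronger than the statement. The paper does not claim that, which signals that the extension is not routine.

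\textbf{The paper's route avoids this entirely} and does not use \cref{prop:minor-congestion} for $k\geq3$.
\begin{itemize}
\item If $\|\pi\|_\infty>1/k$, the bound is trivial from $\lambda_k^*(G,\pi)\leq2$. Your construction of $k$ regions of $\pi$-mass $\gtrsim1/k$ tacitly needs to exclude this case too.
\item Otherwise, it runs Tung's reweighted higher-eigenvalue machinery \cite[Lemmas~7.4.6 and~7.4.8, proof of Theorem~7.1.8]{tung2025reweighted} unchanged. The only change is that the $O(h^2)$-padded decompositions are replaced by the $O(\log h)$-padded decompositions of \cite[Theorem~2]{conroy2025protect}.
\item The congestion input stays the older Biswal--Lee--Rao-type bound, worth $(h\sqrt{\log h})^2=h^2\log h$. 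Tung's $h^6k\log h$ is this times $\alpha^2=h^4$.
\item With $\alpha^2=\log^2h$ one gets exactly $h^2k\log^3h$.
\end{itemize}
That is where the third logarithm comes from, and why the $k\geq3$ bound is weaker than the $k=2$ one. To repair your proof, drop the attempt to reuse the new congestion bound and plug the improved padded decompositions into Tung's existing argument.
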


It turns out that the $k>2$ case follows from combining existing results.
We give only a proof outline, since the steps are well-presented
in the corresponding material,
and require a number of long definitions.

\begin{proof}[Proof outline for \cref{prop:minor-higher-eigenvalues}]
If $\|\pi\|_\infty>1/k$,
the result follows from $\lambda_k^*(G,\pi)\leq2$.
Otherwise,
we can apply a result of Tung~\cite[Lemma~7.4.6]{tung2025reweighted}.
This requires a generalized form of a structure called a ``padded decomposition'',
which Tung showed can be derived from the more standard form of the same structure
for $K_h$-minor-free graphs~\cite[Lemma~7.4.8]{tung2025reweighted}.
The result then follows by combining Tung's construction
with a recent improvement for this structure in
its standard form~\cite[Theorem~2]{conroy2025protect},
and applying
the argument in
\cite[proof of Theorem~7.1.8]{tung2025reweighted},
which puts these ingredients together.
\end{proof}

Hence, the remainder of this section is dedicated to proving
\cref{prop:minor-lambda-two}.
Our proof will combine a few different existing techniques,
which are summarized by
\cref{fig:minor-proof-architecture}.
Note that several of the intermediate steps visualized
have already been handled by intermediate work.
In particular, we will jump straight from pairwise metric spread to $\lambda_2^*(G)$.

We start by considering the uniform $\pi$-weight case.
The key ingredient we will use is
a generalization of an argument of
Korhonen and Lokshtanov for an $L_\infty$-congestion multicommodity flow problem
to an $L_2$-congestion multicommodity flow problem.
This will allow us to get a bound on this congestion problem.
From here, we follow a line of reductions
initially laid out by Biswal, Lee, and Rao~\cite{biswal2008eigenvalue,biswal2010eigenvalue},
and extended by Tung~\cite{tung2025reweighted} and Spalding-Jamieson~\cite{spalding2025reweighted}.

In order to handle arbitrary weights $\pi$ over the vertices,
Tung used a more general form of these individual steps.
We take a simpler approach, showing that a transformation of our graph allows us to reduce
the arbitrary-weight case to the uniform-weight case, in a manner analogous to
some transformations used by Spalding-Jamieson for genus-$g$ graphs.

\begin{figure}[H]
\centering
\resizebox{\textwidth}{!}{%
\begin{tikzpicture}[
  >={Stealth[length=2.1mm,width=1.45mm]},
  every node/.style={
    font=\scriptsize,
    text=figureink,
    align=center
  },
  card/.style={
    rounded corners=4pt,
    line width=0.9pt,
    minimum height=1.05cm,
    inner sep=5pt
  },
  routingquantity/.style={
    card,
    draw=routeorange!88!black,
    fill=routeorange!12
  },
  metricquantity/.style={
    card,
    draw=metricteal!82!black,
    fill=metricteal!8
  },
  onedquantity/.style={
    card,
    draw=localpurple!82!black,
    fill=localpurple!8
  },
  spectralquantity/.style={
    card,
    draw=resultblue!88!black,
    fill=resultblue!9
  },
  proofarrow/.style={
    ->,
    draw=figureink!68,
    line width=0.9pt
  },
  dualarrow/.style={
    <->,
    draw=figureink!68,
    line width=0.9pt
  },
  reductionarrow/.style={
    proofarrow,
    dashed
  },
  method/.style={
    font=\scriptsize\itshape,
    inner sep=0pt,
    text=figureink!82
  }
]
  \path[use as bounding box] (-10,-0.55) rectangle (10,7.55);

  \draw[
    rounded corners=5pt,
    draw=figurepanel!95,
    fill=figurepanel!18,
    line width=0.8pt
  ] (-9.75,-0.3) rectangle (9.75,7.3);
  \node[
    font=\scriptsize\bfseries,
    anchor=west
  ] at (-9.45,6.95) {
    Second eigenvalue
  };

  \node[
    routingquantity,
    text width=3.8cm
  ] (allpairs) at (-7.5,5.3) {
    all-pairs $\ell_2$ congestion\\
    $\gtrsim n^2/h$
  };
  \node[
    metricquantity,
    text width=3.8cm
  ] (pairmetric) at (-7.5,3.1) {
    pairwise metric spread\\
    $\gtrsim n^2/h$
  };
  \node[
    metricquantity,
    text width=4cm
  ] (squaredmetric) at (-1.5,3.1) {
    squared metric spread\\
    $\gtrsim n^2/h^2$
  };
  \node[
    onedquantity,
    text width=3.6cm
  ] (onedspread) at (-1.5,0.65) {
    one-dimensional squared spread\\
    $\gtrsim n^2/(h^2\log^2h)$
  };
  \node[
    onedquantity,
    text width=3.4cm
  ] (onedcover) at (4,0.65) {
    one-dimensional dual form\\
    $\lesssim h^2\log^2h/n$
  };
  \node[
    spectralquantity,
    text width=3.8cm
  ] (fullcover) at (4,3.1) {
    full-dimensional dual form\\
    $\lesssim h^2\log^2h/n$
  };
  \node[
    spectralquantity,
    text width=2cm,
    font=\scriptsize\bfseries
  ] (uniformbound) at (4,5.3) {
    $\lambda_2^*(G)$\\
    $\lesssim h^2\log^2h/n$
  };
  \node[
    spectralquantity,
    text width=2.6cm,
    font=\scriptsize\bfseries
  ] (weightedbound) at (8.15,5.3) {
    $\lambda_2^*(G,\pi)$\\
    $\lesssim h^2\log^2h\|\pi\|_\infty$
  };

  \draw[dualarrow] (allpairs)--node[
    method,
    right=5pt
  ] {
    strong duality
  } (pairmetric);
  \draw[proofarrow] (pairmetric)--node[
    method,
    above=5pt
  ] {
    Cauchy-\\Schwarz
  } (squaredmetric);
  \draw[proofarrow] (squaredmetric)--node[
    method,
    right=5pt
  ] {
    padded decompositions
  } (onedspread);
  \draw[dualarrow] (onedspread)--node[
    method,
    above=5pt
  ] {
    reciprocal\\identity
  } (onedcover);
  \draw[proofarrow] (onedcover)--node[
    method,
    left=5pt
  ] {
    relaxation
  } (fullcover);
  \draw[dualarrow] (fullcover)--node[
    method,
    left=5pt
  ] {
    strong duality
  } (uniformbound);
  \draw[reductionarrow]
    (uniformbound)--node[
      method,
      above=5pt
    ] {
      vertex\\splitting
    } (weightedbound);
\end{tikzpicture}
}
\caption{
Proof architecture for an $n$-vertex $K_h$-minor-free graph.
Many of these steps are performed in existing work~\cite{biswal2008eigenvalue,biswal2010eigenvalue,spalding2025reweighted,tung2025reweighted},
and only summarized in ours.
}
\label{fig:minor-proof-architecture}
\end{figure}

\begin{definition}[\cite{biswal2008eigenvalue,biswal2010eigenvalue}]
Let $G=(V,E)$ be a graph.
A \defn{unit $K_V$-flow} is a multicommodity flow
$F=(F^{uv})_{\{u,v\}\in\binom V2}$ in $G$,
where each $F^{uv}$ is a unit $u$--$v$ flow.
Writing $F^{uv}(x)$ for the amount of
this flow through a vertex $x\in V$,
define the congestion of $F$ at a vertex $x\in V$ as
\[
c_F(x)
=
\sum_{(u,v)\in\binom V2}F^{uv}(x).
\]
We define the \defn{$\ell_2$-congestion} of $F$
as
\[
\con_2(F)
=
\left(\sum_{x\in V}c_F(x)^2\right)^{1/2}.
\]
Finally,
define the
\defn{extremal $\ell_2$-congestion} of $G$
as the minimum of $\con_2(F)$ over all unit $K_V$-flows $F$ in $G$.
We denote this minimum by $\con_2(G)$.
\end{definition}

Our key contribution is the following result.

\begin{proposition}
\label{prop:minor-congestion}
Let $G=(V,E)$ be an $n$-vertex $K_h$-minor-free graph with $n\geq2$.
Then
\[
\con_2(G)
\gtrsim
\frac{n^2}{h}.
\]
\end{proposition}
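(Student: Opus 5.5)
The plan is to argue by contraposition, extending the Korhonen--Lokshtanov local-lemma argument: if $G$ carries a unit $K_V$-flow $F$ with $\con_2(F)\le\varepsilon n^2/h$ for a small absolute constant $\varepsilon$, I will exhibit a $K_h$ minor.

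\textbf{Trivial regime.} Every vertex $x$ is an endpoint of $n-1$ commodities, so $c_F(x)\ge n-1$. Hence $\con_2(F)\ge (n-1)\sqrt n\gtrsim n^2/h$ whenever $h\gtrsim\sqrt n$. So I may assume $h\le c\sqrt n$, which leaves room to sample $\Theta(h)$ vertices with negligible coincidences.

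\textbf{Setup.} Decompose each $F^{uv}$ into a probability distribution over simple $u$--$v$ paths, so that $\Pr[x\in P^{uv}]=F^{uv}(x)$. For a uniformly random pair $\{u,v\}$,
\[
\Pr[x\in P]=\frac{c_F(x)}{\binom n2}.
\]
Consequently, two independent random paths, each between independently chosen random pairs, meet with probability at most
\[
\sum_x \frac{c_F(x)^2}{\binom n2^2}\lesssim \frac{\con_2(F)^2}{n^4}\le\frac{\varepsilon^2}{h^2}.
\]
This is exactly where the $\ell_2$ norm, rather than the $\ell_\infty$ congestion in Korhonen--Lokshtanov, enters. I also treat \emph{heavy} vertices, those with $c_F(x)\ge Kn^2/h$, separately. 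By Markov applied to $\sum_x c_F(x)^2$ there are at most $\varepsilon^2h/K^2$ of them, and I intend to discard them from the sample space by never choosing them as terminals.

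\textbf{Construction.} Pick $h$ groups $S_1,\dots,S_h$, each of $m$ independent uniformly random vertices, where $m$ is a large constant. For every index pair $\{i,j\}$ choose a random terminal pair $(s,t)\in S_i\times S_j$ and a random path $P_{ij}\sim F^{st}$, split at its middle edge. Branch set $B_i$ is $S_i$ together with the halves of all $P_{ij}$ nearest $S_i$, plus some connecting structure inside $B_i$. Connectivity of $B_i$ is the delicate point: the $m$ terminals of $S_i$ must be joined, so I would also sample intra-group paths $P_{i}^{(a,b)}$ from the flow, forming a random spanning tree on $S_i$. The minor is valid provided no path assigned to $B_i$ meets one assigned to $B_j$ for $i\ne j$.

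The bad events are $A_{\alpha\beta}=\{$paths $\alpha,\beta$ of different branch sets intersect$\}$, each of probability $O(\varepsilon^2/h^2)$ by the computation above. Each event depends only on the paths incident to at most four groups, so it shares variables with $O(h^2)$ other events. Since
\[
e\cdot O(\varepsilon^2/h^2)\cdot O(h^2)<1
\]
for small $\varepsilon$, the symmetric Lovász local lemma gives positive probability that no bad event occurs, hence a $K_h$ minor. This contradiction yields $\con_2(G)\gtrsim n^2/h$.

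\textbf{Main obstacle.} The dependency count is the hardest step. A naive scheme with a single terminal per branch set, and events indexed by pairs of paths, gives each event about $h^3$ neighbours. That only yields $n^2/h^{3/2}$ via the local lemma, or $n^2/h^2$ via a union bound. To reach $1/h$, the variables must be arranged so that each bad event depends on only $O(h^2)$ others. I would try to do this by resampling at the level of whole groups, i.e.\ making each group's terminals and incident half-paths a single variable block. This forces the probability of an event to be bounded through the \emph{sum} of collisions of a path with all $O(h)$ paths of another group, which is $O(\varepsilon^2/h)$ per group pair. Checking that this bookkeeping (together with intra-group connectivity and the exclusion of heavy vertices) closes with the right exponents is where I expect the real work to lie.
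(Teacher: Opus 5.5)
There is a genuine gap, and it is the step you flag as the main obstacle: your construction does not reach the exponent balance the local lemma needs. With $K_h$ itself as the pattern, each branch set must emit $h-1$ cross paths. With $m=O(1)$ terminals per group, these paths share the group's terminal variables, and by pigeonhole many even share a terminal. So an event about $P_{ij}$ and $P_{kl}$ shares variables with every event involving any of the $\Theta(h)$ paths at groups $i,j,k,l$. That is $\Theta(h)\cdot\Theta(h^2)=\Theta(h^3)$ events, not the $O(h^2)$ claimed in your construction paragraph.

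Your suggested repair does not help. Grouping raises the event probability to $\Theta(\varepsilon^2/h)$, while each event still meets $\Theta(h^2)$ others. The product $pD$ therefore stays $\Theta(\varepsilon^2 h)$, and you are back at $n^2/h^{3/2}$. Moreover, a ``group block'' is not an independent variable, since each cross path depends on the terminals of both of its groups.

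There is also an uncontrolled event type. When $P_{ij}$ and $P_{ik}$ share a terminal in $S_i$, their $B_j$- and $B_k$-halves must be disjoint. These two paths are not independent, so your $\ell_2$ collision bound says nothing about that event.

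The missing idea, which the paper takes from Korhonen and Lokshtanov, is to replace $K_h$ by a bounded-degree pattern. This is a graph $H$ with minimum degree at least $2$, maximum degree at most $3$ and $|E(H)|\le 3h^2$. Morally, each vertex of $K_h$ is blown up into a path of $\Theta(h)$ vertices, each serving $O(1)$ subdivided cross edges. Its key property is that any \emph{almost-embedding} of $H$ into $G$ already forces a $K_h$ minor. Here vertices go to vertices and edges to paths, with disjointness required only for non-incident edges.

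The proof then runs as follows.
\begin{itemize}
\item Map every vertex of $H$ independently to a uniform vertex of $G$, and every edge independently to a path of the flow decomposition. Use a one-vertex path if the endpoints coincide; this removes the need for your heavy-vertex exclusion.
\item Index bad events by pairs of non-incident edges of $H$. By independence, each has probability at most $\sum_v q(v)^2$, where $q(v)=(2c_F(v)+1)/n^2$ is the probability that one sampled edge-path contains $v$. This is $\lesssim(\varepsilon/h+n^{-3/2})^2$.
\item At most $12$ edges of $H$ touch the endpoints of a given pair, so each event depends on at most $12|E(H)|=O(h^2)$ others.
\item Hence $pD=O(\varepsilon^2)$, and the symmetric local lemma closes.
\end{itemize}
In your language, this means taking $\Theta(h)$ terminals per group, each used by $O(1)$ cross paths, joined by a bounded-degree intra-group structure. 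You would then still have to show that overlaps between paths of incident edges are harmless. That is exactly the content of the Korhonen--Lokshtanov lemma, which you would need to reprove or cite.
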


As previously mentioned,
the proof of this result will be similar to methods of Korhonen and Lokshtanov~\cite{korhonen2024induced}.
Combined with a number of existing results,
this will allow us to attain a bound on $\lambda_2^*(G)$.
We defer the proof to
\cref{subsec:minor-congestion}.
Instead, before proving
\cref{prop:minor-congestion},
we will lay out the framework for these existing results,
and also show how to get a bound on $\lambda_2^*(G,\pi)$
for any appropriate weighting $\pi$.

\begin{definition}[\cite{biswal2008eigenvalue,biswal2010eigenvalue}]
Let $G=(V,E)$ be a graph.
For $w:V\to\RR_{\geq0}$,
let $d_w$ be the vertex-weighted shortest-path semimetric in which each path
has length equal to the sum of its vertex weights.
The \defn{$\ell_2$-spread} of $G$ is
\[
\overline s_2(G)
=
\max_{\substack{w:V\to\RR_{\geq0}\\w\neq0}}
\frac{
  \sum_{u,v\in V}d_w(u,v)
}{
  \sqrt{\sum_{v\in V}w(v)^2}
}.
\]
\end{definition}

\begin{lemma}[{\cite[Theorem~2.2]{biswal2010eigenvalue}}]
\label{lem:congestion-spread-duality}
For every graph $G$,
\[
\con_2(G)
=
\overline s_2(G).
\]
\end{lemma}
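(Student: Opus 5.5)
The plan is to prove the equality as a min–max duality: weak duality by a path-decomposition argument, and strong duality by a convex minimax theorem. Throughout, we read $d_w(u,v)$ as the minimum $w$-weight of a $u$--$v$ path, counting both endpoints. We also read the sum $\sum_{u,v}$ in $\overline s_2(G)$ as ranging over the same unordered pairs $\binom V2$ indexing the commodities. If ordered pairs are intended, both sides change by the same bookkeeping factor of $2$, and it should be fixed consistently with the convention of \cite{biswal2010eigenvalue}. Getting this convention right is the one place where care is needed, since otherwise a stray factor of $2$ appears.

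\emph{Weak duality.} Fix a unit $K_V$-flow $F$ and weights $w\geq0$ with $w\neq0$. Each unit $u$--$v$ flow $F^{uv}$ decomposes into a convex combination of simple $u$--$v$ paths, after discarding cycles, which only lowers vertex throughput. Every such path $P$ has $\sum_{x\in P}w(x)\geq d_w(u,v)$. Since $F^{uv}(x)$ is the total weight of paths through $x$, we get
\[
d_w(u,v)\le\sum_{x\in V}F^{uv}(x)\,w(x).
\]
Summing over pairs and applying Cauchy--Schwarz gives
\[
\sum_{\{u,v\}}d_w(u,v)\le\sum_{x}c_F(x)\,w(x)\le\con_2(F)\,\|w\|_2.
\]
Hence $\overline s_2(G)\le\con_2(G)$.

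\emph{Strong duality.} We parametrize flows by path weights. For each pair, take a probability vector $\lambda^{uv}$ on the finitely many simple $u$--$v$ paths. The feasible set $\mathcal F$ is then a product of simplices, which is compact and convex. Moreover, $c_F(x)=\sum_{uv}\sum_{P\ni x}\lambda^{uv}_P$ is linear in $\lambda$. Let $B=\{w\ge0:\|w\|_2\le1\}$, which is also compact and convex. Since $c_F\ge0$, we have
\[
\con_2(F)=\max_{w\in B}\langle w,c_F\rangle,
\]
because the maximizing direction $c_F/\|c_F\|$ is nonnegative. The payoff $\langle w,c_F\rangle$ is bilinear in $(w,\lambda)$. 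Sion's (or von Neumann's) minimax theorem therefore yields
\[
\con_2(G)=\min_{\mathcal F}\max_{B}\langle w,c_F\rangle=\max_{B}\min_{\mathcal F}\langle w,c_F\rangle.
\]

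For fixed $w$, the inner minimum decouples across pairs: each commodity is routed entirely on a shortest $w$-path. The inner value is therefore exactly $\sum_{\{u,v\}}d_w(u,v)$. Taking the maximum over $w\in B$, and noting that $\sum d_w/\|w\|_2$ is scale-invariant so the restriction $\|w\|_2\le1$ loses nothing, gives $\con_2(G)\le\overline s_2(G)$. The case $w=0$ contributes $0$ and is irrelevant.

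Beyond the pair-counting convention, the only technical point is justifying the minimax exchange. Compactness of $\mathcal F$ follows from restricting to simple paths, which is justified by the cycle-cancelling remark in the weak-duality step. Since everything is finite-dimensional and bilinear, no further obstacle is expected.
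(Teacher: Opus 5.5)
Your proof is correct. The paper gives no argument of its own for this lemma; it imports the identity from Biswal, Lee, and Rao, where it is a min--max duality statement. Your write-up is the standard self-contained proof of that duality:
\begin{itemize}
\item weak duality via path decompositions and Cauchy--Schwarz;
\item Sion's theorem for the bilinear payoff $\langle w,c_F\rangle$ on $\mathcal F\times B$;
\item the inner minimum collapsing to shortest $w$-paths;
\item homogeneity of $d_w$ removing the norm constraint.
\end{itemize}
So there is no genuine difference in route. The weak-duality half is in fact subsumed by the minimax equality, apart from its role in justifying the restriction to simple paths.

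What your version adds over the citation is that it makes the pair bookkeeping explicit, and your caution there is warranted. Under the paper's literal definitions, $c_F$ sums over unordered pairs $\binom V2$, but $\overline s_2(G)$ sums over all ordered pairs $u,v\in V$ with $d_w(u,u)=0$. With those conventions your argument gives $\overline s_2(G)=2\con_2(G)$, not equality. This discrepancy is harmless for the paper, which only uses $\overline s_2(G)\gtrsim n^2/h$.
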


From here, the frameworks of
Biswal, Lee, and Rao~\cite{biswal2008eigenvalue,biswal2010eigenvalue}
compared to those of Spalding-Jamieson~\cite{spalding2025reweighted} and Tung~\cite{tung2025reweighted}
diverge,
since they aim to bound different values.
However, both make use of structures called ``padded decompositions'' to perform a metric embedding step
in a graph-class-agnostic way.
The details of these structures are unimportant to our work,
so we simply state an immediate consequence of these methods for minor-free graphs
instead.

\begin{lemma}[{\cite[Proposition~2.15]{spalding2025reweighted}%
    +\cite[Theorem~2]{conroy2025protect}}]
\label{lem:spread-to-reweighted}
Every $n$-vertex $K_h$-minor-free graph $G$ satisfies
\[
\lambda_2^*(G)
\lesssim
\frac{n^3\log^2h}{\overline s_2(G)^2}.
\]
\end{lemma}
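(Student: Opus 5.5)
The plan is to run the chain in \cref{fig:minor-proof-architecture} from the spread down to the dual form of \cref{obs:lambda-2-dual}. The only class-specific input is a padded decomposition for $K_h$-minor-free graphs with padding parameter $\alpha\lesssim\log h$, taken from \cite[Theorem~2]{conroy2025protect}. This is exactly the input that \cite[Proposition~2.15]{spalding2025reweighted} consumes to produce a bound of the form $\lambda_2^*(G)\lesssim n^3\alpha^2/\overline s_2(G)^2$. So the lemma should follow by substituting $\alpha\lesssim\log h$, and I would prove it by re-deriving that proposition explicitly as follows.

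\emph{Setup.} Fix a maximizer $w$ in the definition of $\overline s_2(G)$, normalized so that $\sum_v w(v)^2=1$. Then $\sum_{u,v}d_w(u,v)=\overline s_2(G)$. By Cauchy--Schwarz over the $n^2$ ordered pairs,
\[
\sum_{u,v}d_w(u,v)^2\ \ge\ \frac{\overline s_2(G)^2}{n^2}.
\]

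\emph{One-dimensional embedding (main step).} Next, use the padded decomposition of the vertex-weighted shortest-path metric $d_w$. Vertex weights do not leave the $K_h$-minor-free class, since they can be simulated by edge lengths or by subdividing. From the decomposition I would build, in the standard Rabinovich/Biswal--Lee--Rao way, a map $f:V\to\RR$ with two properties. First, $f$ is $1$-Lipschitz with respect to $d_w$. Second,
\[
\sum_{u,v}\bigl(f(u)-f(v)\bigr)^2\ \gtrsim\ \frac{1}{\alpha^2}\sum_{u,v}d_w(u,v)^2 .
\]
The construction is: pick a scale $\Delta$ at which a constant fraction of the squared spread lives. Take a random $\Delta$-bounded padded partition, randomly mark clusters, and let $f$ be the truncated distance to the union of marked clusters. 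Pairs at distance about $\Delta$ are separated with constant probability, and padding guarantees a gap of order $\Delta/\alpha$. This is the step I expect to be the main obstacle. Squared distances must be handled across multiple scales, typically by bucketing pairs dyadically and using an averaging argument to pick one good scale, and this must be done without losing more than the $\alpha^2$ factor. This is precisely the content of the padded-decomposition step in \cite{spalding2025reweighted}, and I would follow it.

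\emph{Feasible dual solution.} Finally, build a solution for \cref{obs:lambda-2-dual}. The path length of an edge $uv$ under $d_w$ counts both endpoints, so $|f(u)-f(v)|\le w(u)+w(v)$. Hence
\[
\bigl(f(u)-f(v)\bigr)^2\le 2w(u)^2+2w(v)^2 ,
\]
and $y=2w^2$ satisfies the edge constraints with $\sum_v y(v)=2$. Center $f$; this does not affect differences. The identity $\sum_v f(v)^2=\frac1{2n}\sum_{u,v}(f(u)-f(v))^2$ then gives
\[
\sum_v f(v)^2\ \gtrsim\ \frac{\overline s_2(G)^2}{n^3\alpha^2}.
\]
Rescale $f$ to unit norm, scaling $y$ by the square of the same factor, and embed $f$ in one coordinate of $\RR^V$. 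This yields a feasible pair with objective $\lesssim n^3\alpha^2/\overline s_2(G)^2$. With $\alpha\lesssim\log h$, this gives $\lambda_2^*(G)\lesssim n^3\log^2h/\overline s_2(G)^2$, as claimed.
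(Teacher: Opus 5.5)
Your proposal is correct and follows the paper's route. The paper proves this lemma purely by citation: it feeds the $O(\log h)$ padded decompositions of \cite[Theorem~2]{conroy2025protect} into \cite[Proposition~2.15]{spalding2025reweighted}. That is exactly the chain in \cref{fig:minor-proof-architecture} that you re-derive (Cauchy--Schwarz, a padded-decomposition line embedding, then a feasible pair for \cref{obs:lambda-2-dual}), and your final dual computation with $y=2w^2$ is right.

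One caution on the step you deferred. A single dyadic scale carrying a constant fraction of $\sum_{u,v}d_w(u,v)^2$ need not exist, since the squared spread can be spread evenly over $\Theta(\log n)$ scales, so picking one good scale can lose a logarithmic factor. Instead, take $R$ minimal such that some ball $B$ of radius $R$ contains $n/2$ vertices. If $\sum_v d_w(v,B)^2\gtrsim nR^2$, use $f=d_w(\cdot,B)$. Otherwise $\sum_{u,v}d_w(u,v)^2\lesssim n^2R^2$ and every ball of radius below $R$ holds fewer than $n/2$ vertices, so a single padded decomposition at scale about $R$ suffices. Alternatively, apply Rabinovich's $\ell_1$ average-distortion embedding to $\sum_{u,v}d_w(u,v)$ and use Cauchy--Schwarz afterwards.
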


This is enough to prove the uniform case of our desired bound.

\begin{corollary}
\label{cor:minor-uniform-lambda-two}
Every $n$-vertex $K_h$-minor-free graph $G$ satisfies
\[
\lambda_2^*(G)
\lesssim
\frac{h^2\log^2h}{n}.
\]
\end{corollary}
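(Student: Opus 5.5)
The corollary should follow by chaining the three preceding results. The plan is to bound $\overline s_2(G)$ from below and substitute that bound into \cref{lem:spread-to-reweighted}.

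First, dispose of the trivial case $n=1$. There $\lambda_2^*$ is not even defined, or the statement is vacuous. We may also use $\lambda_2^*(G)\le 2$ whenever convenient for small $n$, so assume $n\ge2$.

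By \cref{prop:minor-congestion}, $\con_2(G)\gtrsim n^2/h$. By the duality of \cref{lem:congestion-spread-duality}, this gives $\overline s_2(G)=\con_2(G)\gtrsim n^2/h$, and hence $\overline s_2(G)^2\gtrsim n^4/h^2$. Plugging this into \cref{lem:spread-to-reweighted} yields
\[
\lambda_2^*(G)\lesssim \frac{n^3\log^2 h}{\overline s_2(G)^2}\lesssim \frac{n^3\log^2h\cdot h^2}{n^4}=\frac{h^2\log^2h}{n}.
\]

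The only point needing care is that \cref{lem:spread-to-reweighted} is stated for $K_h$-minor-free graphs with the $\log h$ factor coming from padded decompositions. I would check that its hypotheses, namely connectivity and $h\ge3$ so that $\log h>0$, match ours. If connectivity is required and $G$ is disconnected, I would add edges between components: adding a single edge connecting two components cannot create a $K_h$ minor for $h\ge3$, because a $K_h$ model is 2-connected-like and would have to live in one block. Adding edges only increases $\lambda_2^*$, so this reduction is safe.

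There is no real obstacle here beyond bookkeeping. All the substantive work lies in \cref{prop:minor-congestion}, which is deferred.
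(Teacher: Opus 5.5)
Your proposal is correct and is exactly the paper's argument: \cref{prop:minor-congestion} together with \cref{lem:congestion-spread-duality} gives $\overline s_2(G)\gtrsim n^2/h$, and substituting this into \cref{lem:spread-to-reweighted} yields $h^2\log^2h/n$. Your extra reduction to the connected case is also sound, since adding edges only enlarges the feasible set and a bridge cannot create a $K_h$ minor, but the paper does not need it because it assumes connected graphs throughout.
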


\begin{proof}
By \cref{prop:minor-congestion,lem:congestion-spread-duality},
$\overline s_2(G)\gtrsim n^2/h$.
Now apply \cref{lem:spread-to-reweighted}.
\end{proof}

To extend this to the full bound, we devise a new transfer lemma.

\begin{lemma}
\label{lem:vertex-splitting}
Let $G=(V,E)$ be a graph,
let $\pi$ be a positive probability distribution on $V$,
and let $2\leq k\leq|V|$.
There is an $N$-vertex graph $G'$,
for some positive integer $N$,
such that
\[
\lambda_k^*(G,\pi)
\lesssim
N\|\pi\|_\infty\lambda_k^*(G').
\]
Moreover,
$G'$ is $K_h$-minor-free whenever $G$ is $K_h$-minor-free and $h\geq3$.
\end{lemma}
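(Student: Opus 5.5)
The plan is to split each vertex $v$ of $G$ into $s_v\approx N\pi(v)$ copies by attaching pendant leaves, so that the uniform distribution on the new graph imitates $\pi$. I will work with the Rayleigh-quotient formulation from the proof of \cref{prop:degree-reduction}: $\lambda_k^*(G,\pi)=\max_w\lambda_k(w,\pi)$ over admissible $w$. First I treat positive rational $\pi$, and fix $N$ with $s_v=N\pi(v)\in\mathbb N$ for all $v$. Let $G'$ be $G$ with $s_v-1$ pendant leaves attached to each $v$, so $|V(G')|=\sum_v s_v=N$. General $\pi$ then follows by approximation and continuity of eigenvalues, exactly as at the end of the proof of \cref{prop:degree-reduction}. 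One point to check there is that the final bound is uniform in $N$: the constant must not depend on which $N$ the approximation produces, and the argument below gives such a bound.

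Minor-freeness is the easy part. For $h\geq3$, any $K_h$-minor model in $G'$ can be modified to avoid leaves. A branch set consisting of a single leaf has only one neighbour, so it cannot touch $h-1\geq2$ other branch sets. Any other branch set containing a leaf stays connected, and keeps all its adjacencies to other branch sets, after the leaf is deleted. Hence the model restricts to a $K_h$-minor model in $G$, so $G'$ is $K_h$-minor-free whenever $G$ is.

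For the eigenvalue comparison, fix an admissible $w$ for $(G,\pi)$ and set $\lambda=\lambda_k(w,\pi)\leq2$. I build a weighting $w'$ on $G'$ that is admissible for the uniform distribution (vertex budget $1/N$):
\begin{itemize}
\item each original edge gets $w'_e=w_e/(2N\|\pi\|_\infty)$, which uses at most $1/(2N)$ of the budget at every original vertex;
\item each pendant edge gets weight $1/(2N)$. This edge carries the leaf's entire budget. At the centre $v$ it is the more delicate term, because $v$ has $s_v-1$ such edges. I will therefore replace the star at $v$ by a pendant path, or give pendant edges weight $1/(2Ns_v)$ and absorb the loss into the constant $N\|\pi\|_\infty\ge s_v$. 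I expect this budget bookkeeping at the centres to be the main obstacle. The $N\|\pi\|_\infty$ slack in the statement is what should pay for it, since $s_v\le N\|\pi\|_\infty$.
\end{itemize}

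With this weighting, for $f:V(G')\to\RR$ let $L(f)$ denote the pendant energy. Cauchy--Schwarz on each star gives
\[
\frac1N\sum_{z}f_z^2\le 2\sum_v\pi(v)f_v^2+C\,N\|\pi\|_\infty L(f).
\]
The numerator is at least $\frac{1}{2N\|\pi\|_\infty}\sum_{uv\in E}w_{uv}(f_u-f_v)^2+L(f)$.

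To lower-bound $\lambda_k(w',\mathrm{unif})$, I use Courant--Fischer in the form that every $k$-dimensional $U\le\RR^{V(G')}$ must contain an $f$ with large quotient. There are two cases. If restriction to $V$ is not injective on $U$, some nonzero $f\in U$ vanishes on $V$. Its quotient is then at least of order $1/(N\|\pi\|_\infty)$, which is $\gtrsim\lambda/(N\|\pi\|_\infty)$ since $\lambda\le2$. Otherwise the restricted image is $k$-dimensional, and it contains $x$ with $\sum_{uv\in E} w_{uv}(x_u-x_v)^2\ge\lambda\sum_v\pi(v)x_v^2$. Taking its preimage $f$ and comparing numerator and denominator termwise shows the quotient is $\gtrsim\min\{\lambda,1\}/(N\|\pi\|_\infty)\gtrsim\lambda/(N\|\pi\|_\infty)$. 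Thus $\lambda_k^*(G')\gtrsim\lambda_k(w,\pi)/(N\|\pi\|_\infty)$, and maximizing over admissible $w$ gives the lemma.
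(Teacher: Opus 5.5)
Your proof is correct and uses the paper's construction. The graph $G'$ is the same: attach $N\pi(v)-1$ leaves to each $v$. Once you commit to the star option, the weighting is exactly the paper's $\widetilde w$: $w_e/(2N\|\pi\|_\infty)$ on original edges and $1/(2Ns_v)$ on pendant edges.

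The difference is in the spectral comparison. The paper takes the subspace attaining $\mu=\lambda_k(NL_{\widetilde w})$ and pushes it to $\RR^V$ by averaging over each cluster $C_u$. It gets injectivity from $\mu<1/(4N\|\widehat\pi\|_\infty)$, and then has to control $x_u-y_u$ with a three-term Cauchy--Schwarz and $\sum_{e\ni u}w_e\le\widehat\pi(u)$. You instead lower-bound the quotient on every $k$-dimensional subspace using plain restriction $f\mapsto f|_V$. Your non-injective case plays the role of the paper's ``$\mu$ large'' case. In the injective case the original-edge energy carries over verbatim, so only the denominator needs the star Cauchy--Schwarz. Combined with $\frac{a+b}{c+d}\ge\min\{a/c,b/d\}$, this gives $\lambda_k^*(G')\ge\lambda_k(w,\pi)/(8N\|\pi\|_\infty)$. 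That is a slightly leaner route to the same inequality. Your direct minor-model argument is the same reasoning as \cref{lem:graph-family-closure}; the paper cites Tung for this step.

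There are two things to tighten.

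\begin{itemize}
\item Drop the alternatives in your weighting. Putting $1/(2N)$ on every pendant edge breaks the budget at any centre with $s_v\ge3$. A pendant path has Dirichlet gap of order $1/s_v^2$, which would give $(N\|\pi\|_\infty)^2$ instead of $N\|\pi\|_\infty$ in your first case. With the $1/(2Ns_v)$ star, the centre check is the single line $\pi(v)/(2N\|\pi\|_\infty)+(s_v-1)/(2Ns_v)<1/N$, so it is not an obstacle.
\item The irrational case is not ``exactly as'' in \cref{prop:degree-reduction}. There the right-hand side does not depend on the approximant; here $N$ and $G'$ do, so there is no limit to take. You need one rational approximant, fixed before maximizing over $w$. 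The paper's device avoids continuity altogether. Pick a rational $\widehat\pi$ with $\frac12\pi\le\widehat\pi\le2\pi$. For any $w$ admissible for $\pi$, $w/2$ is admissible for $\widehat\pi$, and every Rayleigh quotient drops by at most a factor $4$. Hence $\lambda_k^*(G,\pi)\le4\lambda_k^*(G,\widehat\pi)$ with $\|\widehat\pi\|_\infty\le2\|\pi\|_\infty$, and your rational case applies to $\widehat\pi$.
\end{itemize}
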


To prove this, we will make use of a construction of Tung
that he called the ``constellation graph''.
However, Tung used this construction to prove an analogous transfer
lemma for extremal $L_2$-congestion,
thereby requiring essentially all of the above incremental results
to be generalized.
In contrast, our lemma instead applies directly to $\lambda_k^*$,
greatly simplifying much of this work.

\begin{proof}
Choose a positive rational probability distribution $\widehat\pi$ satisfying
$\frac12\pi(u)\leq\widehat\pi(u)\leq2\pi(u)$ for every $u\in V$.
Then
$\lambda_k^*(G,\pi)\lesssim\lambda_k^*(G,\widehat\pi)$.

Let $N$ be a common denominator of the values of $\widehat\pi$,
and construct $G'$ from $G$ by attaching $N\widehat\pi(u)-1$ leaves to each $u$.
Tung showed that, for $h\geq3$, if $G$ is $K_h$-minor-free,
then $G'$ is as well~\cite[Lemma~7.3.8]{tung2025reweighted}.
Hence, we need only show the inequality
$\lambda_k^*(G,\widehat\pi)\lesssim
N\|\widehat\pi\|_\infty\cdot\lambda_k^*(G')$.

For each $u\in V$,
let $C_u\subset V(G')$ consist of $u$ together with its added leaves,
so $|C_u|=N\widehat\pi(u)$.
Write $\widehat\Pi=\operatorname{diag}(\widehat\pi)$.
Fix an edge weighting $w$ feasible in the optimization problem in
\cref{obs:reweighted-weighted-laplacian} for $(G,\widehat\pi)$.
Define an edge weighting $\widetilde w$ on $G'$ by
\[
\begin{aligned}
\widetilde w_{uv}
&:=
\frac{w_{uv}}{2N\|\widehat\pi\|_\infty}
&& (uv\in E),\\
\widetilde w_{uz}
&:=
\frac1{2N|C_u|}
&& (u\in V, z\in C_u\setminus\{u\}).
\end{aligned}
\]
For each original vertex $u\in V$,
the sum of incident edge weights is at most
$\widehat\pi(u)/(2N\|\widehat\pi\|_\infty)
+(|C_u|-1)/(2N|C_u|)<1/N$,
while for each added vertex $z\in C_u\setminus\{u\}$,
there is a unique incident edge weight $1/(2N|C_u|)<1/N$.
Hence, $\widetilde w$ is feasible in the optimization problem in
\cref{obs:reweighted-weighted-laplacian} for $G'$ with uniform weights.
Since $w$ was arbitrary,
and $\lambda_k^*(G')\geq\lambda_k(NL_{\widetilde w})$,
it suffices to show that
\[
\lambda_k\left(\widehat\Pi^{-1/2}L_w\widehat\Pi^{-1/2}\right)
\lesssim
N\|\widehat\pi\|_\infty\cdot
\lambda_k\left(NL_{\widetilde w}\right).
\]

Let $\mu:=\lambda_k(NL_{\widetilde w})$.
Since the feasibility of $w$ gives
$\lambda_k(\widehat\Pi^{-1/2}L_w\widehat\Pi^{-1/2})\leq2$,
the desired bound holds if
$\mu\geq1/(4N\|\widehat\pi\|_\infty)$.
Assume otherwise.
By Courant-Fischer,
there is a $k$-dimensional subspace $U$ of $\RR^{V(G')}$ such that
every $y\in U$ satisfies
\[
\sum_{ab\in E(G')}\widetilde w_{ab}(y_a-y_b)^2
\leq
\frac\mu N\sum_{z\in V(G')}y_z^2.
\]

Let $y\in U$ be arbitrary.
Let $x_u$ be the mean of the values $y_z$ over $z\in C_u$,
so $x_u$ minimizes $\sum_{z\in C_u}(y_z-x_u)^2$.
Since the sets $C_u$ partition $V(G')$ and
$|C_u|=N\widehat\pi(u)$,
we have
\[
\begin{aligned}
\frac1N\sum_{z\in V(G')}y_z^2
-\sum_{u\in V}\widehat\pi(u)x_u^2
&=
\frac1N\sum_{u\in V}
\left(
\left(\sum_{z\in C_u}y_z^2\right)-|C_u|x_u^2
\right)\\
&=
\frac1N\sum_{u\in V}\sum_{z\in C_u}(y_z-x_u)^2\\
&\leq
\frac1N\sum_{u\in V}\sum_{z\in C_u\setminus\{u\}}(y_z-y_u)^2\\
&=
2\sum_{u\in V}|C_u|
\sum_{z\in C_u\setminus\{u\}}
\widetilde w_{uz}(y_u-y_z)^2\\
&\leq
2N\|\widehat\pi\|_\infty\cdot
\sum_{ab\in E(G')}\widetilde w_{ab}(y_a-y_b)^2.
\end{aligned}
\]
For each $u\in V$,
the definition of $x_u$ gives
\[
x_u-y_u
=
\frac1{|C_u|}
\sum_{z\in C_u\setminus\{u\}}(y_z-y_u).
\]
Since $\widehat\pi(u)=|C_u|/N$,
Cauchy-Schwarz gives
\[
\begin{aligned}
\widehat\pi(u)(x_u-y_u)^2
&=
\frac1{N|C_u|}
\left(
\sum_{z\in C_u\setminus\{u\}}(y_z-y_u)
\right)^2\\
&\leq
\frac1N\sum_{z\in C_u\setminus\{u\}}(y_z-y_u)^2.
\end{aligned}
\]
Summing over $u$ and using the definition of $\widetilde w$ gives
\[
\begin{aligned}
\sum_{u\in V}\widehat\pi(u)(x_u-y_u)^2
&\leq
\frac1N\sum_{u\in V}\sum_{z\in C_u\setminus\{u\}}(y_z-y_u)^2\\
&\leq
2N\|\widehat\pi\|_\infty\cdot
\sum_{u\in V}\sum_{z\in C_u\setminus\{u\}}
\widetilde w_{uz}(y_u-y_z)^2.
\end{aligned}
\]
For every $uv\in E$,
Cauchy-Schwarz gives
\[
\begin{aligned}
(x_u-x_v)^2
&=
\left((x_u-y_u)+(y_u-y_v)+(y_v-x_v)\right)^2\\
&\leq
3(x_u-y_u)^2+3(y_u-y_v)^2+3(x_v-y_v)^2.
\end{aligned}
\]
Multiplying by $w_{uv}$,
summing over $E$,
and using the feasibility of $w$ gives
\[
\begin{aligned}
\sum_{uv\in E}w_{uv}(x_u-x_v)^2
&\leq
3\sum_{uv\in E}w_{uv}(y_u-y_v)^2
+
3\sum_{u\in V}
\left(\sum_{e\ni u}w_e\right)(x_u-y_u)^2\\
&\leq
3\sum_{uv\in E}w_{uv}(y_u-y_v)^2
+
3\sum_{u\in V}\widehat\pi(u)(x_u-y_u)^2\\
&=
6N\|\widehat\pi\|_\infty\cdot
\sum_{uv\in E}\widetilde w_{uv}(y_u-y_v)^2
+
3\sum_{u\in V}\widehat\pi(u)(x_u-y_u)^2\\
&\leq
6N\|\widehat\pi\|_\infty\cdot
\left(
\sum_{uv\in E}\widetilde w_{uv}(y_u-y_v)^2
+
\sum_{u\in V}\sum_{z\in C_u\setminus\{u\}}
\widetilde w_{uz}(y_u-y_z)^2
\right)\\
&=
6N\|\widehat\pi\|_\infty\cdot
\sum_{ab\in E(G')}\widetilde w_{ab}(y_a-y_b)^2.
\end{aligned}
\]
Since $y\in U$ and $\mu<1/(4N\|\widehat\pi\|_\infty)$,
\[
\begin{aligned}
\frac1N\sum_{z\in V(G')}y_z^2
-\sum_{u\in V}\widehat\pi(u)x_u^2
&\leq
2N\|\widehat\pi\|_\infty\cdot
\sum_{ab\in E(G')}\widetilde w_{ab}(y_a-y_b)^2\\
&\leq
2N\|\widehat\pi\|_\infty\cdot\mu
\left(
\frac1N\sum_{z\in V(G')}y_z^2
\right)\\
&<
\frac12
\left(
\frac1N\sum_{z\in V(G')}y_z^2
\right).
\end{aligned}
\]
Finally,
we apply Courant-Fischer to get
\[
\begin{aligned}
\lambda_k\left(\widehat\Pi^{-1/2}L_w\widehat\Pi^{-1/2}\right)
&\leq
\max_{\substack{y\in U\\y\neq0}}
\frac{
  \sum_{uv\in E}w_{uv}(x_u-x_v)^2
}{
  \sum_{u\in V}\widehat\pi(u)x_u^2
}\\
&<
12N\|\widehat\pi\|_\infty\cdot
\max_{\substack{y\in U\\y\neq0}}
\frac{
  \sum_{ab\in E(G')}\widetilde w_{ab}(y_a-y_b)^2
}{
  \frac1N\sum_{z\in V(G')}y_z^2
}\\
&\leq
12N\|\widehat\pi\|_\infty\cdot\mu.
\end{aligned}
\]
\end{proof}

We can now combine these to prove the main result of this section,
save for the deferred unproved step
\cref{prop:minor-congestion}.
\begin{proof}[Proof of \cref{prop:minor-lambda-two}]
Combine
\cref{lem:vertex-splitting,cor:minor-uniform-lambda-two}.
\end{proof}

\subsection{A Proof of the Congestion Bound}
\label{subsec:minor-congestion}

We will make use of the following variant of a definition
by Korhonen and Lokshtanov~\cite[Section~4.2]{korhonen2024induced}
(see also \cite[Section 4]{le2026separatorminorfreegraphsflow}).
\begin{definition}
An \defn{almost-embedding} of a graph $H$ into a graph $G$
maps each vertex $x\in V(H)$ to a vertex $\phi(x)\in V(G)$
and each edge $e=xy\in E(H)$ to a $\phi(x)$--$\phi(y)$ path $\phi(e)$ in $G$,
such that if $e$ and $f$ have no common endpoint,
then $\phi(e)$ and $\phi(f)$ are vertex-disjoint.
\end{definition}

Korhonen and Lokshtanov essentially prove a stronger form of the following result.

\begin{lemma}[{\cite[proofs of Lemmas~4.3 and~4.4]{korhonen2024induced}}]
\label{lem:clique-almost-embedding}
For every $h\geq3$,
there is a graph $H$ of minimum degree at least two
and maximum degree at most three with $|E(H)|\leq3h^2$
such that every graph into which $H$ almost-embeds contains $K_h$ as a minor.
\end{lemma}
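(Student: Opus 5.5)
We prove the statement with an explicit ``grid-like'' subdivision of $K_h$ in which every vertex of $K_h$ is replaced by a path, and then check the minor directly from the definition of an almost-embedding. The construction of $H$ is as follows. For each $i\in[h]$, let $P_i$ be a path on the $h-1$ vertices $(i,j)$, $j\in[h]\setminus\{i\}$, taken in any fixed order. Then, for every pair $\{i,j\}$, add one \emph{cross edge} joining $(i,j)$ and $(j,i)$. The cross edges form a perfect matching, so every vertex has degree one more than its degree in its path. Since $h\geq3$, each $P_i$ has at least two vertices, so every vertex has degree $2$ or $3$. Counting edges gives $|E(H)|=h(h-2)+\binom h2\leq3h^2$.

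Now suppose $\phi$ is an almost-embedding of $H$ into $G$. Define the branch set $B_i:=\bigcup_{e\in E(P_i)}V(\phi(e))$. This set is connected, because consecutive edges of $P_i$ share an endpoint and so their image paths share the image of that endpoint. It also contains $\phi((i,j))$ for every $j\neq i$. For $i\neq j$, no edge of $P_i$ shares an endpoint with an edge of $P_j$, so $B_i\cap B_j=\emptyset$. Similarly, the cross edge $c_{ij}=(i,j)(j,i)$ shares no endpoint with any edge of $P_k$ for $k\notin\{i,j\}$, so $\phi(c_{ij})$ avoids every $B_k$ with $k\neq i,j$. Distinct cross edges also share no endpoints, so their image paths are pairwise vertex-disjoint.

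To produce the minor, fix a pair $\{i,j\}$. The path $\phi(c_{ij})$ starts in $B_i$ and ends in $B_j$, so it contains a subpath $Q_{ij}$ whose first vertex is in $B_i$, whose last vertex is in $B_j$, and whose interior avoids $B_i\cup B_j$. By the previous paragraph, this interior in fact avoids every branch set. Enlarge $B_i$ by adding the interior of $Q_{ij}$ for every $j>i$. The enlarged sets stay connected, because each added interior is attached to $B_i$. They stay pairwise disjoint, because the $Q_{ij}$ lie on pairwise disjoint paths and their interiors avoid all the original branch sets. Finally, $B_i$ and $B_j$ become adjacent through the last edge of $Q_{ij}$; if $Q_{ij}$ is a single edge, that edge already joins them. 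This gives $h$ disjoint, connected, pairwise adjacent vertex sets, hence a $K_h$ minor.

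The only step needing care is the disjointness bookkeeping: edges of $H$ that \emph{do} share an endpoint may have intersecting images. In particular, $\phi(c_{ij})$ may wander back through $B_i$ or $B_j$, and the path edges incident to $(i,j)$ may overlap each other. Trimming to $Q_{ij}$ and using only non-incidence relations handles both problems. No other obstacle is expected.
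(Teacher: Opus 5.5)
Your proof is correct. The path gadgets give degrees in $\{2,3\}$ and $|E(H)|=\tfrac32h^2-\tfrac52h$. The sets $B_i$ are connected, and they are pairwise disjoint by non-incidence alone. Each $\phi(c_{ij})$ misses the other gadgets' sets and the other cross paths. Trimming $\phi(c_{ij})$ to a subpath whose interior avoids $B_i\cup B_j$ makes the enlargement step clean.

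The paper gives no argument of its own for this lemma. It imports it from the proofs of Lemmas~4.3 and~4.4 of Korhonen and Lokshtanov, who, as the paper notes, essentially prove a stronger statement for a variant of the almost-embedding notion.

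Your route is a direct, self-contained specialization to the clique. Because the target is $K_h$, unintended adjacencies between branch sets are harmless. So the only property of $\phi$ you use is vertex-disjointness of the images of non-incident edges. That is exactly what the Lov\'asz local lemma step in the proof of the congestion bound certifies.

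What you gain is an explicit $H$ with explicit constants, and a verification that can be checked line by line without consulting the cited machinery. What the citation buys is generality, e.g.\ arbitrary pattern graphs or induced minors, where adjacencies between branch sets of non-adjacent vertices must also be controlled. None of that generality is needed for the application here.
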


\begin{proof}[Proof of \cref{prop:minor-congestion}]
Let $G=(V,E)$ be a $K_h$-minor-free graph,
let $n=|V|\geq2$,
and let $F$ be a unit $K_V$-flow in $G$.
For simplicity, we will treat $F$ as a pairwise undirected multicommodity flow.
This preserves the value of $\con_2(F)$ up to a factor of $2$.

The claim is vacuous for $h\leq2$,
so assume that $h\geq3$.
Let $H$ be given by \cref{lem:clique-almost-embedding},
and write $m_H=|E(H)|\leq3h^2$.

Each vertex is an endpoint of $n-1$ commodities,
and hence
\[
\con_2(F)
\geq
\left(\sum_{v\in V}(n-1)^2\right)^{1/2}
=
\sqrt{n}(n-1).
\]
Let $C$ be a large constant that we will choose later.
Since $n\geq2$,
if $n\leq Ch^2$ then
\[
\con_2(F)
\geq
\frac12n^{3/2}
\geq
\frac1{2\sqrt C}\frac{n^2}{h}.
\]
It therefore remains to consider $n>Ch^2$.

Fix a path decomposition of each $F^{uv}$.
We will construct a random ``embedding'' $\phi$ of $H$ into $G$.
Independently map every vertex $x\in V(H)$ to a uniformly random vertex
$\phi(x)\in V$.
For each edge $e=xy\in E(H)$,
independently sample a path according to its weight in the decomposition of
$F^{\phi(x)\phi(y)}$ when $\phi(x)\neq\phi(y)$,
and take the one-vertex path when $\phi(x)=\phi(y)$.
Denote the resulting path by $\phi(e)$.

For $v\in V(G)$,
the probability that the path sampled for any fixed edge of $H$ contains $v$ is
\[
q(v)
=
\frac{2c_F(v)+1}{n^2}.
\]
Consequently,
\[
\|q\|_2
\leq
\frac{2\con_2(F)}{n^2}
+
\frac1{n^{3/2}}.
\]
If $e,f\in E(H)$ have no common endpoint,
then $\phi(e)$ and $\phi(f)$ are independent.
Thus
\[
\Pr[\phi(e)\cap\phi(f)\neq\emptyset]
\leq
\sum_{v\in V(G)}q(v)^2
=
\|q\|_2^2.
\]

Suppose for a contradiction that
\[
\con_2(F)
<
\varepsilon\frac{n^2}{h},
\]
for a small constant $\varepsilon$ that we will choose shortly.
Then
\[
\Pr[\phi(e)\cap\phi(f)\neq\emptyset]
\leq
\left(
\frac{2\varepsilon}{h}
+
\frac1{n^{3/2}}
\right)^2.
\]

The event $\phi(e)\cap\phi(f)\neq\emptyset$
depends only on the images of the endpoints of $e$ and $f$
and on the paths sampled for $e$ and $f$.
Since $H$ has maximum degree at most three,
at most $12$ edges of $H$ are incident to an endpoint of $e$ or $f$.
An intersection event for different pair of nonincident edges is independent
of this event unless one of its edges is among these $12$ edges.
For such an event,
there are at most $m_H$ choices for its second edge,
so the event $\phi(e)\cap\phi(f)\neq\emptyset$
depends on at most $12m_H$ other intersection events.

Take $C=100$ and $\varepsilon=1/100$.
Since $n>Ch^2$,
$h\geq3$,
and $m_H\leq3h^2$,
\[
\begin{aligned}
\mathrm e(12m_H+1)
\left(
\frac{2\varepsilon}{h}
+
\frac1{n^{3/2}}
\right)^2
&\leq
37\mathrm e h^2
\left(
\frac1{50h}
+
\frac1{1000h^3}
\right)^2\\
&\leq
37\mathrm e
\left(
\frac1{50}
+
\frac1{9000}
\right)^2\\
&<
1.
\end{aligned}
\]
The Lov\'asz local lemma therefore proves
that $\phi$ is an almost-embedding of $H$ into $G$ with non-zero probability.
By \cref{lem:clique-almost-embedding},
$K_h$ is a minor of $G$,
a contradiction.
\end{proof}

\section{Eigenvalue Relationships}

In this section,
we prove that bounds on the reweighted eigenvalue $\lambda_k^*(G,\pi)$ are sufficient
to prove bounds on several other kinds of eigenvalues.

First, the following relationship between $\lambda_k^*(G)$,
$\lambda_k(L_G)$, and $\lambda_k(\mathcal L_G)$ is
well-understood (see e.g.~\cite{kwok2022cheeger,amini2018transfer}).

\begin{proposition}
\label{prop:standard-reweighted-relationships}
Let $G$ be a connected $n$-vertex graph with $n\geq2$ and maximum degree $\Delta$.
For every $1\leq k\leq n$,
\[
\lambda_k(\mathcal L_G)
\leq
\lambda_k(L_G)
\leq
\Delta\lambda_k^*(G).
\]
\end{proposition}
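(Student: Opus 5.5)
The plan is to prove the two inequalities separately, each with a direct Courant--Fischer or feasibility argument. Neither needs any of the heavier machinery of the paper.

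\emph{First inequality, $\lambda_k(\mathcal L_G)\le\lambda_k(L_G)$.} I would write the Rayleigh quotient of $\mathcal L_G$ in the variable $y=D_G^{-1/2}x$:
\[
\frac{x^\top\mathcal L_Gx}{x^\top x}=\frac{y^\top L_Gy}{y^\top D_Gy}.
\]
The map $x\mapsto D_G^{-1/2}x$ is a linear bijection of $\RR^V$, so it sends $k$-dimensional subspaces to $k$-dimensional subspaces. Hence, by Courant--Fischer,
\[
\lambda_k(\mathcal L_G)=\min_{\dim U=k}\ \max_{0\neq y\in U}\ \frac{y^\top L_Gy}{y^\top D_Gy}.
\]
Since $G$ is connected with $n\ge2$, every degree is at least $1$, so $y^\top D_Gy\ge y^\top y$. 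The numerator $y^\top L_Gy$ is nonnegative. Therefore each quotient above is at most $y^\top L_Gy/y^\top y$. Taking the max over $U$ and then the min over $U$ gives $\lambda_k(\mathcal L_G)\le\lambda_k(L_G)$.

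\emph{Second inequality, $\lambda_k(L_G)\le\Delta\lambda_k^*(G)$.} I would use the weighted-Laplacian form of \cref{obs:reweighted-weighted-laplacian} with uniform $\pi\equiv1/n$, so that $\Pi=\frac1nI$. Take the constant weighting $w_e=\frac1{n\Delta}$ for every $e\in E$. It is feasible, because each vertex $v$ satisfies
\[
\sum_{e\ni v}w_e=\frac{\deg v}{n\Delta}\le\frac1n=\pi(v).
\]
For this weighting,
\[
\Pi^{-1/2}L_w\Pi^{-1/2}=n\cdot\frac{1}{n\Delta}L_G=\frac1\Delta L_G.
\]
Since $\lambda_k^*(G)$ is the maximum over feasible $w$, we get $\lambda_k^*(G)\ge\lambda_k(L_G)/\Delta$, which is the claim.

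Neither step is a real obstacle. The only points needing care are that connectivity guarantees $D_G\succeq I$, which makes $\mathcal L_G$ well defined and justifies the denominator comparison, and that the eigenvalue ordering is preserved under the positive scaling $\frac1\Delta$.
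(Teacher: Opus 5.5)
Your proposal is correct and follows essentially the same argument as the paper. For the second inequality you both use the admissible constant weighting $w_e=1/(n\Delta)$. For the first you both use the substitution $y=D_G^{-1/2}x$ in Courant--Fischer and then compare denominators via $D_G\succeq I$.
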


\begin{proof}
For the uniform probability distribution on $V(G)$,
the edge weighting $w_e=1/(\Delta n)$ is admissible.
Its $k$th eigenvalue is $\lambda_k(L_G)/\Delta$,
so $\lambda_k(L_G)\leq\Delta\lambda_k^*(G)$.

Let $D$ be the degree matrix of $G$.
By the Courant-Fischer theorem,
\[
\begin{aligned}
\lambda_k(\mathcal L_G)
&=
\min_{\substack{U\subset\RR^V\\\dim U=k}}
\max_{\substack{z\in U\\z\neq0}}
\frac{
  z^\top D^{-1/2}L_GD^{-1/2}z
}{
  z^\top z
}\\
&=
\min_{\substack{U\subset\RR^V\\\dim U=k}}
\max_{\substack{x\in U\\x\neq0}}
\frac{
  \sum_{uv\in E}(x_u-x_v)^2
}{
  \sum_{v\in V}\deg(v)x_v^2
}\\
&\leq
\min_{\substack{U\subset\RR^V\\\dim U=k}}
\max_{\substack{x\in U\\x\neq0}}
\frac{
  \sum_{uv\in E}(x_u-x_v)^2
}{
  \sum_{v\in V}x_v^2
}
=
\lambda_k(L_G).
\end{aligned}
\]
The second equality substitutes $x:=D^{-1/2}z$.
\end{proof}

The remainder of this section will be dedicated to showing that
our bounds on the reweighted eigenvalues $\lambda_k^*(G,\pi)$
imply bounds on Steklov eigenvalues $\sigma_k(G,B)$.

\begin{lemma}
\label{lem:steklov-schur-complement}
Let $G=(V,E)$ be connected,
and let $B\subset V$ be nonempty.
Let $\Omega=V\setminus B$.

Order the vertices as $B,\Omega$ and write
\[
L_G
=
\begin{pmatrix}
L_{BB} & L_{B\Omega}\\
L_{\Omega B} & L_{\Omega\Omega}
\end{pmatrix},
\qquad
\Lambda_{G,B}
:=
L_{BB}
-L_{B\Omega}L_{\Omega\Omega}^{-1}L_{\Omega B}.
\]
For $y\in\RR^B$,
set $Y_B=y$ and
$Y_\Omega=-L_{\Omega\Omega}^{-1}L_{\Omega B}y$.
Then $(L_GY)_\Omega=0$,
$\sigma_k(G,B)$ is the $k$th-smallest eigenvalue of $\Lambda_{G,B}$,
and
\[
\sum_{uv\in E}(Y_u-Y_v)^2
=
y^\top\Lambda_{G,B}y.
\]
\end{lemma}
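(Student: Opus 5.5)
The plan is to verify the three claims in order. Each is essentially linear algebra on the block form of $L_G$.

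\textbf{Invertibility of $L_{\Omega\Omega}$.} This has to come first so that $\Lambda_{G,B}$ and $Y_\Omega$ are well defined. $L_{\Omega\Omega}$ is symmetric and positive semidefinite, since it is a principal submatrix of $L_G$. Suppose $z^\top L_{\Omega\Omega}z=0$ for some $z\in\RR^\Omega$. Extend $z$ by zero on $B$; then $\sum_{uv\in E}(z_u-z_v)^2=0$. Because $G$ is connected, $z$ is constant on $V$, and since $B\neq\emptyset$ that constant is $0$. Hence $L_{\Omega\Omega}$ is positive definite. (If $\Omega=\emptyset$, everything is trivial with $\Lambda_{G,B}=L_G$.)

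\textbf{Harmonic extension.} The identity $(L_GY)_\Omega=L_{\Omega B}y+L_{\Omega\Omega}Y_\Omega=0$ is immediate from the definition of $Y_\Omega$. Similarly, $(L_GY)_B=L_{BB}y+L_{B\Omega}Y_\Omega=\Lambda_{G,B}y$.

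\textbf{Energy identity.} Write $\sum_{uv\in E}(Y_u-Y_v)^2=Y^\top L_GY=y^\top(L_GY)_B+Y_\Omega^\top(L_GY)_\Omega$. The second term vanishes, so this equals $y^\top\Lambda_{G,B}y$.

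\textbf{Steklov eigenvalues.} Note that $\sum_{v:uv\in E}(f(u)-f(v))=(L_Gf)_u$. So $\sigma$ is a Steklov eigenvalue with eigenfunction $f$ exactly when $(L_Gf)_\Omega=0$ and $(L_Gf)_B=\sigma f_B$.
\begin{itemize}
\item Given such an $f$, the first condition forces $f_\Omega=-L_{\Omega\Omega}^{-1}L_{\Omega B}f_B$, so $f$ is the extension $Y$ of $y=f_B$. Here $y\neq0$, since otherwise $f=0$. The second condition then reads $\Lambda_{G,B}y=\sigma y$.
\item Conversely, every eigenpair $(\sigma,y)$ of $\Lambda_{G,B}$ gives a Steklov eigenfunction $Y$.
\end{itemize}
The map $y\mapsto Y$ is linear and injective, so this is a dimension-preserving bijection between the eigenspaces for each $\sigma$. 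The spectra with multiplicity therefore agree. $\Lambda_{G,B}$ is symmetric of size $|B|$, which matches the count of $|B|$ Steklov eigenvalues. Taking the $k$th smallest on both sides gives $\sigma_k(G,B)=\lambda_k(\Lambda_{G,B})$.

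The only point needing care is exactly this multiplicity bookkeeping, i.e.\ that the definition's ``with multiplicity'' refers to dimensions of these eigenfunction spaces. I will state that interpretation explicitly.
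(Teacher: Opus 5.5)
Your proposal is correct and follows the paper's proof essentially step for step. Both prove invertibility of $L_{\Omega\Omega}$ by extending a null vector by zero and using connectivity, compute the blocks $(L_GY)_\Omega$ and $(L_GY)_B$, and derive the energy identity from $Y^\top L_GY=y^\top(L_GY)_B+Y_\Omega^\top(L_GY)_\Omega$. Your extra remarks are bookkeeping the paper leaves implicit in its phrase ``these are equivalent'': that every Steklov eigenfunction must be the harmonic extension of its boundary values, and that $y\mapsto Y$ preserves eigenspace dimensions.
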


\begin{proof}
If $z\in\ker L_{\Omega\Omega}$,
extend $z$ to $Z\in\RR^V$ by setting $Z_B=0$.
Then
\[
0
=
z^\top L_{\Omega\Omega}z
=
Z^\top L_GZ
=
\sum_{uv\in E}(Z_u-Z_v)^2.
\]
Thus, $Z$ is constant because $G$ is connected,
and $Z_B=0$ implies $z=0$.
Hence, $L_{\Omega\Omega}$ is invertible.

The definition of $Y_\Omega$ gives
\[
\begin{aligned}
(L_GY)_\Omega
&=
L_{\Omega B}y+L_{\Omega\Omega}Y_\Omega
=0,\\
(L_GY)_B
&=
L_{BB}y+L_{B\Omega}Y_\Omega
=
\Lambda_{G,B}y.
\end{aligned}
\]
For nonzero $Y$,
\cref{def:steklov-eigenvalues} requires
\[
(L_GY)_\Omega=0
\qquad\text{and}\qquad
(L_GY)_B=\sigma y.
\]
By our derivation, these are equivalent to requiring
$\Lambda_{G,B}y=\sigma y$, proving the claim.
Finally,
since $(L_GY)_\Omega=0$ and $Y_B=y$,
\[
\sum_{uv\in E}(Y_u-Y_v)^2
=
Y^\top L_GY
=
Y_B^\top(L_GY)_B+Y_\Omega^\top(L_GY)_\Omega
=
y^\top(L_GY)_B
=
y^\top\Lambda_{G,B}y.
\]
\end{proof}

\begin{lemma}
\label{lem:graph-family-closure}
For every $g\geq0$ and every graph $H$ with minimum degree at least $2$,
the families of graphs of orientable genus at most $g$ and of $H$-minor-free
graphs are closed under attaching leaves.
\end{lemma}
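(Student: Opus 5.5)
It suffices to treat attaching a single leaf, since attaching many leaves is an iteration of this operation. So let $G$ be a graph and let $G'$ be obtained by adding a new vertex $z$ together with one edge $uz$, where $u\in V(G)$. The two families are handled separately.

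\emph{Genus.} Suppose $G$ embeds in an orientable surface $\Sigma$ of genus at most $g$. Choose a small disk neighbourhood $D$ of $u$ in the embedding. Inside $D$, pick a point in one of the angular sectors between consecutive edges at $u$, or any point of $D\setminus\{u\}$ if $u$ is isolated. Place $z$ at that point and draw $uz$ as a short arc inside that sector. The arc meets no other edge, so this is an embedding of $G'$ in $\Sigma$. In the rotation-system and Euler-characteristic language used in the formalization, the same step reads as follows. Inserting $z$ into the rotation at $u$ adds one vertex and one edge. It also lengthens one face boundary by two traversals of $uz$ and creates no new face. Hence $V-E+F$ is unchanged, and the genus does not increase.

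\emph{Minors.} Suppose $H$ has minimum degree at least $2$ and $H\preceq G'$. Take a minor model of $H$ in $G'$: disjoint connected branch sets $\{B_x\}_{x\in V(H)}$, with an edge of $G'$ between $B_x$ and $B_y$ for each $xy\in E(H)$. Assume the model is minimal, meaning every branch set is inclusion-minimal subject to being a model. I would argue that no branch set contains $z$, which then shows the same model lives in $G$.

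\emph{Case 1: $z$ lies in a branch set $B_x$ together with other vertices.} Connectivity forces $u\in B_x$. Since $z$ is a leaf, every edge leaving $B_x$ can be chosen away from $z$. Thus $B_x\setminus\{z\}$ is still connected and still adjacent to the same branch sets, which contradicts minimality.

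\emph{Case 2: $B_x=\{z\}$.} Then every edge of $H$ at $x$ is realised through the single edge $zu$. So $x$ has at most one neighbour in $H$, namely the owner of $u$ if there is one. This gives $\deg_H(x)\le1$, contradicting minimum degree at least $2$.

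So $z$ belongs to no branch set, and $G$ contains $H$ as a minor. Taking contrapositives, $H$-minor-freeness is preserved. Applying this with $H=K_h$ for $h\ge3$ recovers the fact cited from Tung.

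The only delicate step is the minimality argument in Case 1. An alternative is to observe that deleting $z$ from a branch set of size at least two keeps it connected (a leaf is never a cut vertex of a connected set containing its neighbour) and removes no needed adjacency. This avoids needing to make the minimality assumption precise.
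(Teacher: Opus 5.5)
Your proposal is correct and follows the same route as the paper. For genus you draw each new leaf in a small neighbourhood of its attachment vertex, and for minors you delete the added leaves from an $H$-minor model, using minimum degree at least two exactly to rule out a branch set that consists of a lone leaf. Your write-up supplies the details the paper's two-line proof leaves implicit: connectivity after removing a leaf, no inter-branch edge passing through a leaf, and the Euler-characteristic bookkeeping.
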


See \cref{fig:graph-family-closure} for a visualization of this operation.

\begin{figure}[h]
\centering
\begin{tikzpicture}[
  every node/.style={font=\small,text=figureink},
  vertex/.style={circle,fill=figureink,inner sep=2.2pt},
  newvertex/.style={circle,fill=routeorange,inner sep=2.2pt},
  edge/.style={draw=figureink,line width=0.9pt},
  newedge/.style={draw=routeorange,line width=1pt},
  transform/.style={-{Stealth[length=2mm,width=1.4mm]},draw=figureink,line width=0.8pt}
]
  \begin{scope}[xshift=1.5mm]
  \node[vertex] (leftv) at (-5.25,0) {};
  \node[vertex] (lefta) at (-6.05,0.65) {};
  \node[vertex] (leftb) at (-6.1,-0.3) {};
  \node[vertex] (leftc) at (-5.35,-0.85) {};
  \draw[edge] (leftv)--(lefta);
  \draw[edge] (leftv)--(leftb);
  \draw[edge] (leftv)--(leftc);
  \node at (-5.65,-1.25) {$G$};

  \draw[transform] (-4.75,0)--(-4.1,0);

  \node[vertex] (rightv) at (-2.75,0) {};
  \node[vertex] (righta) at (-3.55,0.65) {};
  \node[vertex] (rightb) at (-3.6,-0.3) {};
  \node[vertex] (rightc) at (-2.85,-0.85) {};
  \node[newvertex] (leafone) at (-1.85,0.7) {};
  \node[newvertex] (leaftwo) at (-1.65,0) {};
  \node[newvertex] (leafthree) at (-1.85,-0.7) {};
  \draw[edge] (rightv)--(righta);
  \draw[edge] (rightv)--(rightb);
  \draw[edge] (rightv)--(rightc);
  \draw[newedge] (rightv)--(leafone);
  \draw[newedge] (rightv)--(leaftwo);
  \draw[newedge] (rightv)--(leafthree);
  \node at (-2.65,-1.25) {$G'$};
  \end{scope}
\end{tikzpicture}
\caption{Attaching leaves.}
\label{fig:graph-family-closure}
\end{figure}

\begin{proof}
For the first family,
start with a surface embedding,
and draw the new leaves near their incident vertices.

For the second family,
deleting the added leaves from any $H$-minor model gives an $H$-minor model
in the original graph because $H$ has minimum degree at least two.
\end{proof}

\begin{theorem}
\label{thm:reweighted-to-steklov}
Fix $k\geq2$ and $A>0$.
Let $\mathcal G$ be a family of graphs closed under attaching leaves.
Suppose every $G\in\mathcal G$ with $|V(G)|\geq k$ satisfies
\[
\lambda_k^*(G)
\lesssim
\frac{A}{|V(G)|}.
\]
Then every connected $H\in\mathcal G$ and every $B\subset V(H)$ with $|B|\geq k$
satisfy
\[
\sigma_k(H,B)
\lesssim
\frac{A\Delta(H)}{|B|}.
\]
\end{theorem}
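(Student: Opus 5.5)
Following the sketch in the technical overview, I will attach many leaves to every boundary vertex and compare the Laplacian quadratic form of the enlarged graph with the Steklov Dirichlet-to-Neumann form of \cref{lem:steklov-schur-complement}.

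\textbf{Construction.} Fix connected $H\in\mathcal G$ with $n=|V(H)|$, $\Delta=\Delta(H)$, boundary $B$ with $|B|=b\ge k$, and $\Omega=V(H)\setminus B$. Pick an integer $M\ge 1$, to be sent to infinity or chosen as $M\gtrsim n/b$. Form $G'$ by attaching $M$ leaves to every $u\in B$. By closure, $G'\in\mathcal G$, and $N:=|V(G')|=n+Mb\ge k$. Set $C_u=\{u\}\cup\{\text{leaves of }u\}$, so $|C_u|=M+1$.

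\textbf{Step 1: an admissible weighting of $G'$.} The edges of $H$ get weight $1/(2\Delta N)$, and each leaf edge gets weight $1/(2N)$. Every vertex then has weighted degree at most $1/(2N)+M/(2N)$, which is too large at $u\in B$. I will therefore put $\alpha/N$ on leaf edges with $\alpha=1/(2M)$, so that each $u\in B$ has total weight at most $1/(2N)+1/(2N)=1/N$. Then $\lambda_k^*(G')\ge\lambda_k(N L_{\widetilde w})$. By Courant--Fischer and the hypothesis, there is a $k$-dimensional subspace $U\subset\RR^{V(G')}$ on which
\[
\frac1{2\Delta}\sum_{ab\in E(H)}(Y_a-Y_b)^2+\frac1{2M}\sum_{u\in B}\sum_{z\in C_u\setminus\{u\}}(Y_z-Y_u)^2\le \frac{cA}{N}\sum_{z}Y_z^2 .
\]

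\textbf{Step 2: pass to a $k$-dimensional space of boundary data.} For $Y\in U$, let $y_u$ be the mean of $Y$ on $C_u$ for $u\in B$. A variance decomposition as in \cref{lem:vertex-splitting} gives
\[
\sum_{u\in B}\sum_{z\in C_u}(Y_z-y_u)^2\lesssim\sum_{z\in C_u\setminus\{u\}}(Y_z-Y_u)^2,
\]
and likewise $(M+1)(y_u-Y_u)^2\le\sum_{z}(Y_z-Y_u)^2$. The weak leaf weight $1/(2M)$ is what makes the leaf term costly to control.

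To handle $Y$'s mass on $\Omega$, I use $N\ge Mb$ with $M\ge n/b$, so interior vertices are at most half of all vertices. The danger is a test function concentrated on $\Omega$. I plan to avoid this by replacing $U$ with the minimizing subspace over functions harmonic on $\Omega$ and constant on each $C_u$. Concretely, I restrict to test functions $Y$ with $Y$ constant on each $C_u$, equal to $y_u$, and harmonically extended on $\Omega$, and I apply Courant--Fischer for $\lambda_k(\Pi^{-1/2}L\Pi^{-1/2})$ with weights $\pi$. This is cleaner: $\lambda_k^*(G')$ upper-bounds the Rayleigh quotient over any $k$-dimensional space, but I need the reverse, producing a good space for $\Lambda_{H,B}$ from $U$. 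So I keep $U$, and the map $Y\mapsto y$ must be injective on $U$. Injectivity follows once $\sum_{u}(M+1)y_u^2\ge\frac14\sum_z Y_z^2$. Mass on $\Omega$ is bounded through the $H$-edge energy plus boundary values, via a Poincaré-type inequality, and that is exactly where $N\gtrsim n$ and small $A/N$ must be used. The trivial case $A\Delta/b\gtrsim 1$ is handled by $\sigma_k\le 2\Delta$ via the Gershgorin bound on $\Lambda_{H,B}\preceq L_{BB}$.

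\textbf{Step 3: conclude.} By \cref{lem:steklov-schur-complement}, harmonic extension minimizes energy, so
\[
y^\top\Lambda_{H,B}y\le\sum_{E(H)}(Y_a-Y_b)^2 + O\Bigl(\sum_{u\in B}\deg(u)(Y_u-y_u)^2\Bigr).
\]
Both terms are at most $\Delta\cdot O(A/N)\sum_z Y_z^2\lesssim \frac{\Delta A}{N}(M+1)\sum_u y_u^2$. With $N\approx Mb$, this yields $\sigma_k\lesssim A\Delta/b$.

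The main obstacle is Step 2, namely controlling the $\ell_2$ mass of $Y$ on $\Omega$ and the boundary-versus-leaf discrepancies with only leaf weight $\sim 1/M$. If this fails, I will take leaf weights of order $1/N$ per leaf but give the edges of $H$ weight scaled by $1/(\Delta N)$, and take $M\to\infty$ so that the interior mass becomes negligible in the denominator $\sum_z Y_z^2$.
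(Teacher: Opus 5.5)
Your construction and weighting are the paper's (your $M$ is its $r$), and Step~3 goes through \emph{given} the norm comparison $\sum_z Y_z^2\lesssim(M+1)\sum_{u\in B}y_u^2$ for all $Y\in U$. That comparison is the entire difficulty, and Step~2 neither proves it nor finds a regime in which it holds.

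\textbf{Why the proposed scale fails.} Counting vertices (interior vertices are at most half of $V(G')$) says nothing about $\ell_2$ mass, and $N\gtrsim n$ is genuinely insufficient. The only information about $U$ that Step~2 uses is that its Rayleigh quotient for $NL_{\widetilde w}$ is at most $cA/N$. Now suppose $H$ has a path of $\Theta(n)$ interior vertices hanging off one vertex of $B$. Take the span of the first $k$ Dirichlet modes on that path, extended by zero to $B$, the leaves, and the rest of $H$. Its Rayleigh quotient is $O(k^2/(\Delta n^2))$, which is below $cA/N$ when $N\sim n$ and $n$ is large. On that space $Y\mapsto y$ is identically zero, so no Poincar\'e argument at scale $N\sim n$ can give injectivity.

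Your fallback does not repair this. Leaf weights of order $1/N$ each are inadmissible at $u\in B$ (they sum to $M/N$), as you noted yourself. Taking $M\to\infty$ does not by itself make interior mass negligible, since a test vector can concentrate on a few vertices. Separately, the trivial case must be $A/b\gtrsim1$, not $A\Delta/b\gtrsim1$: the bound $\sigma_k\le2\Delta$ gives $\sigma_k\lesssim A\Delta/b$ only when $A\gtrsim b$.

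\textbf{The missing interior estimate.} What is missing is a quantitative interior estimate, and it forces $M$ of order $\Delta n^2$. Let $y:=Y_B$, let $\widehat y$ be its harmonic extension from \cref{lem:steklov-schur-complement}, and write $Y|_{V(H)}=\widehat y+Z$ with $Z_B=0$. Three facts hold:
\begin{itemize}
\item $\mathcal E_H:=\sum_{ab\in E(H)}(Y_a-Y_b)^2=y^\top\Lambda_{H,B}y+\sum_{ab\in E(H)}(Z_a-Z_b)^2$;
\item the maximum principle gives $\|\widehat y\|^2\le n\|y\|^2$;
\item telescoping along paths to $B$ gives $\|Z\|^2\le n^2\sum_{ab\in E(H)}(Z_a-Z_b)^2\le n^2\mathcal E_H$.
\end{itemize}
The leaves carry at most $2M\|y\|^2+2\mathcal E_L$, where $\mathcal E_L$ is the leaf energy. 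Hence $\|Y\|^2\le4M\|y\|^2+2n^2\mathcal E_H+2\mathcal E_L$ for $M\ge n$.

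On $U$ you have $\mathcal E_H\le2\Delta\mu\|Y\|^2$ and $\mathcal E_L\le2M\mu\|Y\|^2$ with $\mu\le cA/N\le cA/(Mb)$. So if $A/b\le 1/(16c)$ and $M\ge\Delta n^2$, the last two terms total at most $\frac12\|Y\|^2$, giving $\|Y\|^2\le8M\|y\|^2$. Then $Y\mapsto Y_B$ is injective on $U$, and $y^\top\Lambda_{H,B}y\le\mathcal E_H\le16\Delta M\mu\|y\|^2\lesssim(A\Delta/b)\|y\|^2$. This completes your min--max route; using $Y_B$ rather than leaf means also removes the correction term in Step~3.

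\textbf{How the paper does it.} The paper uses exactly these three estimates with $r=\Delta n^2$, but in the dual max--min direction. On the codimension-$(k-1)$ subspace where $X_B$ is orthogonal to the first $k-1$ eigenvectors of $\Lambda_{H,B}$, it shows $X^\top NL_wX\ge\frac{\sigma_k}{8\Delta r}\|X\|^2$, absorbing the error terms via $\sigma_k\le2\Delta$. It therefore needs neither an injectivity step nor a case split.
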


\begin{proof}
Let $H\in\mathcal G$, and denote $n=|V(H)|$, $\Delta=\Delta(H)$,
and
$\Omega=V(H)\setminus B$.

Choose $r=\Delta n^2$,
and attach $r$ leaves to every vertex of $B$,
call the resulting graph $G_r$, and let $N:=n+|B|r$ denote its vertex count.
Then $G_r\in\mathcal G$.
Give each original edge from $H$ weight $1/(2\Delta N)$
and each new edge weight $1/(2rN)$.
Let $w$ denote this edge weighting.
At any original vertex from $H$,
the incident weights sum to at most $1/(2N)+1/(2N)=1/N$,
while at any new vertex they are exactly $1/(2rN)<1/N$,
so $w$
is feasible for the outer optimization problem formulation of $\lambda_k^*(G_r)$
given in
\cref{obs:reweighted-weighted-laplacian}.

Let $L_w$ be the weighted Laplacian of $G_r$ with these edge weights,
and let $\mu_k$ be the $k$th eigenvalue of $NL_w$.
Fix $X\in\RR^{V(G_r)}$,
write $y=X_B$,
and let $Y\in\RR^{V(H)}$ be the vector specified for $L_H$, $B$, and $y$ in
\cref{lem:steklov-schur-complement}.
Set $Z=X_{V(H)}-Y$,
so $Z_B=0$.
Using both $(L_HY)_\Omega=0$ and $Z_B=0$,
\[
Z^\top L_HY
=
Z_B^\top(L_HY)_B
+
Z_\Omega^\top(L_HY)_\Omega
=0.
\]
By the choice of the edge weights,
\[
X^\top NL_wX
=
\frac{1}{2\Delta}
X_{V(H)}^\top L_HX_{V(H)}
+
\frac{1}{2r}
\sum_{\substack{xv\in E(G_r)\setminus E(H)\\v\in B}}
(X_x-y_v)^2.
\]
For the first term,
$Z^\top L_HY=0$ and
\cref{lem:steklov-schur-complement} give
\[
\begin{aligned}
X_{V(H)}^\top L_HX_{V(H)}
&=
(Y+Z)^\top L_H(Y+Z)
\\
&=
Y^\top L_HY
+
2Z^\top L_HY
+
Z^\top L_HZ
\\
&=
y^\top\Lambda_{H,B}y
+
Z^\top L_HZ
\\
&=
y^\top\Lambda_{H,B}y
+
\sum_{uv\in E(H)}(Z_u-Z_v)^2.
\end{aligned}
\]
Combining these identities gives
\[
X^\top NL_wX
=
\frac{1}{2\Delta}
\left(
y^\top\Lambda_{H,B}y
+
\sum_{uv\in E(H)}(Z_u-Z_v)^2
\right)
+
\frac{1}{2r}
\sum_{\substack{xv\in E(G_r)\setminus E(H)\\v\in B}}
(X_x-y_v)^2.
\]
For every $u\in\Omega$,
$(L_HY)_u=0$ means that $Y_u$ is the average of its neighbors' values.
If a maximum of $Y$ occurs in $\Omega$,
then every neighbor of a maximizing vertex has the same value,
so a path to $B$ shows that the maximum also occurs in $B$.
The same holds for the minimum.
Thus, $|Y_u|\leq\max_{v\in B}|y_v|$ for every $u\in V(H)$,
and
\[
\|Y\|^2
\leq
n\max_{v\in B}|y_v|^2
\leq
n\sum_{v\in B}y_v^2
=
n\|y\|^2.
\]
For each $u\in\Omega$,
choose a path $P_u$ from $u$ to $B$ with at most $n$ edges,
and orient it toward $B$.
Since $Z_B=0$,
telescoping gives $Z_u=\sum_{ab\in E(P_u)}(Z_a-Z_b)$.
Applying Cauchy-Schwarz,
we get
\[
\begin{aligned}
\|Z\|^2
&=
\sum_{u\in\Omega}Z_u^2\\
&\leq
\sum_{u\in\Omega}
|E(P_u)|
\sum_{ab\in E(P_u)}(Z_a-Z_b)^2\\
&\leq
n\sum_{u\in\Omega}
\sum_{ab\in E(P_u)}(Z_a-Z_b)^2\\
&\leq
n^2\sum_{uv\in E(H)}(Z_u-Z_v)^2.
\end{aligned}
\]
Using $(a+b)^2\leq2a^2+2b^2$,
the preceding bounds,
and $r\geq n$,
\[
\begin{aligned}
\|X\|^2
&=
\|X_{V(H)}\|^2
+
\sum_{\substack{xv\in E(G_r)\setminus E(H)\\v\in B}}X_x^2\\
&=
\|Y+Z\|^2
+
\sum_{\substack{xv\in E(G_r)\setminus E(H)\\v\in B}}
\left((X_x-y_v)+y_v\right)^2\\
&\leq
2\|Y\|^2
+2\|Z\|^2
+2\sum_{\substack{xv\in E(G_r)\setminus E(H)\\v\in B}}
(X_x-y_v)^2
+2r\sum_{v\in B}y_v^2\\
&\leq
2n\|y\|^2
+2n^2\sum_{uv\in E(H)}(Z_u-Z_v)^2
+2\sum_{\substack{xv\in E(G_r)\setminus E(H)\\v\in B}}
(X_x-y_v)^2
+2r\|y\|^2\\
&\leq
4r\|y\|^2
+2n^2\sum_{uv\in E(H)}(Z_u-Z_v)^2
+2\sum_{\substack{xv\in E(G_r)\setminus E(H)\\v\in B}}
(X_x-y_v)^2.
\end{aligned}
\]

This inequality held for an arbitrary vector $X$.
Now restrict to vectors $X$ such that $y$ is orthogonal to the first $k-1$
eigenvectors of $\Lambda_{H,B}$.
This is a subspace of codimension $k-1$,
since there are $k-1$ linear constraints on $X$.
By \cref{lem:steklov-schur-complement},
$y^\top\Lambda_{H,B}y\geq\sigma_k(H,B)\|y\|^2$.
Moreover,
$\Lambda_{H,B}\preceq L_{BB}\preceq2\Delta I_B$,
so $\sigma_k(H,B)\leq2\Delta$,
and $r=\Delta n^2$.
Combining the preceding identity with the norm bound gives
\[
\begin{aligned}
\frac{\sigma_k(H,B)}{8\Delta r}\|X\|^2
&\leq
\begin{aligned}[t]
&
\frac{\sigma_k(H,B)}{2\Delta}\|y\|^2
+
\frac{\sigma_k(H,B)n^2}{4\Delta r}
\sum_{uv\in E(H)}(Z_u-Z_v)^2\\
&{}+
\frac{\sigma_k(H,B)}{4\Delta r}
\sum_{\substack{xv\in E(G_r)\setminus E(H)\\v\in B}}
(X_x-y_v)^2
\end{aligned}\\
&\leq
\begin{aligned}[t]
&
\frac{\sigma_k(H,B)}{2\Delta}\|y\|^2
+
\frac{1}{2\Delta}
\sum_{uv\in E(H)}(Z_u-Z_v)^2\\
&{}+
\frac{1}{2r}
\sum_{\substack{xv\in E(G_r)\setminus E(H)\\v\in B}}
(X_x-y_v)^2
\end{aligned}\\
&\leq
X^\top NL_wX.
\end{aligned}
\]
The Courant-Fischer theorem now gives
\[
r\mu_k
\geq
\frac{\sigma_k(H,B)}{8\Delta}.
\]
The hypothesis gives
\[
\mu_k
\leq
\lambda_k^*(G_r)
\lesssim
\frac{A}{N}.
\]
Combining these,
\[
\sigma_k(H,B)
\leq
8\Delta r\mu_k
\lesssim
\frac{A\Delta r}{N}
=
\frac{A\Delta r}{|V(H)|+r|B|}
\leq
\frac{A\Delta}{|B|}.
\]
\end{proof}

Note that
\cref{thm:other-eigenvalue-transfer}
follows from the combination of
\cref{prop:standard-reweighted-relationships},
\cref{thm:reweighted-to-steklov}
and
\cref{lem:graph-family-closure}.

\section{Acknowledgements}

Large language models were used extensively in deriving most of the results in this paper,
with varying amounts of involvement,
although they played very little role in the final text of the paper itself beyond figures.
This marks the first paper with non-trivial LLM use outside of formalization for both the authors.
We take full responsibility for the correctness of this paper.

Most notably, a proof of a more limited form of
\cref{thm:main-genus} for the second eigenvalue
was originally given essentially completely independently by ChatGPT 5.5 ``extra high'' on June 27th, 2026.
In fact, two independent and distinct proofs of this limited result were devised by different chats with this model.
The first was based on generalizing the transfer principle method used in~\cite{amini2018transfer}.
The second uses a much simpler method to lift bounds on standard eigenvalues for bounded-degree graphs to bounds on reweighted eigenvalues.
Amusingly, the second proof was devised by a chat that was originally given a prompt of the flavour
``devise a result that Jack Spalding-Jamieson would find interesting''. Jack considers that task successful.
The result for genus-$g$ graphs we present is more general, and based on a significant simplification of the second method.

The techniques for the proof of
\cref{thm:main-minor},
on the other hand, were much more human,
and based on intuition gained from attaining an
understanding of the mechanisms underlying \cite{korhonen2024induced} and \cite{bonnet2026separator},
although the details were also worked out with much aid from a large language model.

The general method for proving a result analogous to \cref{thm:reweighted-to-steklov}
was also human-devised, but the precise result we have presented
(devised by GPT 5.6 ``medium'') has slightly weaker premises
than our original intended result, at the cost of a very slightly more tedious construction.

As one may guess from the dates mentioned here compared to the original
posting date of this work,
a simplification and acceptable presentation of these results took significant work for which large language models were not helpful.

\paragraph{Independent Work}

Independently of our work, Zhan and Zhou very recently posted a preprint
proving our \cref{cor:genus-other-eigen-bounds} directly for Steklov eigenvalues
(thereby also proving the other bounds in that statement).
They also claimed to have had some LLM-assistance in devising details of their result.
We have only briefly looked at their work, but it seems they are using the more complicated
transfer principle-based method mentioned just above.
We suspect that their result for Steklov eigenvalues
could be elevated to prove \cref{thm:main-genus}
by combining it with some kind of analogue of a converse for
\cref{thm:reweighted-to-steklov}.
In fact, we have recently had another LLM claim to prove such an analogue
for a slightly larger set of operations (also allowing for edges to be copied into multiedges,
and for edges to be bisected),
although we have not checked it over since it would not improve any known bounds.

\bibliographystyle{alpha}
\bibliography{ref}

\newpage
\appendix

\section{Lower Bounds}
\label{sec:lower-bounds}

Here we quickly review lower bounds on possible eigenvalue bounds for bounded genus and minor-free graphs,
even when the maximum degree is constant,
Consequently,
all of the bounded genus eigenvalue bounds we have given
(including \cref{thm:main-genus})
and all of the minor-free eigenvalue bounds we have given
(including \cref{thm:main-minor})
are optimal up to constant factors and $O(\text{polylog}(h))$ factors, respectively.

\begin{proposition}[{\cite[Remark~2.6]{amini2018transfer}}]
\label{prop:genus-lower-bounds}
For all sufficiently large $n$ and $g$ and every $2\leq k\leq n$,
there is a bounded-degree $n$-vertex graph $G$ of orientable genus at most $g$
such that
\[
\lambda_k(\mathcal L_G)
\gtrsim
\frac{g+k}{n}.
\]
\end{proposition}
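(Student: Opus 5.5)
The plan is to give explicit constructions, treating the two summands $g/n$ and $k/n$ separately. Since $\frac{g+k}{n}\asymp\max\{\frac gn,\frac kn\}$, it suffices to show two things. First, for every $2\le k\le n$ there is a bounded-degree planar graph (hence of genus at most $g$) with $\lambda_k(\mathcal L_G)\gtrsim k/n$. Second, for every $g$ with $g\lesssim n$ there is a bounded-degree graph of genus at most $g$ with $\lambda_2(\mathcal L_G)\gtrsim g/n$. Monotonicity in $k$ then lets the second construction also serve for $\lambda_k$. In the regime $g\gtrsim n$ the target $(g+k)/n$ is $O(1)$ once we cap at a constant, and a bounded-degree expander on $n$ vertices has genus $O(n)\le g$ and $\lambda_2(\mathcal L_G)\gtrsim1$.

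For the $k/n$ term, I would take $G$ to be a disjoint union of $k$ paths (or cycles) of length about $n/k$, joined by a few edges into a single long path or cycle. This is simplest as a cycle $C_n$ itself, whose normalized Laplacian eigenvalues are $1-\cos(2\pi j/n)$. That gives $\lambda_k(\mathcal L_{C_n})\asymp (k/n)^2$, which is too small. So instead I would use a bounded-degree planar graph with better local expansion at scale $n/k$. Concretely, take $k$ disjoint copies of a bounded-degree graph $X$ on $m=n/k$ vertices with $\lambda_2(\mathcal L_X)\gtrsim 1/m$, such as the $\sqrt m\times\sqrt m$ grid (where $\lambda_2\asymp1/m$). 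Then connect them in a path by single edges. By Courant--Fischer or interlacing, removing the $k-1$ connecting edges changes $\lambda_k$ by a bounded amount only in rank-$O(k)$ fashion. So I would argue directly: any $k$-dimensional subspace contains a vector orthogonal to the $k-1$ connecting-edge constraints and to the block constant vectors minus one. Applying a Poincaré inequality on each grid block then gives Rayleigh quotient $\gtrsim 1/m=k/n$. This interlacing bookkeeping is the step I expect to be fiddly.

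For the $g/n$ term, I would follow the idea of Amini and Cohen-Steiner's remark. Take a bounded-degree expander $X$ on $g$ vertices, which has genus at most $|E(X)|\lesssim g$ (after rescaling $g$ by a constant). Then replace each vertex by a grid of side $\sqrt{n/g}$ and each edge by a short connection, the subdivision-and-blowup staying within genus $O(g)$. The resulting graph has $n$ vertices, and its $\lambda_2$ is governed by the product of the expander gap and the grid gap. Bounding the Rayleigh quotient from below uses the same variance decomposition as in the proof of Proposition~\ref{prop:degree-reduction} (averages on blocks plus a local Poincaré inequality), but run in the opposite direction. That gives $\lambda_2\gtrsim \frac{1}{n/g}\cdot\lambda_2(X)\gtrsim g/n$. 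The main obstacle is this comparison: it requires that each block's internal Poincaré constant be $O(n/g)$, which holds for grids in $\ell_2$ after accounting for $\mathcal L$ versus $L$ with bounded degree. Alternatively, since the statement is cited from \cite[Remark~2.6]{amini2018transfer}, one may simply invoke it.
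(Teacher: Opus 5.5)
\textbf{There is a genuine gap, and it is in the constructions themselves, not just the bookkeeping.} Joining two-dimensional grid blocks at single vertices (single edges or short paths) creates a logarithmic bottleneck. As a result, both of your graphs have eigenvalues smaller than you claim by a factor of $\log m$, where $m$ is the block size.

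To see this, take a block $B$ with its boundedly many attachment vertices $a$. Put $\phi_B(v)=\min\{1,\min_a\log(1+d(v,a))/\log(1+\sqrt m)\}$ on $B$ and $\phi_B=0$ off $B$.
\begin{itemize}
\item Every $\phi_B$ vanishes at all attachment vertices. So the $\phi_B$ have disjoint supports, and no edge of $G$ joins two supports.
\item Each $\phi_B$ has energy $\lesssim\sum_{d\lesssim\sqrt m}d\,\bigl((1+d)\log m\bigr)^{-2}\lesssim 1/\log m$ and mass $\gtrsim m$.
\end{itemize}
Courant--Fischer then gives $\lambda_j(\mathcal L_G)\le\lambda_j(L_G)\lesssim 1/(m\log m)$ for every $j$ up to the number of blocks. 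With $m=n/k$ this is $k/(n\log(n/k))$, and with $m=n/g$ it is $g/(n\log(n/g))$. These fall short of $k/n$ and $g/n$ by factors that are unbounded as $n$ grows.

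The obstacle you flagged, an $O(m)$ Poincar\'e constant inside each block, is true but beside the point. What fails is controlling $f$ at a single attachment vertex by the block energy, and in two dimensions that costs a factor $\log m$. Separately, your dimension count for the $k$-block case imposes about $2k-2$ constraints on a $k$-dimensional subspace, which need not leave a nonzero vector.

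\textbf{Repairing the $k/n$ term.} No blocks are needed. The $L\times L$ grid with $L=\sqrt n$ is planar, with Laplacian eigenvalues $4\sin^2(\pi a/2L)+4\sin^2(\pi b/2L)\asymp(a^2+b^2)/n$ for $0\le a,b<L$. Lattice-point counting then gives $\lambda_k(L)\asymp k/n$, and hence $\lambda_k(\mathcal L)\ge\lambda_k(L)/4\gtrsim k/n$. Alternatively, use $k-1$ blocks, so that $\lambda_k(L_G)\ge\lambda_k(L_{G'})=\lambda_2(L_{\mathrm{block}})$ by edge monotonicity, as in the proof of \cref{prop:minor-lower-bounds}.

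\textbf{Repairing the $g/n$ term.} Glue blocks along whole sides rather than at points.
\begin{itemize}
\item Thicken a cubic expander $X$ on $cg$ vertices into a ribbon surface of genus at most $(|E(X)|-|V(X)|+1)/2\le g$.
\item Place a $\sqrt m\times\sqrt m$ grid in each vertex disk.
\item For each $uv\in E(X)$, join a side of block $u$ to a side of block $v$ by $\sqrt m$ parallel edges.
\end{itemize}
The trace inequality $\sum_{a\in\text{side}}(f(a)-\bar f_B)^2\lesssim\sqrt m\,E_B(f)$ follows from one-dimensional Poincar\'e along columns plus the grid Poincar\'e inequality. It yields $(x_u-x_v)^2\lesssim E_u(f)+E_v(f)+E_{uv}(f)/\sqrt m$. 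Your variance decomposition then gives $\lambda_2(L_G)\gtrsim\lambda_2(L_X)/m\gtrsim g/n$. Discretizations of hyperbolic surfaces with a uniform spectral gap are the other standard route.

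Note that the paper gives no argument of its own here. It simply cites Amini and Cohen-Steiner's Remark~2.6, which is your stated fallback.
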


The following analogous lower bound for minor-free graphs is new (to the best of our knowledge),
but quite straightforward.

\begin{proposition}
\label{prop:minor-lower-bounds}
For every sufficiently large $h$ and every $k\geq2$,
there is a $K_h$-minor-free graph $G$ on
$n=\Theta(h^2k)$ vertices with maximum degree at most four
such that
\[
\lambda_k(L_G)\gtrsim1.
\]
\end{proposition}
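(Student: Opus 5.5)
Since every $K_h$-minor-free graph has only $O(h\sqrt{\log h}\,n)$ edges, the target $\lambda_k(L_G)\gtrsim 1$ with $n=\Theta(h^2k)$ calls for a disjoint union of $k$ expander-like pieces. Each piece should be $K_h$-minor-free with $\Theta(h^2)$ vertices and spectral gap bounded below. I would take $G_0$ to be a bounded-degree expander on $m=ch^2$ vertices, with $c$ a small constant. A $3$-regular Ramanujan-type or random cubic graph has $\lambda_2(L_{G_0})\geq\alpha>0$ for an absolute constant $\alpha$, and maximum degree at most four.

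The key step is showing $G_0$ is $K_h$-minor-free. A $K_h$ minor needs $h$ disjoint connected branch sets that are pairwise adjacent, hence at least $\binom h2$ edges between branch sets. A graph with maximum degree $3$ on $m$ vertices has at most $3m/2$ edges, and $3m/2<\binom h2$ once $c<1/3$ and $h$ is large. So no $K_h$ minor exists. This is the easy direction; the only point to check is that constant-degree expanders exist for every large size $m$, which standard random cubic graphs provide.

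Next I would let $G$ be the disjoint union of $k$ copies of $G_0$, so $n=km=\Theta(h^2k)$. The Laplacian spectrum of $G$ is the union of the copies' spectra. Its smallest $k$ eigenvalues are the $k$ zeros, so $\lambda_k(L_G)=0$, and this construction fails as stated. The paper assumes connected graphs, so I must instead link the copies. I would join the copies along a path, using one edge between consecutive copies and attaching at vertices of degree $3$, which makes the maximum degree at most $4$. Each copy has at most about $3m/2+2$ edges, but minor-freeness must be rechecked globally. A $K_h$ minor in a graph whose blocks are the copies plus bridge edges lies in a single $2$-connected block for $h\geq3$, because $K_h$ is $2$-connected. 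Each block is a copy of $G_0$ or a bridge edge, so the edge count argument still applies.

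The main obstacle is the spectral lower bound for the linked graph. Take any $k$-dimensional subspace $U$. I need some $x\in U$ with $\sum_{uv}(x_u-x_v)^2\gtrsim\|x\|^2$. The subspace of vectors constant on each copy has dimension $k$. I would choose instead a nonzero $x\in U$ orthogonal to the $k-1$ vectors $\mathbf 1_{C_i}-\mathbf 1_{C_{i+1}}$; this is possible because their span has dimension $k-1$, smaller than $\dim U=k$. Such an $x$ has copy means $\bar x_i$ all equal to a common value $t$. Then $\|x\|^2=\sum_i\|x|_{C_i}-t\mathbf 1\|^2+nt^2$. The within-copy variation is controlled by $\alpha^{-1}$ times the energy in that copy. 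The $nt^2$ term is the difficulty, since a constant vector has zero energy. To handle it, I would also impose orthogonality to $\mathbf 1$. This costs one more dimension, and I would absorb that by using $k+1$ copies and shifting indices, which changes $n$ only by a constant factor. Then $t=0$ and $\|x\|^2\leq\alpha^{-1}\cdot$energy, which gives $\lambda_k(L_G)\geq\alpha\gtrsim1$ by Courant--Fischer.
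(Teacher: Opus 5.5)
\textbf{The spectral step fails as written.} Your expander pieces, the edge-count argument for $K_h$-minor-freeness, the bridging, and the $2$-connectivity argument all match the paper. The problem is that the index shift goes in the wrong direction, and for the graph you actually build the conclusion is false.

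Suppose $G$ consists of $p$ copies $C_1,\dots,C_p$ of $G_0$ joined in a path by $p-1$ bridges. Every vector $x=\sum_i a_i\mathbf 1_{C_i}$ has energy $\sum_{i<p}(a_i-a_{i+1})^2\le 4\sum_i a_i^2$ and squared norm $m\sum_i a_i^2$. These vectors form a $p$-dimensional space, so Courant--Fischer gives $\lambda_p(L_G)\le 4/m=O(1/h^2)$. For $p=k$ (your first linked graph) or $p=k+1$ (your ``absorbing'' step), this forces $\lambda_k(L_G)=O(1/h^2)$. Adding copies adds near-constant, low-energy directions; it never buys you a dimension.

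Your own constraint count shows the right direction. The $p-1$ difference vectors together with $\mathbf 1$ span exactly $\{\mathbf 1_{C_i}\}$. So forcing all copy means to vanish is $p$ linear constraints, and a $k$-dimensional $U$ is guaranteed to contain a nonzero such $x$ only when $p\le k-1$.

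\textbf{The fix.} Use $k-1$ copies, or a single copy when $k=2$. Then $n=(k-1)m\ge km/2$ is still $\Theta(h^2k)$, and your argument gives $\lambda_k(L_G)\ge\alpha$. This is exactly the paper's proof, which says it more briefly. Let $G'$ be the disjoint union of the $k-1$ copies. Its spectrum has exactly $k-1$ zeros, so $\lambda_k(L_{G'})=\lambda_2(L_{G_0})$. Since $L_G\succeq L_{G'}$, it follows that $\lambda_k(L_G)\ge\lambda_2(L_{G_0})$.

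Your diagnosis of the disjoint union was also off. It failed with $k$ copies because of the count, not because it was disconnected. Linking keeps the graph connected and is harmless, since adding edges only increases the quadratic form, but it does not lift $\lambda_k$ above $O(1/m)$.

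\textbf{Two small points.} The two bridges at a middle copy must attach at distinct vertices, or that vertex gets degree $5$; the paper states this explicitly. Also, $m$ must be even for a cubic graph to exist. Your use of random cubic graphs in place of the paper's explicit near-Ramanujan graphs from Alon's construction is fine.
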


Notably, the dependence on $k$ is inherently worse than the bound for genus-$g$ graphs.

\begin{proof}
Fix an even $r=\Theta(h^2)$ such that
$3r/2<\binom h2$.
Taking $d=3$ and $\varepsilon=1/10$ in
\cite[Theorem~1.3]{alon2021explicit},
there is a simple $3$-regular $r$-vertex graph $H$ such that
\[
\lambda_2(L_H)
\geq
3-2\sqrt2-1/10
\gtrsim1.
\]
The graph $H$ has $3r/2<\binom h2$ edges.
Since edge deletion and contraction cannot increase
the number of edges in a simple graph,
$H$ is $K_h$-minor-free.
Join $k-1$ disjoint copies of $H$ in a path using bridges,
using distinct vertices for the bridges incident to each copy,
as shown in \cref{fig:minor-lower-bound}.
Call the resulting graph $G$.
Then
$n=(k-1)r=\Theta(h^2k)$ and $\Delta(G)\leq4$.

\begin{figure}[h]
\centering
\begin{tikzpicture}[
  every node/.style={font=\small,text=figureink},
  vertex/.style={circle,fill=figureink,inner sep=1.8pt},
  copy/.style={draw=figureink!70,fill=figurepanel!45,rounded corners=6pt,
    minimum width=2.2cm,minimum height=1.7cm},
  edge/.style={draw=figureink,line width=0.7pt},
  bridge/.style={draw=routeorange,line width=1.4pt}
]
  \foreach \x/\name in {
    0/one,
    3/two,
    6/three,
    9/four} {
    \node[copy] (\name-box) at (\x,0) {};
    \node[vertex] (\name-left) at (\x-0.75,0) {};
    \node[vertex] (\name-top) at (\x-0.15,0.5) {};
    \node[vertex] (\name-right) at (\x+0.75,0) {};
    \node[vertex] (\name-bottom) at (\x+0.15,-0.5) {};
    \node[vertex] (\name-center) at (\x,0) {};
    \draw[edge]
      (\name-left)--(\name-top)--(\name-right)--(\name-bottom)--cycle;
    \draw[edge] (\name-top)--(\name-center)--(\name-bottom);
  }
  \draw[bridge] (one-right)--(two-left);
  \draw[bridge] (two-right)--(three-left);
  \draw[bridge] (three-right)--(four-left);
\end{tikzpicture}
\caption{The construction for $k=5$.}
\label{fig:minor-lower-bound}
\end{figure}

Every bridge of $G$ separates two collections of copies of $H$.
If we were to perform vertex deletions, edge deletions, and edge contractions on $G$,
either the two sides of each bridge would remain separated,
or the contracted bridge would be a cut vertex whenever vertices remain on both sides.
Thus,
every $2$-connected minor of $G$ with at least three vertices is a minor of a single copy of $H$.
Since $K_h$ is $2$-connected and $H$ is $K_h$-minor-free,
$G$ is $K_h$-minor-free.

Let $G'$ be the disjoint union of the copies of $H$.
Then $G'$ has exactly $k-1$ zero eigenvalues and
$\lambda_k(L_{G'})=\lambda_2(L_H)$.
Moreover,
for every $x\in\RR^{V(G)}$,
\[
x^\top L_Gx
=
x^\top L_{G'}x
+\sum_{uv\in E(G)\setminus E(G')}(x_u-x_v)^2
\geq
x^\top L_{G'}x.
\]
Therefore, by Courant-Fischer,
\[
\lambda_k(L_G)
\geq
\lambda_k(L_{G'})
=
\lambda_2(L_H)
\gtrsim1.
\]
\end{proof}

\end{document}